\documentclass[12pt]{article}

\usepackage[T2A]{fontenc}
\usepackage[cp1251]{inputenc}
\usepackage[english]{babel}
\usepackage[tbtags]{amsmath}
\usepackage{amsfonts,amssymb,amsthm,cite}

\usepackage[active]{srcltx}
\usepackage{color}
\usepackage[mathscr]{eucal}
\usepackage{mathrsfs}
\usepackage[sans]{dsfont}

\usepackage{graphicx}

\usepackage[width=165mm, left=23mm, top=20mm, 
paper=a4paper]{geometry}

\numberwithin{equation}{section}






\newtheorem{theorem}{\hskip\parindent Theorem}[section]
\newtheorem{lemma}{\hskip\parindent Lemma}[section]

\newtheorem{definition}{\hskip\parindent Definition}[section]
\newtheorem{remark}{\hskip\parindent Remark}[section]
\newtheorem{example}{\hskip\parindent Example}[section]


\renewcommand{\proofname}{Proof}

\allowdisplaybreaks

\begin{document}
\begin{center}{\bf\sc	\Large 
Riemann--Liouville integrals in Besov spaces}

\smallskip
{Elena P. Ushakova \footnote{elenau@inbox.ru}}

\smallskip
\noindent\textit{\footnotesize V.A. Trapeznikov Institute of Control Sciences of 
Russian Academy of Sciences, 65 Profsoyuznaya str., Moscow 117997, Russia}

\noindent\textit{\footnotesize Steklov Mathematical Institute of Russian Academy of Sciences, 8 Gubkina str., Moscow 119991, Russia}

\noindent\textit{\footnotesize Computing Center of Far Eastern Branch of Russian Academy 
of Sciences, 65 Kim Yu Chena str., Khabarovsk 680000, Russia}
\end{center}

{\small \noindent\textit{Key words}: Riemann--Liouville operator, Besov space, Muckenhoupt weight, 
local Muckenhoupt weight, $B-$spline, Battle--Lemari\'{e} wavelet system, 
decomposition theorem}
 
{\small \textit{MSC (2010)}: 44A15,  41A15, 26A33 }

\smallskip
{\small {\bf Abstract.}  
Criteria for the fulfillment of inequalities in weighted smoothness function spaces of 
Besov type with Riemann--Liouville operators of natural orders on the real axis and 
semi--axes are found. The obtained estimates are refined under additional conditions on weights.

\section{Introduction}
Consider the left and right Riemann--Liouville operators of natural orders 
$\boldsymbol{\alpha}$ of the forms
\begin{equation}\label{leftL}
I_{c^+}^{\boldsymbol{\alpha}} f({x}):=\frac{1}{\Gamma(\boldsymbol{\alpha})}
\int_{c}^{x}{(x-y)^{\boldsymbol{\alpha}-1}}{f({y})}\,dy\qquad (x>c)
\end{equation} and
\begin{equation}\label{rightR}
I_{c^-}^{\boldsymbol{\alpha}} f({x}):=\frac{1}{\Gamma(\boldsymbol{\alpha})}
\int_{x}^{c}{(x-y)^{\boldsymbol{\alpha}-1}}{f({y})}\,dy\qquad (x<c)
\end{equation} (see e.g. \cite{SKM}). Here $c\in[-\infty,+\infty]$. 
If $c=\mp\infty$ we denote \eqref{leftL} and \eqref{rightR} by $I_{+}^{\boldsymbol{\alpha}}$ and $I_{-}^{\boldsymbol{\alpha}}$, respectively.  

It is assumed that images $I_{c^\pm}^{\boldsymbol{\alpha}}f$ and pre--images $f$ of our operators 
$I_{c^\pm}^{\boldsymbol{\alpha}}$ are identified with density functions
of regular distributions (see e.g. \cite{ST}) from spaces of Besov type $B_{pq}^{s,w}(\mathbb{R})$ with summation parameters
$p>0$ and $0<q\le\infty$, smoothness indices $s\in\mathbb{R}$ and weights $w$
of Muckenhoupt type $\mathscr{A}_\infty^{\rm loc}$ (see related definitions in Section
\ref{Bes}). By weight $w$ we mean a locally integrable almost everywhere positive function
on $\mathbb{R}$. The main problem studied in this paper is characterisation of 
inequalities connecting the norms of images and pre--images of 
$I_{c^\pm}^{\boldsymbol{\alpha}}$ in $B_{pq}^{s,w}(\mathbb{R})$.

Similar inequalities for some classes of operators acting in Besov type spaces were studied 
in \cite[Theorem 2.3.8]{Tr1},
\cite{N}, \cite[Theorem 2.20]{R}, \cite[\S\,4]{IS}, \cite[p.\,23]{Tr6}. For
$I_{c^\pm}^{\boldsymbol{\alpha}}$ the problem was considered in \cite[Theorem 2.20]{R},
\cite[Proposition 4.2]{IS}, as well as in \cite{{SMJ}, UUaa, Uam} and \cite{Ujms}. In particular, the following results were obtained in
\cite{Ujms}.

\begin{theorem}[{\cite[Theorem 5.5]{Ujms}}]\label{S5T1}
Let
$p> 1$, $0<q\le\infty$, $s\in\mathbb{R}$,
weights $u,v,w\in\mathscr{A}_\infty^{\rm loc}$,
$\boldsymbol{\alpha}\in\mathbb{N}$ and $f\in L^1(\mathbb{R})$.\\
{\rm (i)} Then
$I_{\pm}^{\boldsymbol{\alpha}}f\in {B}_{pq}^{s,u}(\mathbb{R})$
if
$f\in {B}_{pq}^{s+\kappa^*-\boldsymbol{\alpha},v}(\mathbb{R})$ with some
$\kappa^*\in\mathbb{R}$ such that \label{constanty}
\begin{equation}\label{est1}
\mathbf{C}_{\pm}^{\boldsymbol{\alpha}}(\kappa^*)=\mathbf{M}_\pm^{\boldsymbol{\alpha}}(1)+\mathbf{M}_\pm^{\boldsymbol{\alpha}}(0)+
\sup_{d\in\mathbb{N}}\Bigl[\mathscr{M}_\pm^{\boldsymbol{\alpha}}(d,\kappa^*,1)
+\mathscr{M}_\pm^{\boldsymbol{\alpha}}(d,\kappa^*,0)\Bigr]<\infty,\quad\textrm{where}\end{equation}  
\begin{gather*}
\mathbf{M}_{+}^{\theta}(\varepsilon):=\sup_{\tau\in\mathbb{Z}}
\biggl(\sum_{r\ge\tau}(r-\tau+1)^{p(\theta-1)\varepsilon}
u(Q_{0r})\biggr)^{\frac{1}{p}}\biggl(\sum_{r\le\tau} (\tau-r+1)^{p'(\theta-1)(1-\varepsilon)}
[v(Q_{0r})]^{1-p'}\biggr)^{\frac{1}{p'}},\\
\mathbf{M}_{-}^{\theta}(\varepsilon):=\sup_{\tau\in\mathbb{Z}}
\biggl(\sum_{r\le\tau}(\tau-r+1)^{p(\theta-1)\varepsilon}
u(Q_{0r})\biggr)^{\frac{1}{p}}\biggl(\sum_{r\ge\tau} (r-\tau+1)^{p'(\theta-1)(1-\varepsilon)}
[v(Q_{0r})]^{1-p'}\biggr)^{\frac{1}{p'}},\\
\mathscr{M}_{+}^{\theta}(d,\kappa,\varepsilon):={2^{-d\kappa}}\sup_{\tau\in\mathbb{Z}}
\biggl(\sum_{r\ge\tau}(r-\tau+1)^{p(2\theta-1)\varepsilon}
u(Q_{(d-1)r})\biggr)^{\frac{1}{p}}\\\times\biggl(\sum_{r\le\tau} (\tau-r+1)^{p'(2\theta-1)
(1-\varepsilon)}[v(Q_{(d-1)r})]^{1-p'}\biggr)^{\frac{1}{p'}},\\
\mathscr{M}_{-}^{\theta}(d,\kappa,\varepsilon):={2^{-d\kappa}}\sup_{\tau\in\mathbb{Z}}
\biggl(\sum_{r\le\tau}(\tau-r+1)^{p(2\theta-1)\varepsilon}
u(Q_{(d-1)r})\biggr)^{\frac{1}{p}}\\\times\biggl(\sum_{r\ge\tau} (r-\tau+1)^{p'(2\theta-1)
(1-\varepsilon)}[v(Q_{(d-1)r})]^{1-p'}\biggr)^{\frac{1}{p'}}.\end{gather*}
Moreover, 
\begin{equation}\label{4T1more+}
\|I_{\pm}^{\boldsymbol{\alpha}}f\|_{{B}_{pq}^{s,u}(\mathbb{R})}\lesssim \mathbf{C}_{\pm}^{\boldsymbol{\alpha}}(\kappa^*)\|f\|_{{B}_{pq}^{s+\kappa^*-\boldsymbol{\alpha},v}(\mathbb{R})}.
\end{equation}
{\rm (ii)} If
$I_{\pm}^{\boldsymbol{\alpha}}f\in {B}_{pq}^{s,u}(\mathbb{R})$
then
$f\in {B}_{pq}^{s-\kappa_*-\boldsymbol{\alpha},w}(\mathbb{R})$ with
$\kappa_*\in\mathbb{R}$ such that there exist $0\le\varepsilon_M,\varepsilon_\mathbb{M}\le 1$,
satisfying the condition
\begin{equation}\label{est2}\mathbb{C}_{\pm}^{\boldsymbol{\alpha}}(\kappa_*)
={M}_\pm^{\boldsymbol{\alpha}}(\varepsilon_M)+\sup_{d\in\mathbb{N}}
\mathbb{M}_\pm^{\boldsymbol{\alpha}}(d,\kappa_*,\varepsilon_\mathbb{M})<\infty,\quad\textrm{where}\end{equation} 
\begin{gather*}
{M}_{+}^{\theta}(\varepsilon):=\sup_{\tau\in\mathbb{Z}}\biggl(\sum_{r=\tau-\theta}^{\tau}
(\tau-r+1)^{-p(\theta+1)\varepsilon}
w(Q_{0r})\biggr)^{\frac{1}{p}}\biggl(\sum_{r=\tau}^{\tau+\theta} (r-\tau+1)^{-p'(\theta+1)
(1-\varepsilon)}[u(Q_{0r})]^{1-p'}\biggr)^{\frac{1}{p'}},\\
{M}_{-}^{\theta}(\varepsilon):=\sup_{\tau\in\mathbb{Z}}\biggl(\sum_{r=\tau}^{\tau+\theta}
(r-\tau+1)^{-p(\theta+1)\varepsilon}
w(Q_{0r})\biggr)^{\frac{1}{p}}\biggl(\sum_{r=\tau-\theta}^{\tau} (\tau-r+1)^{-p'(\theta+1)
(1-\varepsilon)}[u(Q_{0r})]^{1-p'}\biggr)^{\frac{1}{p'}},\\
\mathbb{M}_{+}^{\theta}(d,\kappa,\varepsilon):={2^{-d\kappa}}\sup_{\tau\in\mathbb{Z}}
\biggl(\sum_{r=\tau-\theta}^{\tau}(\tau-r+1)^{-p(2\theta+1)\varepsilon}
w(Q_{(d-1)r})\biggr)^{\frac{1}{p}}\\\times\biggl(\sum_{r=\tau}^{\tau+\theta} 
(r-\tau+1)^{-p'(2\theta+1)(1-\varepsilon)}[u(Q_{(d-1)r})]^{1-p'}\biggr)^{\frac{1}{p'}},\\
\mathbb{M}_{-}^{\theta}(d,\kappa,\varepsilon):={2^{-d\kappa}}\sup_{\tau\in\mathbb{Z}}
\biggl(\sum_{r=\tau}^{\tau+\theta}(r-\tau+1)^{-p(2\theta+1)\varepsilon}
w(Q_{(d-1)r})\biggr)^{\frac{1}{p}}\\\times\biggl(\sum_{r=\tau-\theta}^{\tau} 
(\tau-r+1)^{-p'(2\theta+1)(1-\varepsilon)}[u(Q_{(d-1)r})]^{1-p'}\biggr)^{\frac{1}{p'}}.
\end{gather*} Besides,
\begin{equation}\label{4T1more-}
\|f\|_{{B}_{pq}^{s-\kappa_*-\boldsymbol{\alpha},w}(\mathbb{R})}\lesssim \mathbb{C}_{\pm}^{\boldsymbol{\alpha}}(\kappa_*)\|I_{\pm}^{\boldsymbol{\alpha}}f\|_{{B}_{pq}^{s,u}(\mathbb{R})}.
\end{equation}
{\rm (iii)} If
$I_{\pm}^{\boldsymbol{\alpha}}f\in {B}_{pq}^{s,u}(\mathbb{R})$
then
$f\in {B}_{pq}^{s-\boldsymbol{\alpha},u}(\mathbb{R})$, and
\begin{equation}\label{est3}
\|f\|_{{B}_{pq}^{s-\boldsymbol{\alpha},u}(\mathbb{R})}\lesssim \|I_{\pm}^{\boldsymbol{\alpha}}f\|_{{B}_{pq}^{s,u}(\mathbb{R})}.
\end{equation}
\end{theorem}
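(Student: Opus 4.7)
My plan is to handle all three statements simultaneously by transcribing the Riemann--Liouville mapping problem on $B_{pq}^{s,w}(\mathbb{R})$ into weighted discrete Hardy--type inequalities on coefficient sequences, using the Battle--Lemari\'e wavelet system built from a B--spline of sufficiently high order $N>|s|+|\kappa^*|+|\kappa_*|+\boldsymbol{\alpha}$ together with the associated wavelet decomposition theorem for $B_{pq}^{s,w}$ (the ``decomposition theorem'' mentioned in the keywords). That theorem identifies
$$\|h\|_{B_{pq}^{s,w}(\mathbb{R})}\;\sim\;\|(\lambda_{0r})_r\|_{\ell^p(w(Q_{0\cdot}))}+\biggl(\sum_{d\ge 1}2^{dsq}\|(\mu_{d,r})_r\|_{\ell^p(w(Q_{(d-1)\cdot}))}^q\biggr)^{1/q},$$
where $\lambda_{0r}$ and $\mu_{d,r}$ are the scaling and wavelet coefficients of $h$. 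Under this identification, both \eqref{4T1more+} and \eqref{4T1more-} reduce to weighted norm inequalities for the matrix representations of $I_\pm^{\boldsymbol{\alpha}}$ (for part (i)) and of its distributional inverse $\pm D^{\boldsymbol{\alpha}}$ (for parts (ii)--(iii)) acting on these sequences.

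\smallskip
For part (i) I expand $f=\sum_r\lambda_{0r}\varphi_{0r}+\sum_{d\ge 1,\,r}\mu_{d,r}\psi_{d-1,r}$ and compute $I_\pm^{\boldsymbol{\alpha}}\varphi_{0r}$ and $I_\pm^{\boldsymbol{\alpha}}\psi_{d-1,r}$ directly from \eqref{leftL}--\eqref{rightR}. Since $\boldsymbol{\alpha}\in\mathbb{N}$ and the generators are compactly supported splines, each image is again piecewise polynomial with a one--sided polynomial tail of order $\boldsymbol{\alpha}-1$. Re--expansion in the basis yields matrices whose entries decay in $|\tau-r|$ like $(|\tau-r|+1)^{\boldsymbol{\alpha}-1}$ on the scaling level and like $2^{-d\boldsymbol{\alpha}}(|\tau-r|+1)^{2\boldsymbol{\alpha}-1}$ on the $d$--th wavelet level; the $\pm$ sign restricts the support to one side of the diagonal. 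The two--weight $\ell^p(v^{1-p'})\to\ell^p(u)$ boundedness of such one--sided discrete Hardy matrices is controlled by the Bradley--Muckenhoupt test with any splitting exponent $\varepsilon\in[0,1]$; the two endpoint choices $\varepsilon=0,1$ give the two summands in $\mathbf{M}_\pm^{\boldsymbol{\alpha}}$ and $\mathscr{M}_\pm^{\boldsymbol{\alpha}}$, and the factor $2^{-d\kappa^*}$ in $\mathscr{M}_\pm^{\boldsymbol{\alpha}}$ absorbs the smoothness shift $s+\kappa^*-\boldsymbol{\alpha}\to s$ between source and target. Minkowski in $\ell^q(d)$ then delivers \eqref{est1}--\eqref{4T1more+}.

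\smallskip
For part (ii) I use the distributional identity $D^{\boldsymbol{\alpha}}I_\pm^{\boldsymbol{\alpha}}f=(\pm 1)^{\boldsymbol{\alpha}}f$, which recovers the coefficients of $f$ from those of $g:=I_\pm^{\boldsymbol{\alpha}}f$ via the pairings $\langle g,D^{\boldsymbol{\alpha}}\tilde\varphi_{0\tau}\rangle$ and $\langle g,D^{\boldsymbol{\alpha}}\tilde\psi_{d-1,\tau}\rangle$. The distributional $\boldsymbol{\alpha}$--th derivative of a dual B--spline is an $\boldsymbol{\alpha}$--th order finite difference, so the recovery matrix is \emph{banded} of band width $\boldsymbol{\alpha}+1$; this is exactly the range of the finite sums $\sum_{r=\tau-\boldsymbol{\alpha}}^{\tau}$ and $\sum_{r=\tau}^{\tau+\boldsymbol{\alpha}}$ in ${M}_\pm^{\boldsymbol{\alpha}}$ and $\mathbb{M}_\pm^{\boldsymbol{\alpha}}$, with the one--sidedness dictated by the sign $\pm$. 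The dual two--weight Bradley--Muckenhoupt test on this finite band (with $w$ on the target side and $u^{1-p'}$ on the source side) produces the negative polynomial exponents $-p(\theta+1)\varepsilon$, $-p'(\theta+1)(1-\varepsilon)$ and their $(2\theta+1)$--counterparts, with existential splitting parameters $\varepsilon_M,\varepsilon_{\mathbb{M}}\in[0,1]$; the factor $2^{-d\kappa_*}$ in $\mathbb{M}_\pm^{\boldsymbol{\alpha}}$ again encodes the smoothness shift. Part (iii) is the one--weight specialisation $u=v=w$, $\kappa_*=0$: the ratios $w(Q_{\cdot r})/u(Q_{\cdot r})$ are identically $1$, the finite--band polynomial weights are bounded by a constant depending only on $\boldsymbol{\alpha}$ and $p$, hence $\mathbb{C}_\pm^{\boldsymbol{\alpha}}(0)\lesssim 1$ and \eqref{est3} follows immediately from \eqref{4T1more-}.

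\smallskip
The principal obstacle is the uniform--in--$d$ control of cross--level interactions in the matrix representation: a priori a scaling coefficient contributes to wavelet coefficients of every level (and vice versa), and one must show that these cross--level contributions are absorbed into the polynomially decaying tails already controlled by the single scaling--level tests $\mathbf{M}_\pm^{\boldsymbol{\alpha}}$ and ${M}_\pm^{\boldsymbol{\alpha}}$, so that only the diagonal--in--$d$ parts remain genuinely level--dependent and yield the quantities $\mathscr{M}_\pm^{\boldsymbol{\alpha}}$ and $\mathbb{M}_\pm^{\boldsymbol{\alpha}}$ with their uniform suprema over $d\in\mathbb{N}$. This block--diagonal reduction, together with the two freedoms in the Bradley splitting parameters (the supremum form in (i) with $\varepsilon\in\{0,1\}$ and the existential form in (ii) with $\varepsilon_M,\varepsilon_{\mathbb{M}}$), is the core technical content of the argument, and it is precisely here that the local Muckenhoupt hypothesis $u,v,w\in\mathscr{A}_\infty^{\rm loc}$ enters, through the reverse H\"older property on the dyadic cubes $Q_{(d-1)r}$.
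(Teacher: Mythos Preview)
Your overall strategy --- reduce via the spline--wavelet decomposition theorem to weighted discrete Hardy--type inequalities on coefficient sequences, then invoke the two--weight Bradley--Muckenhoupt test level by level --- is the same as in \cite{Ujms}, whose argument the present paper recapitulates and sharpens inside the proof of Theorem~\ref{S5T1-F}. Your account of (ii)--(iii) (the identity $D^{\boldsymbol{\alpha}}I_\pm^{\boldsymbol{\alpha}}f=(\pm1)^{\boldsymbol{\alpha}}f$, the finite--difference/banded structure of $D^{\boldsymbol{\alpha}}$ on splines, and the one--weight degeneration for (iii)) also matches.

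Where your plan diverges is the mechanism for (i). You propose to expand $f$ in the basis, apply $I_\pm^{\boldsymbol{\alpha}}$ to each generator, and then re--expand $I_\pm^{\boldsymbol{\alpha}}\varphi_{0r}$ and $I_\pm^{\boldsymbol{\alpha}}\psi_{d-1,r}$ in the same basis. You correctly observe that this produces cross--level interactions and flag that as the ``principal obstacle'', but you do not actually say how to dispose of it; appealing to reverse H\"older on dyadic cubes is not what closes this gap. The paper (following \cite{Ujms}) avoids the obstacle altogether by working in the \emph{dual} direction: one computes $\langle I_+^{\boldsymbol{\alpha}}f,\widetilde{\mathbf{\Phi}}_\tau\rangle$ and $\langle I_+^{\boldsymbol{\alpha}}f,\widetilde{\mathbf{\Psi}}_{(d-1)\tau}\rangle$ and moves $I_+^{\boldsymbol{\alpha}}$ onto the compactly supported test function by integration by parts. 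The key device is to pair two spline orders $n^*$ and $m^*=n^*+\boldsymbol{\alpha}$: one writes $B_{n^*}(\cdot-\tau)=\sum_{r\ge 0}A_r(\boldsymbol{\alpha})(-1)^{\boldsymbol{\alpha}}B_{m^*}^{(\boldsymbol{\alpha})}(\cdot-\tau+r+\boldsymbol{\alpha})$ via an iterated finite--difference inversion (with $A_r(\boldsymbol{\alpha})\le(1+r)^{\boldsymbol{\alpha}-1}$), and then uses $\langle I_+^{\boldsymbol{\alpha}}f,B_{m^*}^{(\boldsymbol{\alpha})}(\cdot-\sigma)\rangle=(-1)^{\boldsymbol{\alpha}}\langle f,B_{m^*}(\cdot-\sigma)\rangle$ (see \eqref{diff} and \eqref{18-12-1}); an analogous Zhu--Vandermonde manipulation \eqref{ChVan} handles the wavelet test functions. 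The outcome is that each level--$d$ coefficient of $I_+^{\boldsymbol{\alpha}}f$ is expressed \emph{purely} through level--$d$ pairings of $f$ against an order--$m^*$ spline system, so the matrix is block--diagonal in $d$ by construction and no cross--level reduction is ever needed. The polynomial weights $(|\tau-r|+1)^{\boldsymbol{\alpha}-1}$ and $(|\tau-r|+1)^{2\boldsymbol{\alpha}-1}$ then come from the coefficients $A_r$ of the iterated difference, not from a basis re--expansion; the $\mathscr{A}_\infty^{\rm loc}$ hypothesis enters only to compare $w(Q_{d\tau})$ with $w(Q_{d(\tau\pm k)})$ for bounded shifts $k$ (via Lemma~\ref{basa} and \eqref{dpI'}), not via any cross--level reverse--H\"older argument.
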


To further list the results from \cite{Ujms}, we introduce the following notations:
\begin{align*}
\widetilde{\mathbf{C}}_{\pm}^{\boldsymbol{\alpha}}(\kappa^*,c)&=
\widetilde{\mathbf{M}}_\pm^{\boldsymbol{\alpha}}(1,c)+\widetilde{\mathbf{M}}_\pm^{\boldsymbol{\alpha}}(0,c)+
\sup_{d\in\mathbb{N}}\Bigl[\widetilde{\mathscr{M}}_\pm^{\boldsymbol{\alpha}}(d,\kappa^*,1,c)
+\widetilde{\mathscr{M}}_\pm^{\boldsymbol{\alpha}}(d,\kappa^*,0,c)\Bigr],
\\\label{S6est2}\widetilde{\mathbb{C}}_{\pm}^{\boldsymbol{\alpha}}(\kappa_*,c)
&=\widetilde{M}_\pm^{\boldsymbol{\alpha}}(\varepsilon_{\widetilde{M}},c)+\sup_{d\in\mathbb{N}}
\widetilde{\mathbb{M}}_\pm^{\boldsymbol{\alpha}}
(d,\kappa_*,\varepsilon_{\widetilde{\mathbb{M}}},c),\end{align*}
where with ${Q_{dr}^{\langle c\rangle }}:=\Bigl[{\frac{r+c}{2^d}},{\frac{r+c+1}{2^d}}\Bigr]$ for
some $c\in\mathbb{R}$
\begin{gather*}
\widetilde{\mathbf{M}}_{+}^{\theta}(\varepsilon,c):=\sup_{\tau\in \mathbb{N}_0}\biggl(\sum_{r\ge\tau}(r-\tau+1)^{p(\theta-1)\varepsilon}
u(Q_{0r}^{\langle c\rangle})\biggr)^{\frac{1}{p}}\biggl(\sum_{0\le r\le\tau} (\tau-r+1)^{p'(\theta-1)(1-\varepsilon)}[v(Q_{0r})^{\langle c\rangle}]^{1-p'}\biggr)^{\frac{1}{p'}},\\
\widetilde{\mathbf{M}}_{-}^{\theta}(\varepsilon,c):=\sup_{\tau\in-\mathbb{N}_0}\biggl(\sum_{r\le\tau}(\tau-r+1)^{p(\theta-1)\varepsilon}
u(Q_{0r}^{\langle c\rangle})\biggr)^{\frac{1}{p}}\biggl(\sum_{0\ge r\ge\tau} (r-\tau+1)^{p'(\theta-1)(1-\varepsilon)}[v(Q_{0r}^{\langle c\rangle})]^{1-p'}\biggr)^{\frac{1}{p'}},\\
\widetilde{\mathscr{M}}_{+}^{\theta}(d,\kappa,\varepsilon,c):={2^{-d\kappa}}\sup_{\tau\in\mathbb{N}_0}\biggl(\sum_{r\ge\tau}(r-\tau+1)^{p(2\theta-1)\varepsilon}
u(Q_{(d-1)r}^{\langle c\rangle})\biggr)^{\frac{1}{p}}\\\times\biggl(\sum_{0\le r\le\tau} (\tau-r+1)^{p'(2\theta-1)(1-\varepsilon)}[v(Q_{(d-1)r}^{\langle c\rangle})]^{1-p'}\biggr)^{\frac{1}{p'}},\\
\widetilde{\mathscr{M}}_{-}^{\theta}(d,\kappa,\varepsilon,c):={2^{-d\kappa}}\sup_{\tau\in-\mathbb{N}_0}\biggl(\sum_{r\le\tau}(\tau-r+1)^{p(2\theta-1)\varepsilon}
u(Q_{(d-1)r}^{\langle c\rangle})\biggr)^{\frac{1}{p}}\\\times\biggl(\sum_{0\ge r\ge\tau} (r-\tau+1)^{p'(2\theta-1)(1-\varepsilon)}
[v(Q_{(d-1)r}^{\langle c\rangle})]^{1-p'}\biggr)^{\frac{1}{p'}},\\
\widetilde{M}_{+}^{\theta}(\varepsilon,c):=\sup_{\tau\in\mathbb{N}_0}\biggl(\sum_{r=\tau-\theta}^{\tau}(\tau-r+1)^{-p(\theta+1)\varepsilon}
w(Q_{0r}^{\langle c\rangle})\biggr)^{\frac{1}{p}}\biggl(\sum_{r=\tau}^{\tau+\theta} (r-\tau+1)^{-p'(\theta+1)(1-\varepsilon)}[u(Q_{0r}^{\langle c\rangle})]^{1-p'}\biggr)^{\frac{1}{p'}},\\
\widetilde{M}_{-}^{\theta}(\varepsilon,c):=\sup_{\tau\in-\mathbb{N}_0}\biggl(\sum_{r=\tau}^{\tau+\theta}(r-\tau+1)^{-p(\theta+1)\varepsilon}
w(Q_{0r}^{\langle c\rangle})\biggr)^{\frac{1}{p}}\biggl(\sum_{r=\tau-\theta}^{\tau} (\tau-r+1)^{-p'(\theta+1)(1-\varepsilon)}[u(Q_{0r}^{\langle c\rangle})]^{1-p'}\biggr)^{\frac{1}{p'}},\\
\widetilde{\mathbb{M}}_{+}^{\theta}(d,\kappa,\varepsilon,c):={2^{-d\kappa}}\sup_{\tau\in\mathbb{N}_0}\biggl(\sum_{r=\tau-\theta}^{\tau}(\tau-r+1)^{-p(2\theta+1)\varepsilon}
w(Q_{(d-1)r}^{\langle c\rangle})\biggr)^{\frac{1}{p}}\\\times\biggl(\sum_{r=\tau}^{\tau+\theta} (r-\tau+1)^{-p'(2\theta+1)(1-\varepsilon)}[u(Q_{(d-1)r}^{\langle c\rangle})]^{1-p'}\biggr)^{\frac{1}{p'}},\\
\widetilde{\mathbb{M}}_{-}^{\theta}(d,\kappa,\varepsilon,c):={2^{-d\kappa}}\sup_{\tau\in-\mathbb{N}_0}\biggl(\sum_{r=\tau}^{\tau+\theta}(r-\tau+1)^{-p(2\theta+1)\varepsilon}
w(Q_{(d-1)r}^{\langle c\rangle})\biggr)^{\frac{1}{p}}\\\times
\biggl(\sum_{r=\tau-\theta}^{\tau} (\tau-r+1)^{-p'(2\theta+1)(1-\varepsilon)}
[u(Q_{(d-1)r}^{\langle c\rangle})]^{1-p'}\biggr)^{\frac{1}{p'}}.\end{gather*}

\begin{theorem}[{\cite[Theorem 6.1]{Ujms}}]\label{S6T1}
Let 
$p> 1$, $0<q\le\infty$, $s\in\mathbb{R}$, 
$u,v\in\mathscr{A}_\infty^{\rm loc}$, 
$\boldsymbol{\alpha}\in\mathbb{N}$ 
and $f\in L_{\rm loc}(\mathbb{R})$. Suppose that
$f\equiv 0$ on $(-\infty,c)$ in the case of operator $I_{c^+}^{\boldsymbol{\alpha}}$ and, similarly, $f\equiv 0$ on $(c,\infty)$ for $I_{c^-}^{\boldsymbol{\alpha}}$.\\ 
{\rm (i)} Then
$I_{c^\pm}^{\boldsymbol{\alpha}}f\in {B}_{pq}^{s,u}(\mathbb{R})$
if
$f\in {B}_{pq}^{s+\kappa^*-\boldsymbol{\alpha},v}(\mathbb{R})$ with some
$\kappa^*\in\mathbb{R}$ such that
$\widetilde{\mathbf{C}}_{\pm}^{\boldsymbol{\alpha}}(\kappa^*,c)<\infty$. 
Moreover, 
\begin{equation}\label{S64T1more+}
\|I_{c^\pm}^{\boldsymbol{\alpha}}f\|_{{B}_{pq}^{s,u}(\mathbb{R})}\lesssim \widetilde{\mathbf{C}}_{\pm}^{\boldsymbol{\alpha}}(\kappa^*,c)\|f\|_{{B}_{pq}^{s+\kappa^*-\boldsymbol{\alpha},v}(\mathbb{R})}.
\end{equation}
{\rm (ii)} If
$I_{c^\pm}^{\boldsymbol{\alpha}}f\in {B}_{pq}^{s,u}(\mathbb{R})$
then
$f\in {B}_{pq}^{s-\kappa_*-\boldsymbol{\alpha},w}(\mathbb{R})$ with such $\kappa_*\in\mathbb{R}$ that with some $0\le\widetilde{\varepsilon_M},\widetilde{\varepsilon_\mathbb{M}}\le 1$
$\widetilde{\mathbb{C}}_{\pm}^{\boldsymbol{\alpha}}(\kappa_*,a)<\infty$.
Besides,
\begin{equation}\label{S64T1more-}
\|f\|_{{B}_{pq}^{s-\kappa_*-\boldsymbol{\alpha},w}(\mathbb{R})}\lesssim \widetilde{\mathbb{C}}_{\pm}^{\boldsymbol{\alpha}}(\kappa_*,a)\|I_{c^\pm}^{\boldsymbol{\alpha}}f\|_{{B}_{pq}^{s,u}(\mathbb{R})}.
\end{equation}
{\rm (iii)} If 
$I_{c^\pm}^{\boldsymbol{\alpha}}f\in {B}_{pq}^{s,u}(\mathbb{R})$ 
then 
$f\in {B}_{pq}^{s-\boldsymbol{\alpha},u}(\mathbb{R})$, and \eqref{est3} is true with $I_{c^\pm}^{\boldsymbol{\alpha}}$ instead of $I_{\pm}^{\boldsymbol{\alpha}}$.
\end{theorem}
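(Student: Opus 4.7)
The strategy is to reduce Theorem~\ref{S6T1} to the full-line result Theorem~\ref{S5T1} by combining a translation with extension by zero. Consider, for concreteness, the operator $I_{c^+}^{\boldsymbol{\alpha}}$ applied to $f$ vanishing on $(-\infty,c)$. Set $f_c(y):=f(y+c)$, so $f_c$ is supported on $[0,\infty)$. A direct change of variable shows that $I_{c^+}^{\boldsymbol{\alpha}}f(x)=(I_+^{\boldsymbol{\alpha}}f_c)(x-c)$ for $x>c$, while both vanish on $(-\infty,c)$. Since $\|g(\cdot-c)\|_{B_{pq}^{s,w}}=\|g\|_{B_{pq}^{s,w(\cdot+c)}}$, and since the standard dyadic cube $Q_{dr}$ evaluated against the translated weight $w(\cdot+c)$ equals $Q_{dr}^{\langle c\rangle}$ evaluated against the original $w$, the translation precisely accounts for the appearance of the shifted cubes $Q_{dr}^{\langle c\rangle}$ in the tilded constants $\widetilde{\mathbf{C}}_{\pm}^{\boldsymbol{\alpha}}(\kappa^*,c)$ and $\widetilde{\mathbb{C}}_{\pm}^{\boldsymbol{\alpha}}(\kappa_*,c)$. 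The $-$ case is mirror-symmetric.

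Once reduced to $c=0$ with $f_c$ supported on $[0,\infty)$, I would apply Theorem~\ref{S5T1} and use the support condition to restrict the summation ranges appearing in its constants. For the direct bound (i), the contribution of any cube $Q_{(d-1)r}$ with $r<0$ to $\|f_c\|_{B_{pq}^{s+\kappa^*-\boldsymbol{\alpha},v}}$ is trivial, so the inner sums over $r\le\tau$ in $\mathbf{M}_+^{\boldsymbol{\alpha}}$ and $\mathscr{M}_+^{\boldsymbol{\alpha}}$ need only range over $0\le r\le\tau$. Moreover, $I_+^{\boldsymbol{\alpha}}f_c$ is supported on $[0,\infty)$, so in the outer sums over $r\ge\tau$ only indices $r\ge 0$ give non-trivial weight contributions, which means the supremum over $\tau\in\mathbb{Z}$ can be replaced by the supremum over $\tau\in\mathbb{N}_0$. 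These two truncations are exactly the passage from $\mathbf{M}_\pm^{\boldsymbol{\alpha}},\mathscr{M}_\pm^{\boldsymbol{\alpha}}$ to $\widetilde{\mathbf{M}}_\pm^{\boldsymbol{\alpha}},\widetilde{\mathscr{M}}_\pm^{\boldsymbol{\alpha}}$, and inequality \eqref{4T1more+} of Theorem~\ref{S5T1} then yields \eqref{S64T1more+}. The converse (ii) follows by the same support-based truncation applied to \eqref{4T1more-}, and (iii) is immediate from Theorem~\ref{S5T1}(iii) since neither the translation nor the extension by zero affects the relevant Besov norm of $f$ on its support.

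The main obstacle, as usual for half-line problems in Besov spaces, will be the converse estimate (ii) near the endpoint $c$. One must ensure that in the Battle--Lemari\'e/$B$-spline decomposition of $I_{c^\pm}^{\boldsymbol{\alpha}}f$, the coefficients attached to basis functions straddling the boundary point $c$ are controlled by the restricted sums defining $\widetilde{\mathbb{C}}_\pm^{\boldsymbol{\alpha}}(\kappa_*,c)$ rather than by their unrestricted (but potentially infinite) full-line analogues. Equivalently, one needs to verify that the finitely many boundary-straddling terms contribute an amount comparable to the truncated cube sums, so that the truncated suprema genuinely dominate. Once this accounting is done, the proof of \eqref{S64T1more-} reduces to invoking \eqref{4T1more-} on the zero-extended function, and everything else is bookkeeping around the translation $y\mapsto y+c$.
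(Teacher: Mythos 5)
A preliminary remark: this paper does not prove Theorem~\ref{S6T1} at all --- it is quoted from \cite[Theorem~6.1]{Ujms}, and even the sharpened half--line criterion, Theorem~\ref{S6T1-F}, is dispatched with a pointer to \cite[\S\,6]{Ujms}. The only internal material to measure your proposal against is therefore the proof of the full--line Theorem~\ref{S5T1-F}, which works directly with the spline decompositions of Theorems~\ref{main'}--\ref{main*}, iterated finite differences of $B$--splines, and two--weight discrete Hardy inequalities. Your reduction --- translate by $c$, exploit the support condition, invoke Theorem~\ref{S5T1} --- is the right picture in outline, but it contains a genuine logical gap at its central step. Theorem~\ref{S5T1} is an implication whose hypothesis is finiteness of the \emph{untruncated} constants \eqref{est1}, \eqref{est2}: sums over all $r\le\tau$ and suprema over all $\tau\in\mathbb{Z}$. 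These can be infinite (for instance when $[v(Q_{0r})]^{1-p'}\to\infty$ as $r\to-\infty$) while the truncated constants $\widetilde{\mathbf{C}}_{\pm}^{\boldsymbol{\alpha}}(\kappa^*,c)$ are finite --- which is precisely the situation Theorem~\ref{S6T1} is designed for. You cannot simultaneously ``apply Theorem~\ref{S5T1}'' and ``restrict the summation ranges appearing in its constants'': restricting the ranges changes the hypothesis, so the theorem is no longer being applied, and \eqref{4T1more+} with a possibly infinite constant yields nothing. What is actually required is to re--run the proof, i.e.\ to feed the coefficient sequences $a_l=\langle f,B_{m^*}(\cdot+\boldsymbol{\alpha}-l)\rangle$, which under the support assumption vanish for all $l$ below a fixed boundary index, into the discrete Hardy inequality $\sum_{\tau}u(Q_{0\tau})\bigl|\sum_{l\le\tau}(\tau-l+1)^{\boldsymbol{\alpha}-1}a_l\bigr|^p\lesssim C^p\sum_{l}v(Q_{0l})|a_l|^p$ restricted to such one--sided sequences, whose sharp constant is the truncated one. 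That restriction--of--the--Hardy--inequality step is the real content of the half--line theorem, and your proposal asserts it rather than proves it.

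Three further concrete problems. First, the hypotheses do not match: Theorem~\ref{S5T1} assumes $f\in L^1(\mathbb{R})$, whereas Theorem~\ref{S6T1} assumes only $f\in L_{\rm loc}(\mathbb{R})$ vanishing on a half--line, which need not be globally integrable, so even a black--box application is not licensed. Second, the cube bookkeeping is off: with the paper's definition $Q_{dr}^{\langle c\rangle}=\bigl[(r+c)2^{-d},(r+c+1)2^{-d}\bigr]$, the shifted cube is $Q_{dr}$ translated by $c\,2^{-d}$, not by $c$, so your identity ``$w(\cdot+c)$ integrated over $Q_{dr}$ equals $w$ integrated over $Q_{dr}^{\langle c\rangle}$'' holds only at the level $d=0$; your reduction must match the constants as they are actually defined. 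Third, the boundary issue you yourself call ``the main obstacle'' --- at every scale there are $O(n)$ spline coefficients attached to basis functions whose supports meet $(-\infty,c)$ but which do not vanish --- is flagged and then deferred. Handling it requires the uniform comparability $w(Q_{(d-1)\tau})\approx w(Q_{(d-1)\tau'})$ for $|\tau-\tau'|\lesssim n$, available from Lemma~\ref{basa} and \eqref{dpI'} and used repeatedly in the proof of Theorem~\ref{S5T1-F}, but it has to be invoked, not postponed. Until the truncated Hardy inequalities and the boundary terms are actually dealt with, the proposal is a plan rather than a proof.
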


In Theorems \ref{S5T1} or \ref{S6T1}, sufficient conditions of the form \eqref{est1}
(or $\widetilde{\mathbf{C}}_{\pm}^{\boldsymbol{\alpha}}(\kappa^*,c)<\infty$)
and \eqref{est2} (or $\widetilde{\mathbb{C}}_{\pm}^{\boldsymbol{\alpha}}(\kappa_*,a)<\infty$)
for the validity of the inequalities \eqref{4T1more+} (or \eqref{S64T1more+}) and \eqref{4T1more-}
(or \eqref{S64T1more-}), respectively, are obtained.
In this paper, we improve these results up to criteria (see Theorems \ref{S5T1-F} 
and \ref{S6T1-F}) by suitably modifying the second component (the mother wavelet) of the spline wavelet systems used for obtaining upper bounds in the decomposition theorems (see Theorem \ref{main*}).

We write $h(\Omega):=\int_\Omega h(x)\, dx$, where {$\Omega \subset\mathbb{R}$}
is a bounded measurable set. Symbol $\mathbb{R}$ denotes the real axis,
$\mathbb{N}$ and $\mathbb{Z}$ --- the sets of natural and integer numbers, respectively. Notation
$\mathbb{N}_0$ is used for the set
$\mathbb{N}\cup\{0\}$, $[s]$ --- for the integer part of $s\in\mathbb{R}$. Relation
$A\lesssim B$ means that $A\le c B$ with some constant $c>0$,
depending only on fixed numerical parameters. Similarly,
$A\simeq B$ is used instead of $A=cB$. We write
$A\approx B$ in the case of $A\lesssim B\lesssim A$. For $r>1$, the conjugate
(or dual) exponent $r'$ is defined as $r/(r-1)$, with $r'=\infty$
if $r=1$. Symbols $:=$ and $=:$ are used to denote new quantities.
By $L_{\rm loc}(\mathbb{R})$ we denote the collection of all locally integrable functions
on $\mathbb{R}$. The notation $L^p(\mathbb{R})$, $0<p\le\infty$, stands for the Lebesgue spaces
of all measurable functions on $\mathbb{R}$ with the (quasi--)norm
$\|f\|_{L^p(\mathbb{R})}:=\bigl(\int_{\mathbb{R}}|f(x)|^pdx\bigr)^{1/p}$ admitting
standard modification for the case $p=\infty$.

\section{Weighted Besov type spaces}\label{Bes}
\subsection{Weights from the Muckenhoupt class ${A}_\infty$}\label{rw} 
Let $Q$ be a segment in $\mathbb{R}$ and $|Q|$ --- its length.  Denote
\begin{gather}\label{Mupa}
{A}_\rho[w(Q)]:=\frac{w(Q)}{|Q|}
\biggl(\frac{1}{|Q|}\int_{Q} w^{1-\rho'}\biggr)^{\frac{\rho}{\rho'}},\qquad {A}_1[w(Q)]:=
\frac{w(Q)}{|Q|}\,\|1/w\|_{L^\infty(Q)}.\end{gather} 

\begin{definition} {\rm (i) Weight $w$ belongs to \textit{Muckenhoupt class} ${A}_\rho$, 
$1<\rho<\infty$, if $$
{A}_\rho(w):=\sup_{Q\subset\mathbb{R}}{A}_\rho[w(Q)]<\infty;$$
(ii) $w\in{A}_1$ if 
 ${A}_1(w):=\sup_{Q\subset\mathbb{R}}{A}_1[w(Q)]<\infty$;\\
(iii) \textit{the Muckenhoupt class} ${A}_\infty$ is $\bigcup_{\rho\ge 1}{A}_\rho$.
}
\end{definition} 

Let us mention some properties of weights from the Muckenhoupt class \cite{M1,S}:\\ 
--- if $w\in{A}_\rho$, $1<\rho<\infty$, then $w^{-\rho'/\rho}\in{A}_{\rho'}$;\\
--- if $1\le \rho_1<\rho_2\le\infty$ then ${A}_{\rho_1}\subset {A}_{\rho_2}$;\\
--- if $w\in{A}_\rho$, $p>1$, then there exists $r<\rho$ such that $w\in{A}_r$.\\ 
The last property implies the existence of the number 
\begin{equation*}\label{r_0}r_0:=r_0(w):= \inf\{r\ge 1\colon w\in{A}_r\}<\infty, 
\qquad w\in{A}_\infty.\end{equation*} 

The functions $w\in A_\infty$ satisfy two important generalised doubling--type conditions \cite{dy}.

(I) For $w\in A_\rho$, $\rho\ge 1$, there exists a constant $c_I>0$ such that for all 
segments $B$ {and} 
$F$ satisfying $F\subset B$, the following holds:
\begin{equation}\label{dpI'}
 \left(\frac{|F|}{|B|}\right)^\rho \le c_I\frac{w(F)}{ w(B)}.
\end{equation}

(II) If $w\in A_\infty$ then there are $\rho^\ast>0$ and $c_{II}>0$ such that for all 
$B$ 
and $E$ such that $E\subset B $, \begin{equation}\label{dpII'}
\frac{w(E)}{ w(B)}\le c_{II}  \left(\frac{|E|}{|B|}\right)^{\rho^\ast}.
\end{equation}

For a more detailed study of the class $\mathscr{A}_\infty$, one can refer to
\cite[Chapter~V]{S} or, for example, to \cite[Lemma 1.3]{HSc}, \cite[Lemma 2.3]{HP}
and \cite[Lemma 1.4]{HSc} (and the literature therein).

\subsection{Weights from the local Muckenhoupt class ${A}_\infty^{\rm loc}$}\label{rwloc} 
The next class of weights covers admissible \cite{HTr2}, 
locally regular \cite{Ma,Sch,Sch'} as well as ${A}_\infty-$weights.

\begin{definition}\label{defin}{\rm (i) Weight $w$ is of 
\textit{local Muckenhoupt type} ${A}_\rho^{\rm loc}$, $1<\rho<\infty$, if (see. \eqref{Mupa}) $$
{A}_\rho^{\rm loc}(w):=\sup_{|Q|\le 1}{A}_\rho[w(Q)]<\infty;$$
(ii) $w\in{A}_1^{\rm loc}$ if $
{A}_1^{\rm loc}(w):=\sup_{|Q|\le 1}{A}_1[w(Q)]<\infty$;\\
(iii) we say that $w\in{A}_\infty^{\rm loc}$ if $w\in {A}_\rho^{\rm loc}$ 
for some $1\le \rho <\infty$, that is $${A}_\infty^{\rm loc}:=\bigcup_{\rho\ge 1}{A}_\rho^{\rm loc}.$$
}\end{definition} 
The weights of class ${A}_\infty^{\rm loc}$ largely repeat the properties of the weights from 
${A}_\infty$ \cite{R}. In particular, by analogy with $r_0$ in the case of $w\in {A}_{\infty}$,
there exists a number
\begin{equation}\label{r_w}{r}_w:= \inf\{r\ge 1\colon w\in{A}_r^{\rm loc}\}<\infty, \qquad w\in{A}_\infty^{\rm loc}.\end{equation} 
The doubling type property for $w\in{A}_\infty^{\rm loc}$ can be formulated as follows:
if $w\in\mathscr{A}_\rho^{\rm loc}$, $1<\rho\le\infty$, then there exists a constant $c_{w}>0$
such that the estimate $$w(Q_t)\le \exp(c_{w}\, t/s) w(Q_s)$$ holds for all $Q_s$ with $|Q_s|=s$,
$0<s\le 1$ and $Q_t$ such that $Q_s\subset Q_t$, where $|Q_t|=t$, $t\ge 1$.

Let us present {\it basic property} of ${A}_\rho^{\rm loc}-$weights with respect to 
${A}_\rho-$functions:
any $w\in{A}_\rho^{\rm loc}$ can be continued outside a given cube $Q$ in such a way that 
a new extended weight
$\bar{w}$ belongs to ${A}_\rho$. 
\begin{lemma}[{\rm \cite[Lemma 1.1]{R}}]\label{basa} 
Let $1\le \rho<\infty$, $w\in{A}_\rho^{\rm loc}$ and $Q$ be a cube with $|Q|=1$. 
Then there is $\bar{w}\in{A}_\rho$ such that $\bar{w}=w$ on $Q$ and with a 
constant $c$ independent of $Q$ the following holds: $${A}_\rho(\bar{w})\le c {A}_\rho^{\rm loc}(w).$$
\end{lemma}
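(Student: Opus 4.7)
The plan is to build $\bar{w}$ by reflecting $w|_Q$ across the endpoints of $Q$ and extending the resulting two-cell pattern periodically to $\mathbb{R}$, and then to verify $\bar{w}\in A_\rho$ by a case analysis on a test segment $B$ graded by $|B|$ and by its position relative to the integer lattice.

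First, by translation invariance of both classes I may assume $Q=[0,1]$. I define $\bar{w}$ on the fundamental cell $[0,2]$ by $\bar{w}(x):=w(x)$ on $[0,1]$ and $\bar{w}(x):=w(2-x)$ on $[1,2]$, and extend $2$-periodically to $\mathbb{R}$. By construction $\bar{w}\equiv w$ on $Q$, $\bar{w}$ is locally integrable and positive almost everywhere, and $\bar{w}$ is symmetric about every integer; in particular $\bar{w}([k,k+1])=w(Q)$ and $\int_{[k,k+1]}\bar{w}^{1-\rho'}=\int_Q w^{1-\rho'}$ for every $k\in\mathbb{Z}$.

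Next I would bound $A_\rho[\bar{w}(B)]$ uniformly in segments $B\subset\mathbb{R}$. If $|B|\le 1$ and $B\subset[k,k+1]$, a reflection/translation identifies $B$ with a congruent subsegment of $Q$, hence $A_\rho[\bar{w}(B)]\le A_\rho^{\rm loc}(w)$ directly. If instead $B$ straddles an integer $k$ at scale $|B|\le 1$, I enlarge it to the smallest symmetric interval $B^{*}=[k-c,k+c]$ containing it, with $c=\max(k-\inf B,\sup B-k)\le|B|\le 1$; by evenness about $k$ and by $2$-periodicity the averages on $B^{*}$ equal twice those of $w$ over a single interval $I\subset Q$ of length $c$, so $A_\rho[\bar{w}(B^{*})]=A_\rho[w(I)]\le A_\rho^{\rm loc}(w)$, and the inclusion $B\subset B^{*}$ together with $|B^{*}|\le 2|B|$ costs only a factor $2^{\rho}$. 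An entirely analogous symmetrisation around $k=1$ handles $1<|B|\le 2$. Finally, for $|B|>2$ the $2$-periodicity of $\bar{w}$ makes $\bar{w}(B)/|B|$ and $|B|^{-1}\int_B\bar{w}^{1-\rho'}$ comparable, within absolute constants, to the period averages $w(Q)$ and $\int_Q w^{1-\rho'}$, so $A_\rho[\bar{w}(B)]\lesssim A_\rho[w(Q)]\le A_\rho^{\rm loc}(w)$.

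The only delicate step is the straddling sub-case at scale $|B|\le 1$: a naive extension such as $\bar{w}=w$ on $Q$ and $\bar{w}=w(Q)$ outside breaks here, because $w$ may concentrate arbitrarily close to $\partial Q$ and the constant side need not be comparable to the $w$-side. The symmetric reflection is exactly the device that couples the two halves of such a $B$ back to a genuine subsegment of $Q$ of length $\le 1$, which is precisely what the hypothesis $w\in A_\rho^{\rm loc}$ controls. The case $\rho=1$ proceeds identically, with $\|1/\bar{w}\|_{L^\infty(B)}$ in place of the corresponding integral average, since reflections and periodicity preserve essential suprema as well as integrals. Combining all cases yields $A_\rho(\bar{w})\le c(\rho)\,A_\rho^{\rm loc}(w)$, with $c(\rho)$ independent of $w$ and $Q$, as required.
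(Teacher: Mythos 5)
Your construction is correct and coincides with the one behind the result as the paper uses it: Lemma \ref{basa} is quoted from \cite[Lemma 1.1]{R} without proof, and Rychkov's extension is exactly this even reflection across $\partial Q$ followed by $2\mathbb{Z}$--periodization, with the verification of the $A_\rho$ condition left to the reader. Your case analysis supplies those omitted details correctly (the only points to polish are that the range $1<|B|\le 2$ needs one further symmetrisation about the second integer, and that for $|B|>2$ only upper bounds on the two period averages are actually required), so the argument goes through with a constant of the form $c(\rho)=4^{\rho}$.
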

More information about local Muckenhoupt weights can be found in \cite{R,Ma,WBan,WM}.

\subsection{Weighted Besov type spaces} 
Let $p>0$, $0<q\le\infty$ and $s\in\mathbb{R}$. 
Definitions of unweighted Besov type spaces $B_{pq}^s(\mathbb{R})$ are given in 
\cite{Tr1, Tr2}. 
For a given $w\not\equiv 1$, their weighted generalisations can be specified in several ways depending on
which class $w$ belongs to \cite{HSc, Rou, Sch, R}. 

The space $\mathscr{S}'(\mathbb{R})$, which is topologically dual to the space 
$\mathscr{S}(\mathbb{R})$ of Schwartz functions, is used in unweighted case, 
as well as in situations when $w$ belongs to the Muckenhoupt class $A_\infty$.
For $\mathscr{A}_\infty^{\rm loc}-$weights, a more general space of distributions 
$\mathscr{S}'_e(\mathbb{R})$ is required than
$\mathscr{S}'(\mathbb{R})$. 
\begin{definition}{\rm 
The space $\mathscr{S}_e(\mathbb{R})$ consists of all functions
${\phi}\in C^\infty(\mathbb{R})$ such that 
$${q}_{N}({\phi}):=\sup_{x\in\mathbb{R}}\mathrm{e}^{{N}|x|}\sum_{0\le 
\gamma\le {N}}|D^\gamma{\phi}(x)|<\infty\qquad\textrm{for any}\quad {N}\in\mathbb{N}_0.$$ }
\end{definition} 

The set $\mathscr{S}_e(\mathbb{R})$ is equipped with a locally convex topology defined 
by the system of seminorms ${q}_{N}$. More details on the properties of 
$\mathscr{S}_e(\mathbb{R})$ can be found in \cite{Sch,R} and \cite[\S~3]{Ma}.
 
By $\mathscr{S}'_e(\mathbb{R})$ we denote the space of all linear continuous
functionals on $\mathscr{S}_e(\mathbb{R})$. The class $\mathscr{S}'_e(\mathbb{R})$
is sometimes conveniently identified with the subset in $\mathscr{D}'(\mathbb{R})$ of all
distributions $f$ on $\mathscr{D}(\mathbb{R})$ for which the estimate
$$|\langle f,{\phi}\rangle|\le C\sup\Bigl\{\bigl|D^\gamma {\phi}(x)\bigr|
\mathrm{e}^{\boldsymbol{N}|x|}\colon x\in\mathbb{R},\,0\le \gamma\le \boldsymbol{N}\Bigr\}$$
is satisfied for any ${\phi}\in\mathscr{D}(\mathbb{R})$ with some constants $C$ and
$\boldsymbol{N}$ depending on $f$. Each such $f$ can be extended to a continuous
functional on $\mathscr{S}_e(\mathbb{R})$.

For ${\phi},{\psi}\in\mathscr{S}_e(\mathbb{R})$, the convolution ${\phi}\ast{\psi}$
belongs to $\mathscr{S}_e(\mathbb{R})$. If $f\in\mathscr{S}'_e(\mathbb{R})$ and
${\phi}\in\mathscr{S}_e(\mathbb{R})$ then the convolution $$f\ast{\phi}(x)=:\langle f(\cdot),
{\phi}(x-\cdot)\rangle,\quad x\in\mathbb{R},$$
is a function in $C^\infty(\mathbb{R})$ of at most exponential growth.

To define Besov spaces with Muckenhoupt weights of local type,
we take ${\phi}_0\in\mathscr{D}(\mathbb{R})$ such that \begin{equation}\label{Iphi1}
\int_{\mathbb{R}}{\phi}_0(x)\,dx\not=0,\end{equation}
further, set
\begin{equation}\label{Iphidef}{\phi}(x)={\phi}_0(x)-2^{-1}{\phi}_0(x/2)\end{equation} and
define ${\phi}_d(x):=2^{(d-1)}{\phi}(2^{d-1}x)$ for $d\in\mathbb{N}$.
One can choose ${\phi}_0$ satisfying \begin{equation}\label{Iphi0}
\int_{\mathbb{R}}x^\gamma{\phi}(x)\,dx=0\end{equation} for all
$\gamma\in\mathbb{N}_0$, $\gamma\le \Gamma$, with $\Gamma\in\mathbb{N}$.
We write $\Gamma=-1$ if \eqref{Iphi0} does not hold. 


\begin{definition}\label{SDef2} {\rm 
Let \eqref{Iphi1} hold for some ${\phi}_0\in\mathscr{D}(\mathbb{R})$, and let ${\phi}$
of the form \eqref{Iphidef} satisfy \eqref{Iphi0} with $0\le \gamma\le \Gamma$, where
$\Gamma\ge [s]$. For $w\in A_\infty^{\rm loc}$, we define the weighted Besov type space 
$B_{pq}^{s,w}(\mathbb{R})$ of distributions as the set of all 
$f \in \mathscr{S}'_e(\mathbb{R})$ such that the (quasi--)norm
\begin{equation*}\label{Bspq}
\|f\|_{B_{pq}^{s}(\mathbb{R},w)}: =\biggl(\sum_{d=0}^\infty
2^{dsq}\bigl\| {\phi}_d\ast f\bigr\|_{L^p(\mathbb{R},w)}^q\biggr)^{\frac{1}{q}}
\end{equation*}
(with standard modification for $q=\infty$) is finite.
Here for $p>0$ and a weight $w$ the symbol $L^p(\mathbb{R},w)$ denotes the weighted 
Lebesgue space with the norm
$\|f\|_{L^p(\mathbb{R},w)}:=\|w^{1/p}f\|_{L^p(\mathbb{R})}$.
}
\end{definition}

The definition of $B_{pq}^{s,w}(\mathbb{R})$ does not depend on the choice of the 
function ${\phi}_0$.
It generalises the notion of Besov type smoothness spaces 
with $w\in A_\infty$ \cite[\S~2.2 and Remark 2.23]{R}.

The spaces $B^{s,w}_{pq}(\mathbb{R})$ are similar in their properties to unweighted
$B^{s}_{pq}(\mathbb{R})$, as well as to the classes $B^{s}_{pq}(\mathbb{R},w)$ with 
$w\in{A}_\infty$
(for more details, see \cite{IS,Ma,R,WBan,WM}).

\smallskip

Next, for $\tau\in\mathbb{Z}$ and $d\in\mathbb{N}_0$ we define dyadic segments
$Q_{d\tau}:=\bigl[{2^{-d}}{\tau},{2^{-d}}{(\tau+1)}\bigr]$ with the origin at the point
$x_{Q_{d\tau}}:=2^{-d}\tau$. In what follows, we will need Besov type spaces 
${b}_{pq}^{s}(w)$ of sequences 
${\boldsymbol{\lambda}}=\{{\boldsymbol{\lambda}}_{d\tau}\}_{d\in\mathbb{N}_0,\,
\tau\in\mathbb{Z}}$, ${\boldsymbol{\lambda}}_{d\tau}\in\mathbb{C}$, such that
\begin{equation*}
\|{\boldsymbol{\lambda}}\|_{{b}_{pq}^{s}(w)}: =\Bigl\|\sum_{\tau\in\mathbb{Z}}
|{\boldsymbol{\lambda}}_{0\tau}|\chi_{Q_{0\tau}}\Bigr\|_{L^p(\mathbb{R},w)}+
\biggl(\sum_{d\in\mathbb{N}_0}2^{qd s}\Bigl\|\sum_{\tau\in\mathbb{Z}}|
{\boldsymbol{\lambda}}_{d\tau}|\chi_{Q_{(d-1)\tau}}\Bigr\|_{L^p(\mathbb{R},w)}^q\biggr)^{1/q} <\infty.
\end{equation*}
In the definition of $\|{\boldsymbol{\lambda}}\|_{{b}_{pq}^{s}(w)}$, we can take
$Q_{(d-1)(\tau\pm 1/2)}$ instead of $Q_{(d-1)\tau}$ in the second term.
Due to \eqref{dpI'} property of Muckenhoupt weights (see also Lemma \ref{basa}), 
this is allowed in this situation.

\section{Criteria for the fulfillment of norm inequalities}
The results obtained in this section are based on decomposition theorems those relate
elements $f$ of Besov spaces $B_{pq}^{s,w}(\mathbb{R})$ with
sequential spaces ${b}_{pq}^{s}(w)$, as well as the corresponding to these spaces norms.
To prove them, we decompose $f$ with respect to spline wavelet systems 
of natural orders.

\subsection{Decompositions in terms of spline wavelets}\label{decomp}

Characterization of the spaces $B_{pq}^{s,w}(\mathbb{R})$ in terms of spline wavelets of
natural orders were given in \cite{PSI,Ujms} (see also \cite{RMC}).
In particular, it was established that
\begin{equation}\label{isom}
\|f\|_{B_{pq}^{s,w}(\mathbb{R})}\approx 
\|\widetilde{\boldsymbol{\lambda}}^{\langle\Bbbk,0\rangle}\|_{{b}_{pq}^{s}(w)},\end{equation}
where elements
\begin{equation}\label{asty'}\widetilde{\boldsymbol{\lambda}}_{0\tau}=\langle f,
\widetilde{\mathbf{\Phi}}_{\tau}\rangle\quad(\tau\in\mathbb{Z}),\qquad   
\widetilde{\boldsymbol{\lambda}}_{d\tau}^{\langle\Bbbk,0\rangle}=2^{d/2}\langle f,
\widetilde{\mathbf{\Psi}}^{\langle\Bbbk,0\rangle}_{(d-1)\tau}\rangle\quad(d\in\mathbb{N},
\,\tau\in\mathbb{Z})\end{equation}
of sequences $\widetilde{\boldsymbol{\lambda}}^{\langle\Bbbk,0\rangle}
=\{\widetilde{\boldsymbol{\lambda}}_{0,\tau}\}_{\tau\in\mathbb{Z}}
\cup\{\widetilde{\boldsymbol{\lambda}}^{\langle\Bbbk,0\rangle}_{d\tau}\}_{d\in\mathbb{N},
\,\tau\in\mathbb{Z}}$ с $\Bbbk\in\{0,1\}$ are defined by $\widetilde{\mathbf{\Phi}}_{\tau}$ и 
$\widetilde{\mathbf{\Psi}}^{\langle\Bbbk,0\rangle}_{(d-1)\tau}$ of the form \eqref{ForRepr'} 
with possible "starting"\, points $\boldsymbol{a}\in\{0,\pm1/2\}$. 
In \cite{PSI}, the case $\boldsymbol{a}=0$ was considered, while in \cite{RMC} ---
an even simpler situation $\boldsymbol{a}=\Bbbk=0$. 

The result \eqref{isom}
allows for some generalization (see Theorem \ref{main*}). Let us start, however,
with the base case $\boldsymbol{\zeta}=\Bbbk=0$. For this, taking into account
\eqref{r_w}, we denote 
$$\sigma_{p}(w):={{r}_w}/{\min\{p,{r}_w\}}-2+{r}_w.$$ 
We will assume that $\boldsymbol{s}=0$ in the definitions of
${\mathbf{\Psi}}^{\langle\boldsymbol{a}\rangle}\in \eqref{vazhnooW}$ and
$\widetilde{\mathbf{\Psi}}^{\langle \Bbbk,\boldsymbol{\zeta}\rangle}\in\eqref{vazhnoo}$,
$\Bbbk,\boldsymbol{\zeta}\in\{0,1\}$.

\begin{theorem}[\cite{Ujms,PSI}]\label{main'}
Let $p>0$, $0<q\le\infty$, $s\in\mathbb{R}$ and $w\in{A}_\infty^{\rm loc}$.
Let ${\mathbf{\Phi}}^{\langle\boldsymbol{a}\rangle}$ and
${\mathbf{\Psi}}^{\langle\boldsymbol{a}\rangle}$, where $\boldsymbol{a}\in\{0,\pm 1/2\}$,
be functions satisfying \eqref{vazhnooW} with some $n\in\mathbb{N}$ such that 
\begin{equation}\label{condB}n\ge \max\Bigl\{0,[s]+1,{\bigr[({r}_w-1)/p-s\bigr]}+1,
\bigl[\sigma_{p}(w)-s\bigr]\Bigr\}+1.\end{equation} 
Then $f\in\mathscr{S}'_e(\mathbb{R})$ belongs to $B_{pq}^{s,w}(\mathbb{R})$ 
if and only if the representation holds
\begin{equation}\label{function}
f=\sum_{\tau\in\mathbb{Z}} \boldsymbol{\lambda}_{0\tau}^{\langle\boldsymbol{a}\rangle}
{\mathbf{\Phi}}^{\langle\boldsymbol{a}\rangle}_\tau+
\sum_{d\in\mathbb{N}}\sum_{\tau\in\mathbb{Z}}\boldsymbol{\lambda}_{d\tau}^{\langle
\boldsymbol{a}\rangle}2^{-d/2}{\mathbf{\Psi}}_{(d-1)\tau}^{\langle\boldsymbol{a}\rangle},
\end{equation} where $\{{\boldsymbol{\lambda}}^{\langle
\boldsymbol{a}\rangle}_{d\tau}\}_{d\in\mathbb{N}_0,\,\tau\in\mathbb{Z}}=:
\boldsymbol{\lambda}^{\langle\boldsymbol{a}\rangle}\in b_{pq}^{s}(w)$ and the series
converges in
$\mathscr{S}'_e(\mathbb{R})$. The representation \eqref{function} is unique with
\begin{equation}\label{asty}\boldsymbol{\lambda}_{0\tau}^{\langle\boldsymbol{a}\rangle}=
\langle f,{\mathbf{\Phi}}_{\tau}^{\langle\boldsymbol{a}\rangle}\rangle\quad(\tau\in\mathbb{Z}),\qquad   \boldsymbol{\lambda}_{d\tau}^{\langle\boldsymbol{a}\rangle}=2^{d/2}\langle f,{\mathbf{\Psi}}_{(d-1)\tau}^{\langle\boldsymbol{a}\rangle}\rangle\quad(d\in\mathbb{N},\,\tau\in\mathbb{Z}),\end{equation}
and $I\colon f\mapsto \boldsymbol{\lambda}^{\langle\boldsymbol{a}\rangle}$ is a linear isomorphism of
$B_{pq}^{s,w}(\mathbb{R})$ onto $b_{pq}^{s}(w)$. Moreover, \begin{equation}\label{isom1}
\|f\|_{B_{pq}^{s,w}(\mathbb{R})}\approx \|{\boldsymbol{\lambda}}^{\langle\boldsymbol{a}\rangle}
\|_{{b}_{pq}^{s}(w)},\end{equation}
where the equivalence constants depend only on the numerical parameters. 
\end{theorem}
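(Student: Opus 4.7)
The plan is to establish the theorem by a classical analysis--synthesis scheme familiar from wavelet characterizations of Besov spaces, adapted to the local Muckenhoupt setting. I would split the proof into three parts: (S) \emph{synthesis}, showing that every $\boldsymbol{\lambda}^{\langle\boldsymbol{a}\rangle}\in b_{pq}^s(w)$ produces, via \eqref{function}, an element of $B_{pq}^{s,w}(\mathbb{R})$ with $\|f\|_{B_{pq}^{s,w}(\mathbb{R})}\lesssim \|\boldsymbol{\lambda}^{\langle\boldsymbol{a}\rangle}\|_{b_{pq}^s(w)}$; (A) \emph{analysis}, showing that for $f\in B_{pq}^{s,w}(\mathbb{R})$ the coefficients \eqref{asty} lie in $b_{pq}^s(w)$ with the reverse estimate; (U) \emph{uniqueness} of the representation, coming from biorthogonality with the dual system $\{\widetilde{\mathbf{\Phi}},\widetilde{\mathbf{\Psi}}^{\langle\Bbbk,0\rangle}\}$ underlying \eqref{isom}--\eqref{asty'}. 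Parts (S) and (A) together yield the equivalence \eqref{isom1} and the isomorphism property of $I$, while (U) forces \eqref{asty} in every admissible representation.

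For part (S) I would first record that Battle--Lemari\'e splines $\mathbf{\Phi}^{\langle\boldsymbol{a}\rangle},\mathbf{\Psi}^{\langle\boldsymbol{a}\rangle}$ of order $n$ have exponential decay and that $\mathbf{\Psi}^{\langle\boldsymbol{a}\rangle}$ has $n$ vanishing moments. Combining this with the moment condition \eqref{Iphi0} gives a standard off-diagonal estimate
\begin{equation*}
\bigl|{\phi}_k\ast(2^{-d/2}\mathbf{\Psi}^{\langle\boldsymbol{a}\rangle}_{(d-1)\tau})(x)\bigr|\lesssim 2^{-|k-d|(n+1/2)}\,\mathcal{D}_{\min(k,d)}\bigl(x-2^{-(d-1)}\tau\bigr),
\end{equation*}
where $\mathcal{D}_j$ is a fast-decaying bump at scale $2^{-j}$. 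Summing over $\tau$ and $d$ and estimating in $L^p(\mathbb{R},w)$ reduces the desired bound on $\|f\|_{B_{pq}^{s,w}(\mathbb{R})}$ to a vector-valued weighted maximal inequality. Since $w$ is only locally Muckenhoupt, I would invoke Lemma \ref{basa} to extend $w$ cube-wise to a genuine ${A}_\rho$ weight with $\rho$ close to ${r}_w$; on such extensions the Fefferman--Stein inequality is available, and contributions from tails outside the cube are absorbed by the exponential decay of the splines. The condition \eqref{condB} is exactly what is needed for the decay exponent $n+1/2$ to dominate the Besov scale factor $2^{ds}$ and the weight-growth exponent encoded by $\sigma_p(w)$. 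Convergence of \eqref{function} in $\mathscr{S}'_e(\mathbb{R})$ is verified simultaneously by estimating the seminorms ${q}_N$ on the partial sums, again using the exponential decay.

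For part (A) I would use the dual Battle--Lemari\'e system $\{\widetilde{\mathbf{\Phi}},\widetilde{\mathbf{\Psi}}^{\langle\Bbbk,0\rangle}\}$ and the reproducing identity already behind \eqref{isom}--\eqref{asty'}. Pairing $\mathbf{\Psi}^{\langle\boldsymbol{a}\rangle}_{(d-1)\tau}$ against the Littlewood--Paley representation $f=\sum_k{\phi}_k\ast(\widetilde{{\phi}}_k\ast f)$ and applying the same off-diagonal estimate as in (S), now in the reverse direction, yields the pointwise control
\begin{equation*}
|\boldsymbol{\lambda}_{d\tau}^{\langle\boldsymbol{a}\rangle}|\lesssim \inf_{y\in Q_{(d-1)\tau}}\mathscr{M}^{\ast}_{d}f(y),
\end{equation*}
where $\mathscr{M}^{\ast}_{d}$ is a Peetre-type maximal operator adapted to scale $2^{-d}$. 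Local Muckenhoupt structure is again transferred to classical ${A}_\rho$ via Lemma \ref{basa}, and the weighted Peetre maximal theorem converts the right-hand side into $\|{\phi}_d\ast f\|_{L^p(\mathbb{R},w)}$; taking the $\ell^q$ norm in $d$ delivers the $b_{pq}^s(w)$ bound.

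Uniqueness in (U) is then immediate from the biorthogonal relations $\langle\mathbf{\Phi}^{\langle\boldsymbol{a}\rangle}_\tau,\widetilde{\mathbf{\Phi}}_{\tau'}\rangle=\delta_{\tau\tau'}$ and their analogues for $\mathbf{\Psi}^{\langle\boldsymbol{a}\rangle},\widetilde{\mathbf{\Psi}}^{\langle\Bbbk,0\rangle}$: pairing any representation of the form \eqref{function} with the appropriate dual element recovers the formulas \eqref{asty}. The principal obstacle I anticipate sits in part (S): one must simultaneously control the series in $\mathscr{S}'_e(\mathbb{R})$ and in $B_{pq}^{s,w}(\mathbb{R})$ while replacing the globally unavailable $\mathscr{A}_\infty$ maximal function by the cube-by-cube extension furnished by Lemma \ref{basa}. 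The sharpness of \eqref{condB}, featuring both ${r}_w$ and $\sigma_p(w)$, reflects precisely the worst case in that extension argument, and the careful book-keeping of constants across dyadic scales is where the genuine work of the proof lies.
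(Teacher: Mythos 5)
Note first that the paper does not prove Theorem \ref{main'} at all: it is imported verbatim from \cite{Ujms,PSI}, and the only methodological hint given here (in the discussion after Theorem \ref{main*}) is that those proofs proceed by verifying that the localised spline system forms a family of \emph{atoms} in the sense of \cite{WBan,IS} and then invoking the general atomic decomposition theorem for $B_{pq}^{s,w}(\mathbb{R})$ with $w\in A_\infty^{\rm loc}$, the cases $\boldsymbol{a}=\pm1/2$ being reduced to $\boldsymbol{a}=0$ via \eqref{dpI'} and Lemma \ref{basa}. Your route --- redoing the local-means/Peetre-maximal/Fefferman--Stein machinery from scratch --- is the engine under the hood of that atomic decomposition theorem, so the two approaches are cousins rather than strangers; what the paper's route buys is that all the maximal-function bookkeeping is outsourced to \cite{IS,WBan}, and condition \eqref{condB} is then read off as the smoothness/moment hypothesis of the ready-made atomic theorem rather than derived from an off-diagonal decay count. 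One simplification you are missing: the functions ${\mathbf{\Phi}}^{\langle\boldsymbol{a}\rangle}$ and ${\mathbf{\Psi}}^{\langle\boldsymbol{a}\rangle}$ of \eqref{vazhnooW} are \emph{compactly supported} (see \eqref{phiphi} and \eqref{nosit1}; this is the entire purpose of the localisation algorithm in the Appendix), so your worry about absorbing exponential tails outside the cube in the $A_\rho^{\rm loc}\to A_\rho$ extension step is moot --- that is precisely why the localised system, rather than the raw Battle--Lemari\'e functions, appears in the theorem.

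The genuine gap is in your part (U). The relations $\langle{\mathbf{\Phi}}^{\langle\boldsymbol{a}\rangle}_\tau,\widetilde{\mathbf{\Phi}}_{\tau'}\rangle=\delta_{\tau\tau'}$ you invoke are false: by \eqref{vazhnooW} and \eqref{vazhnoo} both $\widetilde{\mathbf{\Phi}}$ and ${\mathbf{\Phi}}^{\langle\boldsymbol{a}\rangle}$ equal the $B$-spline $B_{n,\boldsymbol{a}}$, and integer shifts of a $B$-spline are not mutually orthogonal (the paper itself exploits the nonvanishing overlaps $\langle B_{m^*,\tau},B_{m^*,l}\rangle$ for $|l-\tau|\le m^*$ in the proof of Theorem \ref{S5T1-F}). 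The system is only \emph{semi-orthogonal}: cross-level orthogonality $V_0\perp W_l$ and $W_{l_0}\perp W_l$ ($l\neq l_0$) holds, but within a fixed level the shifts form a non-orthogonal Riesz basis, so pairing a representation \eqref{function} with a dual element produces the Gram matrix acting on the coefficients, not the coefficients themselves. Uniqueness therefore cannot be extracted by a Kronecker-delta pairing; it has to come from the Riesz-basis property of each level (equivalently, by passing to the orthonormal Battle--Lemari\'e pair $\{\phi_n,\psi_{n,\boldsymbol{s}}\}$ via \eqref{phiphi} and \eqref{nosit1} and inverting the finite banded change-of-basis matrices). Until that is done, your argument does not establish that the coefficients in \eqref{function} are forced to be those of \eqref{asty}, which is part of the assertion. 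The analysis and synthesis halves of your plan are sound in outline, but this uniqueness step needs to be rebuilt.
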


Below there is a formulation of an analogue of Theorem \ref{main'}, which uses
a dictionary of functions
$\widetilde{\mathbf{\Phi}}$ and $\widetilde{\mathbf{\Psi}}^{\langle\Bbbk,0
\rangle}$ of the form
\eqref{vazhnoo} with $\Bbbk=1$ and $\boldsymbol{\zeta}=0$. It also
asserts \eqref{isom} (see \cite[Theorem~4.8]{Ujms}), but with different representations 
for $f$ in its sufficiency and necessity parts.

\begin{theorem}\label{main}
Let $p>0$, $0<q\le\infty$, $s\in\mathbb{R}$ and $w\in{A}_\infty^{\rm loc}$. Suppose that
for some $n\in\mathbb{N}$ the condition \eqref{condB} is satisfied, and there are functions
${\mathbf{\Phi}}^{\langle\boldsymbol{a}\rangle}$ and
${\mathbf{\Psi}}^{\langle\boldsymbol{a}\rangle}$ with $\boldsymbol{a}\in\{0,\pm 1/2\}$,
satisfying \eqref{vazhnooW}, as well as functions $\widetilde{\mathbf{\Phi}}$ and
$\widetilde{\mathbf{\Psi}}^{\langle 1,0\rangle}$ of the form \eqref{vazhnoo}, 
both the pairs of smoothness $n$.

{\rm (i)} A distribution $f\in\mathscr{S}'_e(\mathbb{R})$ belongs to 
$B_{pq}^{s,w}(\mathbb{R})$, if there is a representation
\begin{equation}\label{function'}
f=\sum_{\tau\in\mathbb{Z}} \widetilde{\boldsymbol{\lambda}}_{0\tau}
\widetilde{\mathbf{\Phi}}_\tau+
\sum_{d\in\mathbb{N}}\sum_{\tau\in\mathbb{Z}}
\widetilde{\boldsymbol{\lambda}}_{d\tau}^{\langle 1,0\rangle} 2^{-d/2}
\widetilde{\mathbf{\Psi}}_{(d-1)\tau}^{\langle 1,0\rangle},
\end{equation} where $\widetilde{\boldsymbol{\lambda}}^{\langle 1,0\rangle}$ of the form 
\eqref{asty'} is from $b_{pq}^{s}(w)$ and the series \eqref{function'} 
converges in $\mathscr{S}'_e(\mathbb{R})$. 
Moreover, 
\begin{equation*}\label{isom'}
\|f\|_{B_{pq}^{s,w}(\mathbb{R})}\lesssim \|
\widetilde{\boldsymbol{\lambda}}^{\langle 1,0\rangle}\|_{{b}_{pq}^{s}(w)}\lesssim 
\sum_{\boldsymbol{a}=0,\pm1/2}\|{\boldsymbol{\lambda}}^{\langle 
\boldsymbol{a}\rangle}\|_{{b}_{pq}^{s}(w)}\end{equation*} 
with ${\boldsymbol{\lambda}}^{\langle \boldsymbol{a}\rangle}$ of the form \eqref{asty}.\\
{\rm (ii)} Conversely, if $f\in\mathscr{S}'_e(\mathbb{R})$ belongs to
$B_{pq}^{s,w}(\mathbb{R})$, then for $\boldsymbol{a}\in\{0,\pm1/2\}$ we have
\eqref{function} in the sense of convergence in $\mathscr{S}'_e(\mathbb{R})$,
where $\boldsymbol{\lambda}^{\langle \boldsymbol{a}\rangle}\in b_{pq}^{s}(w)$
is given by \eqref{asty}. Moreover, 
\begin{equation*}\label{isomN}
\|\widetilde{\boldsymbol{\lambda}}^{\langle 1,0\rangle}\|_{{b}_{pq}^{s}(w)}\le 
\sum_{\boldsymbol{a}=0,1/2}\|{\boldsymbol{\lambda}}^{\langle 
\boldsymbol{a}\rangle}\|_{{b}_{pq}^{s}(w)}\lesssim\|f\|_{B_{pq}^{s,w}(\mathbb{R})}.
\end{equation*}
\end{theorem}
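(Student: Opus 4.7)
The natural strategy is to reduce the statement to the already available Theorem \ref{main'} by exploiting \emph{local cross-expansions} between the dictionary $(\widetilde{\mathbf{\Phi}},\widetilde{\mathbf{\Psi}}^{\langle 1,0\rangle})$ from \eqref{vazhnoo} and the classical systems $({\mathbf{\Phi}}^{\langle\boldsymbol{a}\rangle},{\mathbf{\Psi}}^{\langle\boldsymbol{a}\rangle})$ from \eqref{vazhnooW} with $\boldsymbol{a}\in\{0,\pm 1/2\}$. Both families are built from the same $B$-spline scale but at shifted origins, so each element of one system coincides with a \emph{finite} linear combination (with uniformly bounded coefficients) of translates of elements of the other system at the same and at neighbouring dyadic levels, since all basis functions are compactly supported with comparable supports.

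For part (ii) I would start with $f\in B_{pq}^{s,w}(\mathbb{R})$ and apply Theorem \ref{main'} for $\boldsymbol{a}=0$ and $\boldsymbol{a}=1/2$ to obtain the representations \eqref{function} together with the bounds $\|{\boldsymbol{\lambda}}^{\langle\boldsymbol{a}\rangle}\|_{{b}_{pq}^{s}(w)}\lesssim\|f\|_{B_{pq}^{s,w}(\mathbb{R})}$. Plugging either representation into the defining pairings \eqref{asty'} and using the finite cross-expansions of $\widetilde{\mathbf{\Phi}}$ and $\widetilde{\mathbf{\Psi}}^{\langle 1,0\rangle}$ in terms of ${\mathbf{\Phi}}^{\langle\boldsymbol{a}\rangle},{\mathbf{\Psi}}^{\langle\boldsymbol{a}\rangle}$ shows that each $\widetilde{\boldsymbol{\lambda}}_{0\tau}$ and $\widetilde{\boldsymbol{\lambda}}_{d\tau}^{\langle 1,0\rangle}$ is a finite weighted sum of coefficients ${\boldsymbol{\lambda}}^{\langle\boldsymbol{a}\rangle}_{d'\tau'}$ with $|d-d'|\le 1$ and with $|\tau-\tau'|$ bounded by the spline order. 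Transferring this relation to the sequence norm and comparing $w$-measures of neighbouring dyadic intervals via the doubling condition \eqref{dpI'}--\eqref{dpII'} (inherited through Lemma \ref{basa}) yields $\|\widetilde{\boldsymbol{\lambda}}^{\langle 1,0\rangle}\|_{{b}_{pq}^{s}(w)}\le\sum_{\boldsymbol{a}=0,1/2}\|{\boldsymbol{\lambda}}^{\langle\boldsymbol{a}\rangle}\|_{{b}_{pq}^{s}(w)}$, which together with Theorem \ref{main'} gives \eqref{isomN}.

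For part (i) I would first check convergence of the series \eqref{function'} in $\mathscr{S}'_e(\mathbb{R})$ by a standard atomic-type estimate, using the compact support, smoothness $n$ and vanishing moments of $\widetilde{\mathbf{\Phi}}$ and $\widetilde{\mathbf{\Psi}}^{\langle 1,0\rangle}$ guaranteed by the construction in \eqref{vazhnoo} together with the lower bound \eqref{condB} on $n$. The estimate $\|f\|_{B_{pq}^{s,w}(\mathbb{R})}\lesssim\|\widetilde{\boldsymbol{\lambda}}^{\langle 1,0\rangle}\|_{{b}_{pq}^{s}(w)}$ then follows by the same atomic decomposition machinery underlying the sufficiency part of Theorem \ref{main'}, since \eqref{condB} ensures that the pair $(\widetilde{\mathbf{\Phi}},\widetilde{\mathbf{\Psi}}^{\langle 1,0\rangle})$ qualifies as a family of $B$-spline atoms at smoothness $s$ for the weight $w\in A_\infty^{\rm loc}$. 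The chained bound by $\sum_{\boldsymbol{a}=0,\pm 1/2}\|{\boldsymbol{\lambda}}^{\langle\boldsymbol{a}\rangle}\|_{{b}_{pq}^{s}(w)}$ is then a direct consequence of Step 1 applied to this particular $f$, using that $\|f\|_{B_{pq}^{s,w}(\mathbb{R})}\approx\|{\boldsymbol{\lambda}}^{\langle\boldsymbol{a}\rangle}\|_{{b}_{pq}^{s}(w)}$ by Theorem \ref{main'}.

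The main technical obstacle is to make the cross-expansions uniformly controlled in the \emph{weighted} sequence setting. Rewriting $\widetilde{\mathbf{\Psi}}^{\langle 1,0\rangle}_{(d-1)\tau}$ as a combination of $\mathbf{\Psi}^{\langle\boldsymbol{a}\rangle}$-translates at levels $d-1$ and $d$ forces one to compare $w(Q_{(d-1)\tau})$ with $w(Q_{(d-1)(\tau+k)})$ for $|k|\lesssim n$, uniformly in $d\in\mathbb{N}$ and $\tau\in\mathbb{Z}$; this is where the local-doubling property of $A_\infty^{\rm loc}$-weights, together with Lemma \ref{basa} extending $w$ to a genuine $A_\rho$-weight on each unit cube, is indispensable. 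Without it, the finitely supported cross-expansions on the function side would not translate into bounded operators on $b_{pq}^{s}(w)$, and the equivalence of sequence norms expressed by \eqref{isom} would fail in the local-Muckenhoupt regime.
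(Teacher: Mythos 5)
Your proposal follows essentially the same route as the paper's own (which defers the details to \cite[Theorem 4.8]{Ujms}): sufficiency in part (i) rests on the observation that the functions $\widetilde{\mathbf{\Psi}}^{\langle 1,0\rangle}_{(d-1)\tau}$ form a family of atoms, and part (ii) on Theorem \ref{main'} together with the finite expansion \eqref{ksi} of $\widetilde{\mathbf{\Psi}}^{\langle 1,0\rangle}$ into integer and half--integer shifts of $\mathbf{\Psi}^{\langle \boldsymbol{a}\rangle}$, transferred to the weighted sequence norms via \eqref{dpI'} and Lemma \ref{basa}. The only imprecision is your claim that the cross--expansion reaches neighbouring dyadic levels $|d-d'|\le 1$: by \eqref{ksi} it stays at the same level, the half--integer shifts being absorbed by passing to the systems with $\boldsymbol{a}=\pm 1/2$, which is precisely why the bounds in the theorem are summed over $\boldsymbol{a}$.
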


A generalisation of Theorem \ref{main} in terms of
$\widetilde{\mathbf{\Phi}}$ and $\widetilde{\mathbf{\Psi}}^{\langle1,1
\rangle}$ of the form \eqref{vazhnoo} is given below.

\begin{theorem}\label{main*}
Let $p>0$, $0<q\le\infty$, $s\in\mathbb{R}$ and $w\in{A}_\infty^{\rm loc}$. Suppose that
for some $n\in\mathbb{N}$ the condition \eqref{condB} is satisfied, and there are functions
${\mathbf{\Phi}}^{\langle\boldsymbol{a}\rangle}$ and
${\mathbf{\Psi}}^{\langle\boldsymbol{a}\rangle}$ with $\boldsymbol{a}\in\{0,\pm 1/2\}$,
satisfying \eqref{vazhnooW}, as well as functions $\widetilde{\mathbf{\Phi}}$ and
$\widetilde{\mathbf{\Psi}}^{\langle 1,1\rangle}$ of the form \eqref{vazhnoo}, 
the both pairs of smoothness $n$.

{\rm (i)} A distribution $f\in\mathscr{S}'_e(\mathbb{R})$ belongs to 
$B_{pq}^{s,w}(\mathbb{R})$ if there is a representation
\begin{equation*}\label{function'*}
f=\sum_{\tau\in\mathbb{Z}} \widetilde{\boldsymbol{\lambda}}_{0\tau}
\widetilde{\mathbf{\Phi}}_\tau+
\sum_{d\in\mathbb{N}}\sum_{\tau\in\mathbb{Z}}
\widetilde{\boldsymbol{\lambda}}_{d\tau}^{\langle 1,1\rangle} 2^{-d/2}
\widetilde{\mathbf{\Psi}}_{(d-1)\tau}^{\langle 1,1\rangle},
\end{equation*} 
where the sequence $\widetilde{\boldsymbol{\lambda}}^{\langle1,1\rangle}
=\{\widetilde{\boldsymbol{\lambda}}_{0,\tau}\}_{\tau\in\mathbb{Z}}
\cup\{\widetilde{\boldsymbol{\lambda}}^{\langle1,1\rangle}_{d\tau}\}_{d\in\mathbb{N},
\,\tau\in\mathbb{Z}}$ of the form
\begin{equation}\label{asty'*}\widetilde{\boldsymbol{\lambda}}_{0\tau}=\langle f,
\widetilde{\mathbf{\Phi}}_{\tau}\rangle\quad(\tau\in\mathbb{Z}),\qquad   
\widetilde{\boldsymbol{\lambda}}_{d\tau}^{\langle1,1\rangle}=2^{d/2}\langle f,
\widetilde{\mathbf{\Psi}}^{\langle1,1\rangle}_{(d-1)\tau}\rangle\quad(d\in\mathbb{N},
\,\tau\in\mathbb{Z})\end{equation} is from $b_{pq}^{s}(w)$ with 
$\widetilde{\mathbf{\Phi}}_{\tau}$ and 
$\widetilde{\mathbf{\Psi}}^{\langle1,1\rangle}_{(d-1)\tau}$ given by \eqref{ForRepr'} with 
$\boldsymbol{a}=\{0,\pm1/2\}$. In this case, the series
\eqref{function'} converges in $\mathscr{S}'_e(\mathbb{R})$. In addition, with
${\boldsymbol{\lambda}}^{\langle \boldsymbol{a}\rangle}$
of the form \eqref{asty}, we have
\begin{equation}\label{isom'*}
\|f\|_{B_{pq}^{s,w}(\mathbb{R})}\lesssim \|
\widetilde{\boldsymbol{\lambda}}^{\langle 1,1\rangle}\|_{{b}_{pq}^{s}(w)}\lesssim
\sum_{\boldsymbol{a}=0,\pm1/2}\|{\boldsymbol{\lambda}}^{\langle \boldsymbol{a}\rangle}
\|_{{b}_{pq}^{s}(w)}.\end{equation}
{\rm (ii)} Conversely, if $f\in\mathscr{S}'_e(\mathbb{R})$ belongs to
$B_{pq}^{s,w}(\mathbb{R})$, then the
\eqref{function} holds for $\boldsymbol{a}\in\{0,\pm1/2\}$ in the sense of convergence in 
$\mathscr{S}'_e(\mathbb{R})$, where
$\boldsymbol{\lambda}^{\langle \boldsymbol{a}\rangle}\in b_{pq}^{s}(w)$
is given by \eqref{asty}. Besides,
\begin{equation*}\label{isomN*}
\|\widetilde{\boldsymbol{\lambda}}^{\langle 1,1\rangle}\|_{{b}_{pq}^{s}(w)}\le
\sum_{\boldsymbol{a}=0.1/2}\|{\boldsymbol{\lambda}}^{\langle \boldsymbol{a}\rangle}\|
_{{b}_{pq}^{s}(w)}\lesssim\|f\|_{B_{pq}^{s,w}(\mathbb{R})}.\end{equation*}
\end{theorem}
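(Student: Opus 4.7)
The plan is to follow the same architecture as Theorem \ref{main}, whose proof in \cite[Theorem 4.8]{Ujms} already establishes the template: pair the tilde (Battle--Lemari\'e type) dictionary against the spline dictionary of Theorem \ref{main'} and use near-diagonal estimates between the two systems. The only structural change is that the mother wavelet has been replaced by $\widetilde{\mathbf{\Psi}}^{\langle 1,1\rangle}$, so every step where $\widetilde{\mathbf{\Psi}}^{\langle 1,0\rangle}$ appeared in \cite{Ujms} has to be re-verified with the shifted center $\boldsymbol{\zeta}=1$.

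First, for part (i), I would set
\[
g:=\sum_{\tau} \widetilde{\boldsymbol{\lambda}}_{0\tau}\widetilde{\mathbf{\Phi}}_\tau
+\sum_{d\in\mathbb{N}}\sum_{\tau} \widetilde{\boldsymbol{\lambda}}_{d\tau}^{\langle 1,1\rangle}\,
2^{-d/2}\widetilde{\mathbf{\Psi}}_{(d-1)\tau}^{\langle 1,1\rangle}
\]
and show convergence in $\mathscr{S}'_e(\mathbb{R})$ together with $\|g\|_{B_{pq}^{s,w}(\mathbb{R})}\lesssim \|\widetilde{\boldsymbol{\lambda}}^{\langle 1,1\rangle}\|_{b_{pq}^{s}(w)}$. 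Since $\widetilde{\mathbf{\Phi}}$ and $\widetilde{\mathbf{\Psi}}^{\langle 1,1\rangle}$ are of the form \eqref{vazhnoo} with smoothness $n$ satisfying \eqref{condB}, they qualify as smooth molecules (exponential decay, $n$ vanishing moments for the wavelet, $C^n$ smoothness). The standard molecular/atomic synthesis estimate for $B_{pq}^{s,w}(\mathbb{R})$ with $w\in A_\infty^{\rm loc}$, already invoked in the proofs of Theorems \ref{main'} and \ref{main}, then yields the required upper bound; the shift of center by $\boldsymbol{\zeta}=1$ does not affect the molecular bookkeeping since a molecule centered at $2^{-(d-1)}(\tau+1)$ is again a molecule associated to a neighboring dyadic cube.

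For the second inequality in \eqref{isom'*}, I would expand $f$ using Theorem \ref{main'} in the form \eqref{function} for each $\boldsymbol{a}\in\{0,\pm 1/2\}$ and substitute into the pairing formulas \eqref{asty'*}. This produces
\[
\widetilde{\boldsymbol{\lambda}}_{d\tau}^{\langle 1,1\rangle}
=\sum_{d',\sigma} \boldsymbol{\lambda}_{d'\sigma}^{\langle \boldsymbol{a}\rangle}\,
2^{d/2-d'/2}\bigl\langle \mathbf{\Psi}_{(d'-1)\sigma}^{\langle \boldsymbol{a}\rangle},
\widetilde{\mathbf{\Psi}}_{(d-1)\tau}^{\langle 1,1\rangle}\bigr\rangle
\]
(plus the analogous low-frequency terms mixing $\mathbf{\Phi}$ and $\widetilde{\mathbf{\Phi}}$). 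By the compact support and vanishing-moment / smoothness properties in \eqref{vazhnooW}, \eqref{vazhnoo} the matrix entries are almost-diagonal of Frazier--Jawerth type: they decay in $|d-d'|$ like $2^{-n|d-d'|}$ and in the spatial shift like a high power of $(1+2^{\min(d,d')}|x_{Q_{(d-1)\tau}}-x_{Q_{(d'-1)\sigma}}|)^{-1}$. The scale-shifted dyadic grid stemming from $\boldsymbol{\zeta}=1$ again amounts only to a uniformly bounded translation, absorbed by the $A_\infty^{\rm loc}$ doubling property (see the remark after the definition of $b_{pq}^s(w)$). Applying the sequence-space boundedness of such almost-diagonal operators on $b_{pq}^{s}(w)$ (valid under \eqref{condB}, which fixes $n$ large enough relative to $s$ and $r_w$) gives the desired majorization by $\sum_{\boldsymbol{a}=0,\pm 1/2}\|\boldsymbol{\lambda}^{\langle \boldsymbol{a}\rangle}\|_{b_{pq}^s(w)}$.

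For part (ii), the representation \eqref{function} and the second inequality there are supplied directly by Theorem \ref{main'}. The only additional estimate to prove is $\|\widetilde{\boldsymbol{\lambda}}^{\langle 1,1\rangle}\|_{b_{pq}^s(w)}\le \sum_{\boldsymbol{a}=0,1/2}\|\boldsymbol{\lambda}^{\langle \boldsymbol{a}\rangle}\|_{b_{pq}^s(w)}$, which is exactly the almost-diagonal bound established in the previous paragraph. The main obstacle is checking that the new wavelet $\widetilde{\mathbf{\Psi}}^{\langle 1,1\rangle}$ still delivers the decay and cancellation needed for that almost-diagonal estimate in the weighted sequence space $b_{pq}^{s}(w)$: one must verify that $\widetilde{\mathbf{\Psi}}^{\langle 1,1\rangle}$ inherits from \eqref{vazhnoo} the same $n$ vanishing moments and the same exponential decay as $\widetilde{\mathbf{\Psi}}^{\langle 1,0\rangle}$ (so that the arguments of \cite[Theorem 4.8]{Ujms} transfer verbatim), and that the shift encoded by $\boldsymbol{\zeta}=1$ produces at worst a bounded dyadic translation which the $A_\rho^{\rm loc}$--doubling via Lemma \ref{basa} and property \eqref{dpI'} reabsorb. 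Once these structural properties are checked, the rest of the argument is identical to that for Theorem \ref{main}.
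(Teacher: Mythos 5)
Your proposal follows the same route the paper takes: the paper's entire argument for Theorem \ref{main*} is the observation that the proof of Theorem \ref{main} (i.e.\ of \cite[Theorem~4.8]{Ujms}) carries over because the functions $\widetilde{\mathbf{\Psi}}^{\langle 1,1\rangle}_{(d-1)\tau}$ still form a family of atoms in the sense of \cite{WBan,IS} and the half--integer shifts introduced by $\boldsymbol{\zeta}=1$ are absorbed via Lemma \ref{basa} and the doubling property \eqref{dpI'} --- precisely the verification points you list. The one place where you invoke heavier machinery than the paper needs is the coefficient comparison: by \eqref{ksi*} the new mother wavelet is an \emph{exact, finite, same--scale} linear combination of integer and half shifts of $\Psi_{n,\boldsymbol{a},\boldsymbol{s}}$, so $\widetilde{\boldsymbol{\lambda}}^{\langle 1,1\rangle}_{d\tau}$ is a finite combination of the $\boldsymbol{\lambda}^{\langle\boldsymbol{a}\rangle}_{d\sigma}$ at the same level $d$ and no cross--scale Frazier--Jawerth almost--diagonal estimate is required (which is also why the first inequality in part (ii) is stated with a plain $\le$ rather than the $\lesssim$ your argument would yield).
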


Observe that in the original decomposition theorem
using spline wavelet systems of natural orders (see \cite[\S\,4.3]{PSI}) the case
$\boldsymbol{a}=0$ was considered. Later 
in \cite[Theorem~4.8]{Ujms} the cases $\boldsymbol{a}=\pm 1/2$ were added, they are 
justified by the \eqref{dpI'} property of Muckenhoupt weights and Lemma \ref{basa}.
The proof of Theorem \ref{main*} is similar to the proof of Theorem \ref{main}
(see \cite[Theorem~4.8]{Ujms}, as well as \cite[Theorem~4.3]{PSI}). 
The basis for it is the fact that the elements of the sequence 
$\widetilde{\boldsymbol{\lambda}}^{\langle1,1\rangle}$
form a family of atoms \cite{WBan, IS} (see also \cite[Definition~4.1]{PSI}).

\subsection{The main result}
Let us introduce into consideration a functional
\begin{equation*}
\mathfrak{M}(d,\kappa;\sigma_1,\sigma_2):=\sup_{\tau\in\mathbb{Z}}
\begin{cases}
\bigl[\sigma_1(Q_{0\tau})\bigr]^{-\frac{1}{p}}\bigl[\sigma_2(Q_{0\tau})\bigr]^{\frac{1}{p}}, &d=0,\\
2^{-d\kappa}\bigl[\sigma_1(Q_{(d-1)\tau})\bigr]^{-\frac{1}{p}}
\bigl[\sigma_2(Q_{(d-1)\tau})\bigr]^{\frac{1}{p}}, & d\in\mathbb{N}.
\end{cases}\end{equation*}

\begin{theorem}\label{S5T1-F}
Let
$p> 1$, $0<q\le\infty$, $s\in\mathbb{R}$, weights
$u,v\in\mathscr{A}_\infty^{\rm loc}$ and
$\boldsymbol{\alpha}\in\mathbb{N}$. Suppose that $f\in L^1(-\infty,x)$ for any
$x\in\mathbb{R}$ in the case of $I_{+}^{\boldsymbol{\alpha}}$ and $f\in L^1(x,+\infty)$ for all
$x\in\mathbb{R}$ for $I_{-}^{\boldsymbol{\alpha}}$.\\ 
{\rm (i)} The operator $I_{\pm}^{\boldsymbol{\alpha}}$ is bounded from
${B}_{pq}^{s+\kappa^*-\boldsymbol{\alpha},v}(\mathbb{R})$ to
${B}_{pq}^{s,u}(\mathbb{R})$ with some $\kappa^*\in\mathbb{R}$
if and only if
\begin{equation}\label{estN1}
\mathfrak{C}_{\pm}^{\boldsymbol{\alpha}}(\kappa^*)=
\mathbf{M}_\pm^{\boldsymbol{\alpha}}(1)+\mathbf{M}_\pm^{\boldsymbol{\alpha}}(0)+
\sup_{d\in\mathbb{N}}\mathfrak{M}(d,\kappa^*;v,u)<\infty.\end{equation}
In addition, the norm $\|I_{\pm}^{\boldsymbol{\alpha}}\|_{{B}_{pq}^{s+\kappa^*-\boldsymbol{\alpha},v}(\mathbb{R})\to
{B}_{pq}^{s,u}(\mathbb{R})}$ of the operator $I_{\pm}^{\boldsymbol{\alpha}}
\colon {B}_{pq}^{s+\kappa^*-\boldsymbol{\alpha},v}(\mathbb{R})\to {B}_{pq}^{s,u}(\mathbb{R})$
is equivalent to $\mathfrak{C}_{\pm}^{\boldsymbol{\alpha}}(\kappa^*)$ with constants
depending only on the numerical parameters.
\\
{\rm (ii)} Let $\kappa_*\in\mathbb{R}$ and suppose that $0<\mathds{C}$ is
the best constant of the inequality
\begin{equation}\label{4T1more-_F}
\|f\|_{{B}_{pq}^{s-\kappa_*-\boldsymbol{\alpha},w}(\mathbb{R})}\lesssim
\mathds{C}\|I_{\pm}^{\boldsymbol{\alpha}}f\|_{{B}_{pq}^{s,u}(\mathbb{R})}
\end{equation} independent of $f$. Inequality \eqref{4T1more-_F} holds if and only if
$$\mathds{C}(\kappa_*):=\sup_{d\in\mathbb{N}_0}\mathfrak{M}(d,\kappa_*;u,w)<\infty,$$ and
$\mathds{C}\approx \mathds{C}(\kappa_*)$.
In particular, if $w=u$, then
$\mathds{C}\approx \mathds{C}(0)=1$.
\end{theorem}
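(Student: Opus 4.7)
The strategy is to reduce the operator inequality on Besov spaces to a matrix inequality between the sequence spaces $b_{pq}^{s+\kappa^*-\boldsymbol{\alpha}}(v)$ and $b_{pq}^s(u)$ via the decomposition theorems, using for the pre-image of $I_\pm^{\boldsymbol{\alpha}}$ the refined wavelet system from Theorem \ref{main*}. Expanding $f$ in $\widetilde{\mathbf{\Phi}},\widetilde{\mathbf{\Psi}}^{\langle 1,1\rangle}$ (so that $\|f\|_{B_{pq}^{s+\kappa^*-\boldsymbol{\alpha},v}(\mathbb{R})}\approx \|\widetilde{\boldsymbol{\lambda}}^{\langle 1,1\rangle}\|_{b_{pq}^{s+\kappa^*-\boldsymbol{\alpha}}(v)}$) and the image $I_\pm^{\boldsymbol{\alpha}}f$ in $\mathbf{\Phi}^{\langle\boldsymbol{a}\rangle},\mathbf{\Psi}^{\langle\boldsymbol{a}\rangle}$ via Theorem \ref{main'} (so that $\|I_\pm^{\boldsymbol{\alpha}}f\|_{B_{pq}^{s,u}(\mathbb{R})}\approx \|\boldsymbol{\mu}^{\langle\boldsymbol{a}\rangle}\|_{b_{pq}^s(u)}$), recasts the question as the boundedness of the matrix with entries essentially $2^{d/2-d'/2}\langle I_\pm^{\boldsymbol{\alpha}}\widetilde{\mathbf{\Psi}}^{\langle 1,1\rangle}_{(d'-1)\tau'},\mathbf{\Psi}^{\langle\boldsymbol{a}\rangle}_{(d-1)\tau}\rangle$, together with the analogous scaling-function blocks.

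For sufficiency in (i), the key is that $\widetilde{\mathbf{\Psi}}^{\langle 1,1\rangle}$ carries strictly more cancellation than $\widetilde{\mathbf{\Psi}}^{\langle 1,0\rangle}$: its additional moment condition lets one integrate by parts $\boldsymbol{\alpha}$ times against the Riemann--Liouville kernel and conclude that $I_\pm^{\boldsymbol{\alpha}}\widetilde{\mathbf{\Psi}}^{\langle 1,1\rangle}_{(d-1)\tau}$ is concentrated on $\mathrm{supp}\,\widetilde{\mathbf{\Psi}}^{\langle 1,1\rangle}_{(d-1)\tau}$ (modulo a rapidly decaying tail, summable against any $\mathscr{A}_\infty^{\rm loc}$ weight by \eqref{dpII'}), with $L^p(u)$-mass of order $2^{-d\boldsymbol{\alpha}}[u(Q_{(d-1)\tau})]^{1/p}$. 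Together with the matching dual bound on the pre-image side this produces the single-cube factor $\mathfrak{M}(d,\kappa^*;v,u)$ at each level $d\ge 1$, replacing the shifted weighted sums appearing in $\mathscr{M}_\pm^{\boldsymbol{\alpha}}(d,\kappa^*,\varepsilon)$ from Theorem \ref{S5T1}. The level $d=0$ contribution is not affected by the wavelet refinement and retains the terms $\mathbf{M}_\pm^{\boldsymbol{\alpha}}(0)+\mathbf{M}_\pm^{\boldsymbol{\alpha}}(1)$ from Theorem \ref{S5T1}(i).

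For necessity in (i), I would insert into the assumed operator inequality an atomic test function $f:=\widetilde{\mathbf{\Psi}}^{\langle 1,1\rangle}_{(d-1)\tau}$ scaled so that $\|f\|_{B_{pq}^{s+\kappa^*-\boldsymbol{\alpha},v}(\mathbb{R})}\approx 2^{d(s+\kappa^*-\boldsymbol{\alpha})}[v(Q_{(d-1)\tau})]^{1/p}$. Reading off a single wavelet coefficient of $I_\pm^{\boldsymbol{\alpha}}f$ centered at $Q_{(d-1)\tau}$ (bounded below by $2^{-d\boldsymbol{\alpha}}$ times a universal constant by direct computation on the B-spline primitive) gives $\|I_\pm^{\boldsymbol{\alpha}}f\|_{B_{pq}^{s,u}(\mathbb{R})}\gtrsim 2^{ds}\cdot 2^{-d\boldsymbol{\alpha}}[u(Q_{(d-1)\tau})]^{1/p}$, and rearranging produces $\mathfrak{M}(d,\kappa^*;v,u)\lesssim \|I_\pm^{\boldsymbol{\alpha}}\|$ uniformly in $d,\tau$. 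The necessity of $\mathbf{M}_\pm^{\boldsymbol{\alpha}}(0),\mathbf{M}_\pm^{\boldsymbol{\alpha}}(1)$ is inherited from the $d=0$ argument already valid in the setting of Theorem \ref{S5T1}, applied to simple B-spline atoms of scale one.

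Part (ii) runs the same programme with the roles reversed: the image $I_\pm^{\boldsymbol{\alpha}}f\in B_{pq}^{s,u}$ is expanded in the improved wavelet $\widetilde{\mathbf{\Psi}}^{\langle 1,1\rangle}$ while $f\in B_{pq}^{s-\kappa_*-\boldsymbol{\alpha},w}$ is expanded in the standard system, so that the matrix entries involve $\langle I_\pm^{-\boldsymbol{\alpha}}\widetilde{\mathbf{\Psi}}^{\langle 1,1\rangle}_{(d-1)\tau},\mathbf{\Psi}^{\langle\boldsymbol{a}\rangle}_{(d'-1)\tau'}\rangle$; the improved cancellation again collapses these to the single-cube comparison $2^{-d\kappa_*}[u(Q_{(d-1)\tau})]^{-1/p}[w(Q_{(d-1)\tau})]^{1/p}$, and necessity follows from the same atomic test function now placed on the image side. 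In the special case $w=u$, $\kappa_*=0$ one has $\mathfrak{M}(d,0;u,u)\equiv 1$, hence $\mathds{C}(0)=1$, which recovers Theorem \ref{S5T1}(iii). The principal technical obstacle throughout is the off-diagonal matrix decay: one must verify that $\langle I_\pm^{\boldsymbol{\alpha}}\widetilde{\mathbf{\Psi}}^{\langle 1,1\rangle}_{(d'-1)\tau'},\mathbf{\Psi}^{\langle\boldsymbol{a}\rangle}_{(d-1)\tau}\rangle$ decays sufficiently fast as $|d-d'|$ or $|\tau-\tau'|$ grows, which requires repeated integration by parts exploiting the vanishing moments of both wavelet components together with the doubling properties \eqref{dpI'}--\eqref{dpII'}, and it is exactly here that the "$\langle 1,1\rangle$" configuration yields a strictly finer estimate than the "$\langle 1,0\rangle$" setup of Theorem \ref{S5T1}.
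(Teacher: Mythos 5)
Your overall instinct — exploit the extra vanishing moments of $\widetilde{\mathbf{\Psi}}^{\langle 1,1\rangle}$ against the Riemann--Liouville kernel, reduce to sequence spaces, and test with localized splines for necessity — is the right one, but the proposal has three concrete gaps. First, you place the refined system on the wrong side. You expand the \emph{pre-image} $f$ in $\widetilde{\mathbf{\Psi}}^{\langle 1,1\rangle}$ and the image in the standard system, which turns the problem into a genuine bi-infinite matrix bound whose cross-level almost-orthogonality you explicitly leave unverified ("the principal technical obstacle"). The paper instead expands the \emph{image} $I_{\pm}^{\boldsymbol{\alpha}}f$ in the $\langle 1,1\rangle$ system (Theorem \ref{main*}(i)): the extra factor in $\widetilde{\mathbf{\Psi}}^{\langle 1,1\rangle}$ is, by \eqref{dopka24} and \eqref{diff}, exactly $2^{-d\boldsymbol{\alpha}}B_{m^*}^{(\boldsymbol{\alpha})}$ with $m^*=n^*+\boldsymbol{\alpha}$, so integration by parts gives the \emph{exact identity} \eqref{18-12-1}: the level-$d$ coefficient of $I_{+}^{\boldsymbol{\alpha}}f$ equals $\Gamma(\boldsymbol{\alpha})2^{-d\boldsymbol{\alpha}}$ times the level-$d$ coefficient of $f$ in a bona fide order-$m^*$ system $\{\bar{\mathbf{\Phi}},\bar{\mathbf{\Psi}}^{\langle 1,0\rangle}\}$ on the same cube $Q_{(d-1)\tau}$. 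There is no off-diagonal matrix and no "rapidly decaying tail" to sum; the single-cube factor $\mathfrak{M}(d,\kappa^*;v,u)$ falls out immediately, and Theorem \ref{main} identifies the resulting sequential norm with $\|f\|_{B_{pq}^{s+\kappa^*-\boldsymbol{\alpha},v}(\mathbb{R})}$. As written, your sufficiency argument is not complete without proving the almost-orthogonality estimates you defer.

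Second, the necessity of $\mathbf{M}_{\pm}^{\boldsymbol{\alpha}}(0)$ and $\mathbf{M}_{\pm}^{\boldsymbol{\alpha}}(1)$ cannot be "inherited from the $d=0$ argument already valid in the setting of Theorem \ref{S5T1}": that theorem is a sufficiency statement only, and a single scale-one B-spline atom cannot detect a constant of the form $\sup_\tau\bigl(\sum_{r\ge\tau}u(Q_{0r})\bigr)^{1/p}\bigl(\sum_{r\le\tau}(\tau-r+1)^{p'(\boldsymbol{\alpha}-1)}[v(Q_{0r})]^{1-p'}\bigr)^{1/p'}$, which is a two-sided discrete Hardy constant. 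The paper proves this by restricting to finite combinations of shifts of $B_{m^*}$, deriving the discrete inequality \eqref{dopka11-0}, inserting the extremal sequences $f_R^*$ of \eqref{function*} (and the dual $g_R^*$), and using the lower bound $A_r(\boldsymbol{\alpha})\gtrsim(1+r)^{\boldsymbol{\alpha}-1}$ for the iterated-difference coefficients together with the semi-orthogonality computation \eqref{sm}; none of this machinery appears in your proposal. Third, for part (ii) sufficiency the paper does not run a reversed matrix argument at all: it simply combines the unconditional bound of Theorem \ref{S5T1}(iii) with the known embedding criterion for $B_{pq}^{s-\boldsymbol{\alpha},u}(\mathbb{R})\hookrightarrow B_{pq}^{s-\kappa_*-\boldsymbol{\alpha},w}(\mathbb{R})$, whose characterization is exactly $\sup_{d}\mathfrak{M}(d,\kappa_*;u,w)<\infty$. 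Your single-atom test for the necessity of $\sup_d\mathfrak{M}$ is in the right spirit, but note the paper must choose the shift $\bar{\boldsymbol{s}}$ in \eqref{fu} so that the relevant inner product reduces to the nondegenerate $\mathbf{B}_{n^*}$-intersection \eqref{18-12-2}; a lower bound "by direct computation" is not automatic, since the pairing of the two wavelets could otherwise vanish.
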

\renewcommand{\proofname}{Proof}
\begin{proof} (i) {\it Sufficiency.}
Let us turn to the beginning of the proof of assertion (i) from 
\cite[Theorem 5.5]{Ujms} in the sufficiency part and show the validity of
$$\|I_{+}^{\boldsymbol{\alpha}}\|_{{B}_{pq}^{s+
\kappa^*-\boldsymbol{\alpha},v}(\mathbb{R})\to
{B}_{pq}^{s,u}(\mathbb{R})}\lesssim \mathfrak{C}_{+}^{\boldsymbol{\alpha}}(\kappa^*).$$
To do this, we choose $n^\ast\in\mathbb{N}$ satisfying the condition \eqref{condB} with respect
to parameters of the space $B^{s,u}_{pq}(\mathbb{R})$ including $r_u$ (see \eqref{r_w}). Besides,
we set $m^\ast:= n^\ast+\boldsymbol{\alpha}$, which must also satisfy \eqref{condB} 
with respect to parameters of the space 
$B^{s+\kappa^*-\boldsymbol{\alpha},v}_{pq}( \mathbb{R})$ and $r_v$.
Then, by \eqref{isom'*} (see Theorem \ref{main*}(i)),
$$
\|I_{+}^{\boldsymbol{\alpha}}f\|_{B^{s,u}_{pq}(\mathbb{R})}\approx
\|\widetilde{\boldsymbol{\lambda}}^{\langle 1,1\rangle}\|_{b_{pq}^{s}(u)},$$
where
$\widetilde{\boldsymbol{\lambda}}^{\langle 1,1\rangle}$
is defined by \eqref{asty'*} with $I_{+}^{\boldsymbol{\alpha}}f$ instead of $f$. 
Besides, $\widetilde{\boldsymbol{\Phi}}$ is given by $B_{n*}$, 
and $\widetilde{\boldsymbol{\Psi}}^{\langle 1,1\rangle}$ (see \eqref{vazhnoo}) 
is specified by $\Psi_{n^\ast,0,0;m^\ast(1),\boldsymbol{\alpha}(1)}$.
This means that we can write:
\begin{equation}\label{rysha+U}\|I_{+}^{\boldsymbol{\alpha}}f\|_{B^{s,u}_{pq}(\mathbb{R})}
\approx 
\Bigl\|\sum_{\tau\in\mathbb{Z}}
|\widetilde{\boldsymbol{\lambda}}_{0\tau}|\chi_{Q_{0\tau}}\Bigr\|_{L^p(\mathbb{R},u)}+
\biggl(\sum_{d\in\mathbb{N}}2^{qd s}\Bigl\|\sum_{\tau\in\mathbb{Z}}
|\widetilde{\boldsymbol{\lambda}}_{d\tau}^{\langle1,1\rangle}|\chi_{Q_{(d-1)\tau}}
\Bigr\|_{L^p(\mathbb{R},u)}^q\biggr)^{1/q},\end{equation} where
\begin{equation*}
\widetilde{\boldsymbol{\lambda}}_{0\tau}=\langle I_{+}^{\boldsymbol{\alpha}}f,
\widetilde{\mathbf{\Phi}}_{\tau}\rangle\quad(\tau\in\mathbb{Z}),\qquad   
\widetilde{\boldsymbol{\lambda}}_{d\tau}^{\langle1,1\rangle}=2^{d/2}\langle 
I_{+}^{\boldsymbol{\alpha}}f,\widetilde{\mathbf{\Psi}}^{\langle1,1\rangle}_{(d-1)\tau}
\rangle\quad(d\in\mathbb{N},\,\tau\in\mathbb{Z}).\end{equation*}

The first term on the right--hand side of \eqref{rysha+U} can be estimated by repeating
a similar stage of the proof of assertion (i) from \cite[Theorem 5.5]{Ujms}.
We present it here in a slightly modified form, by relying on
the differential property \eqref{diff} of the basic splines $B_n$. 

As in the proof of \cite[Theorem 5.5]{Ujms}, we denote $\widetilde{\mathbf{\Phi}}_\tau(\cdot)=
B_{n^*}(\cdot-\tau)=:P(\tau)$ and set  
\begin{equation*} 
R(\tau-r):=\sum_{j=0}^{\boldsymbol{\alpha}}(-1)^j
\binom{\boldsymbol{\alpha}}{j}P(\tau-r-j).
\end{equation*}
It is true that
\begin{equation}\label{RP1}
P(\tau-r)=R(\tau-r)-\sum_{j=1}^{\boldsymbol{\alpha}}(-1)^j\binom{\boldsymbol{\alpha}}{j}P(\tau-r-j).
\end{equation} Then
\begin{equation*}
P(\tau)=R(\tau)-\sum_{j=1}^{\boldsymbol{\alpha}}(-1)^j\binom{\boldsymbol{\alpha}}{j}P(\tau-j)=R(\tau)+
\binom{\boldsymbol{\alpha}}{1}P(\tau-1)- \sum_{j=2}^{\boldsymbol{\alpha}}(-1)^j
\binom{\boldsymbol{\alpha}}{j}P(\tau-j).
\end{equation*} Again on the strength of \eqref{RP1},
\begin{multline*}
P(\tau)-R(\tau)=\binom{\boldsymbol{\alpha}}{1}\biggl[R(\tau-1)-\sum_{j=1}^{\boldsymbol{\alpha}}(-1)^j
\binom{\boldsymbol{\alpha}}{j}P(\tau-1-j)\biggr]-\sum_{j=2}^{\boldsymbol{\alpha}}(-1)^j
\binom{\boldsymbol{\alpha}}{j}P(\tau-j)\\=
\binom{\boldsymbol{\alpha}}{1}R(\tau-1)-\binom{\boldsymbol{\alpha}}{1}
\sum_{j=1}^{\boldsymbol{\alpha}}(-1)^j\binom{\boldsymbol{\alpha}}{j}P(\tau-1-j)-
\sum_{j=2}^{\boldsymbol{\alpha}}(-1)^j\binom{\boldsymbol{\alpha}}{j}P(\tau-j)=
\binom{\boldsymbol{\alpha}}{1}R(\tau-1)\\+\biggl[{\binom{\boldsymbol{\alpha}}{1}}^2-
\binom{\boldsymbol{\alpha}}{2}\biggr]P(\tau-2)
-\binom{\boldsymbol{\alpha}}{1}\sum_{j=2}^{\boldsymbol{\alpha}}(-1)^j\binom{\boldsymbol{\alpha}}{j}P(\tau-1-j)-
\sum_{j=3}^{\boldsymbol{\alpha}}(-1)^j\binom{\boldsymbol{\alpha}}{j}P(\tau-j).
\end{multline*} Setting $A_0:=\binom{\boldsymbol{\alpha}}{0}=1$, $A_1:=A_0
\binom{\boldsymbol{\alpha}}{1}$, $A_2:=A_1\binom{\boldsymbol{ \alpha}}{1}-
A_0\binom{\boldsymbol{\alpha}}{2}
=\binom{\boldsymbol{\alpha}}{1}^2-\binom{\boldsymbol{\alpha}}{2}$,
we continue the procedure for forming finite iterated differences as shown below
\begin{multline*}
P(\tau)=A_0R(\tau)+A_1R(\tau-1)+A_2\biggl[R(\tau-2)-\sum_{j=1}^{\boldsymbol{\alpha}}(-1)^j
\binom{\boldsymbol{\alpha}}{j}P(\tau-2-j)\biggr]\\
-\binom{\boldsymbol{\alpha}}{1}\sum_{j=2}^{\boldsymbol{\alpha}}(-1)^j
\binom{\boldsymbol{\alpha}}{j}P(\tau-1-j)-\sum_{j=3}^{\boldsymbol{\alpha}}(-1)^j
\binom{\boldsymbol{\alpha}}{j}P(\tau-j)\\
=A_0R(\tau)+A_1R(\tau-1)+A_2R(\tau-2)-\biggl[\underbrace{A_2\binom{\boldsymbol{\alpha}}{1}-A_1
\binom{\boldsymbol{\alpha}}{2}+A_0\binom{\boldsymbol{\alpha}}{3}}_{:=A_3}\biggr]P(\tau-3)\\
-A_2\sum_{j=2}^{\boldsymbol{\alpha}}(-1)^j\binom{\boldsymbol{\alpha}}{j}P(\tau-2-j)
-\binom{\boldsymbol{\alpha}}{1}\sum_{j=3}^{\boldsymbol{\alpha}}(-1)^j
\binom{\boldsymbol{\alpha}}{j}P(\tau-1-j)-\sum_{j=4}^{\boldsymbol{\alpha}}(-1)^j
\binom{\boldsymbol{\alpha}}{j}P(\tau-j),
\end{multline*} which is resulting in the sum of the form
\begin{equation*}
P(\tau)=\sum_{r\ge 0} A_rR(\tau-r),\qquad \textrm{where}\quad A_r=A_r(\boldsymbol{\alpha})=
\sum_{j=1}^r(-1)^{j-1}A_{r-j}\binom{\boldsymbol{\alpha}}{j}.\end{equation*}
In \cite{Ujms}, it was established that
$A_r(\boldsymbol{\alpha})\le (1+r)^{\boldsymbol{\alpha}-1}$ (see proof of (5.18) in \cite{Ujms}).

Similar procedure of forming differences can be done by "moving to the right side"\, and
assuming
\begin{equation*} 
R^*(\tau+r):=\sum_{j=0}^{\boldsymbol{\alpha}}(-1)^j
\binom{\boldsymbol{\alpha}}{j}P(\tau+r+j).
\end{equation*} Then
\begin{equation*}\label{RP1*}
P(\tau)=\sum_{r\ge 0} A_rR^*(\tau+r),\qquad \textrm{where}\quad R(\tau+r)^*=B_{m^*}^{(\boldsymbol{\alpha})}
(\cdot-\tau-r).\end{equation*}

Since we have $f\in L^1(-\infty,x)$ for any $x\in\mathbb{R}$,
also $${\rm supp}\,B_{m^*}^{(\boldsymbol{\alpha})}(\cdot-\tau+r+\boldsymbol{\alpha})=
{\rm supp}\,B_{m^*,\tau-r-\boldsymbol{\alpha}}^{(\boldsymbol{\alpha})}(\cdot)=
[\tau-r-\boldsymbol{\alpha},\tau-r+n^*+1]$$ and $B_{m^*,\tau-r-\boldsymbol{\alpha}}^{(k)}=0$ 
at the points
$\tau-r-\boldsymbol{\alpha}$ and $\tau-r+n^*+1$ for all $k=0,\ldots,\boldsymbol{\alpha}$
(see \S\,\ref{dopol}), then
\begin{equation}\label{18-12-1}
\int_{\mathbb{R}}
I_{+}^{\boldsymbol{\alpha}}f(x)\,
B_{m^*}^{(\boldsymbol{\alpha})}(x-\tau-r)\,dx=(-1)^{\boldsymbol{\alpha}}\int_{\mathbb{R}}f(y)
\,B_{m^*}(y-\tau+r+\boldsymbol{\alpha})\,dy.\end{equation}
Hence and with respect to the equality
$R(\tau-r)=(-1)^{\boldsymbol{\alpha}}B_{m^*}^{(\boldsymbol{\alpha})}
(\cdot-\tau+r+\boldsymbol{\alpha})$ (see \eqref{diff}), the first term in
\eqref{rysha+U} can be estimated as follows:
\begin{align*}
\Bigl\|\sum_{\tau\in\mathbb{Z}}
|\widetilde{\boldsymbol{\lambda}}_{0\tau}|\chi_{Q_{0\tau}}\Bigr\|_{L^p(\mathbb{R},u)}^p=&
\sum_{\tau\in\mathbb{Z}}u(Q_{0\tau})\bigl|\langle I_{+}^{\boldsymbol{\alpha}}f,
\widetilde{\mathbf{\Phi}}_{\tau}\rangle\bigr|^p\\
=&\sum_{\tau\in\mathbb{Z}}u(Q_{0\tau})\biggl[\Bigl|\sum_{r\ge 0}
A_r\int_{\mathbb{R}}
I_{+}^{\boldsymbol{\alpha}}f(x)\,
B_{m^*}^{(\boldsymbol{\alpha})}(x-\tau+r+\boldsymbol{\alpha})\,dx\Bigr|\biggr]^p\\
=&\Gamma(\boldsymbol{\alpha})\sum_{\tau\in\mathbb{Z}}u(Q_{0\tau})\biggl[\Bigl|\sum_{r\ge 0}
A_r\int_{\mathbb{R}}{f(y)}B_{m^*}
(y-\tau+r+\boldsymbol{\alpha})\,dy\Bigr|\biggr]^p\\
\lesssim& \sum_{\tau\in\mathbb{Z}}u(Q_{0\tau})\biggl[\sum_{r\ge 0} 
(r+1)^{\boldsymbol{\alpha}-1}\Bigl|\int_{\mathbb{R}}{f(y)}B_{m^*}
(y-\tau+r+\boldsymbol{\alpha})\,dy\Bigr|
\biggr]^p\\
=&\sum_{\tau\in\mathbb{Z}}u(Q_{0\tau})\biggl[\sum_{l\le\tau} 
(\tau-l+1)^{\boldsymbol{\alpha}-1}\Bigl|\int_{\mathbb{R}}{f(y)}B_{m^*}
(y+\boldsymbol{\alpha}-l)\,dy\Bigr|\biggr]^p
.\end{align*} Further, by virtue of \eqref{estN1}, 
\begin{equation}\label{06_07_03}\sum_{\tau\in\mathbb{Z}}u(Q_{0\tau})
\bigl|\langle I_{+}^{\boldsymbol{\alpha}}f,\widetilde{\mathbf{\Phi}}_{\tau}\rangle\bigr|^p
\lesssim [\mathbf{M}_{+}^{\boldsymbol{\alpha}}(1)+\mathbf{M}_{+}^{\boldsymbol{\alpha}}(0)]^p
\sum_{\tau\in\mathbb{Z}}v(Q_{0\tau})\biggl|\int_{\mathbb{R}}{f(y)}
B_{m^*}(y+\boldsymbol{\alpha}-\tau)\,dy\biggr|^p
\end{equation}  (see \cite[Theorem 5.1]{Ujms} or \cite[Theorems 1.6 \& 1.11]{PSU}). 
Using \eqref{dpI'} and relying on Lemma \ref{basa}, we derive an estimate of the form
\begin{equation*}
v(Q_{0\tau})=\int_{Q_{0\tau}}v\le\int_{\cup_{i=\tau-\boldsymbol{\alpha}}^\tau Q_{0i}}v\le 
\boldsymbol{\alpha}^{r_v}\int_{Q_{0(\tau-\boldsymbol{\alpha})}}v.\end{equation*} 
Then, by substituting $r=\tau-\boldsymbol{\alpha}$, we can transform \eqref{06_07_03} to
\begin{align*}
\Bigl\|\sum_{\tau\in\mathbb{Z}} |\lambda_{0\tau}|\chi_{Q_{0\tau}}\Bigr\|_{L^p(\mathbb{R},u)}&=
\biggl(\sum_{\tau\in\mathbb{Z}}u(Q_{0\tau})\bigl|\langle I_{+}^{\boldsymbol{\alpha}}f,
\widetilde{\mathbf{\Phi}}_{\tau}\rangle\bigr|^p
\biggr)^{1/p}\\&
\lesssim \mathfrak{C}_{+}^{\boldsymbol{\alpha}}(\kappa^*)\biggl(\sum_{r\in\mathbb{Z}}v(Q_{0r})
\biggl|\int_{\mathbb{R}}{f(y)}B_{m^*}(y-r)\,dy\biggr|^p\biggr)^{1/p}
.\end{align*}
We set $\int_{\mathbb{R}}{f(y)}B_{m^*}(y-\tau)\,dy=:\langle f,
{\bar{\mathbf{\Phi}}} _{\tau}\rangle$ and name ${\bar{\mathbf{\Phi}}}$ as the first component
of a new wavelet system forming spline basis of order $m^*$. 
The second component of this system, as well as the elements it generates,
will be denoted by ${\bar{\mathbf{\Psi}}}^{\langle 1,0\rangle}$ and
${\bar{\mathbf{\Psi}}}_{( d-1)\tau}^{\langle 1,0\rangle}$, respectively. \label{est1sys}

The next step is to estimate $\widetilde{\boldsymbol{\lambda}}_{d\tau}^{\langle 1,1\rangle}$
for $d>0$ on the right--hand side
\eqref{rysha+U} via $\langle f,{\bar{\mathbf{\Psi}}}_{(d-1)\tau}^{\langle 1,0\rangle}\rangle$.
To do this, we write out an exact expression for the element
$\widetilde{\mathbf{\Psi}}^{\langle1,1\rangle}_{ (d-1)\tau}$,
based on \eqref{ksi*} and \eqref{formulll}:
\begin{align*}\label{formul}2^{-d/2}\gamma_{n^*}\mathbf{\Lambda}_{n^*}
\widetilde{\mathbf{\Psi}}^{\langle1,1\rangle}_{(d-1)\tau}(\cdot)=&
\gamma_{n^*}{\Psi}_{n^*,0,0;m^*(1),\boldsymbol{\alpha}(1)}(2^{d-1}\cdot-\tau)
\nonumber\\=&\sum_{|l|\le m^*}\lambda_{|l|}(m^*)(-1)^{|l|}
\sum_{|s|\le n^*}\lambda_{|s|}(n^*)(-1)^{|s|}\nonumber\\&\times
\sum_{k= 0}^{2\boldsymbol{\alpha}+n^*+1}(-1)^k\binom{1+m^*+2\boldsymbol{\alpha}}{k}\ 
B_{n^*}(2^{d}\cdot-2\tau +(n^*-k)-s-l).\end{align*} 
We will also need the Zhu--Vandermonde identity \cite[p. 15]{SKM}
\begin{equation}\label{ChVan}
\binom{r+s}{k}=\sum_{n=0}^k\binom{r}{n}\binom{s}{k-n},\qquad \textrm{where}
\quad r,s\in\mathbb{R}\quad \textrm{and}\quad k\in\mathbb{N},\end{equation}
from which it follows for $r=1+m^*$ and $s=\boldsymbol{\alpha}$ that
\begin{multline*}
\sum_{k= 0}^{2\boldsymbol{\alpha}+n^*+1}(-1)^k\binom{1+n^*+2\boldsymbol{\alpha}}{k} 
B_{n^*}(2^{d}x-2\tau +(m^*-k)-s-l)\\=
\sum_{k= 0}^{2\boldsymbol{\alpha}+n^*+1}(-1)^k\sum_{r=0}^k
\binom{1+m^*}{r}\binom{\boldsymbol{\alpha}}{k-r} 
B_{n^*}(2^{d}x-2\tau +(m^*-k)-s-l)\\=
\sum_{r= 0}^{m^*+1}(-1)^r\binom{1+m^*}{r}\sum_{m=0}^{\boldsymbol{\alpha}}(-1)^m
\binom{\boldsymbol{\alpha}}{m} 
B_{n^*}(2^{d}x-2\tau +(m^*-r-m)-s-l).
\end{multline*} Observe that
\begin{multline}\label{dopka24}
\sum_{m=0}^{\boldsymbol{\alpha}}(-1)^m\binom{\boldsymbol{\alpha}}{m} 
B_{n^*}(2^{d}x-2\tau +(m^*-r-m)-s-l)\\=
2^{-d\boldsymbol{\alpha}}B_{m^*}^{(\boldsymbol{\alpha})}(2^{d}x-2\tau +(m^*-r)-s-l).
\end{multline}
Also, 
\begin{align*}{\rm supp}\,B_{m^*}^{(\boldsymbol{\alpha})}(2^{d}x-2\tau +(m^*-r)-s-l)&=:
{\rm supp}\,B_{m^*,2\tau+s+l+r-m^*}^{(\boldsymbol{\alpha})}(2^{d}\cdot)
\\&=\left[\frac{2\tau+s+l+r-m^*}{2^d}, 
\frac{2\tau+s+l+r+1}{2^d}\right]\end{align*} and
$B_{m^*,2\tau+s+l+r-m^*}^{(\boldsymbol{\alpha})}(2^{d}\cdot)=0$
at the points $2^{-d}(2\tau+s+l+r-m^*)$ and $2^{-d}(2\tau+s+l+r+1)$ 
for all $k=0,\ldots,\boldsymbol{\alpha}$.
Therefore, and in view of integrability of $f$ on semiaxes $(-\infty,x)$ for any $x$,
\begin{multline*}
\int_{\mathbb{R}}\biggl( \int_{-\infty}^{x}\frac{{f(y)}\,dy}{(x-y)^{{1-\boldsymbol{\alpha}}}}
\biggr)\sum_{m=0}^{\boldsymbol{\alpha}}(-1)^m
\binom{\boldsymbol{\alpha}}{m} 
B_{n^*}(2^{d}x-2\tau +(m^*-r-m)-s-l)\\=
\int_{\frac{2\tau+s+l+r+m-m^*}{2^d}}
^{\frac{2\tau+s+l+r+m+1}{2^d}}
\biggl( \int_{-\infty}^{x}\frac{{f(y)}\,dy}{(x-y)^{{1-\boldsymbol{\alpha}}}}
\biggr)B_{m^*}^{(\boldsymbol{\alpha})}(2^{d}x-2\tau +(m^*-r)-s-l)\\
 =\int_{-\infty}^{\frac{2\tau+s+l+r+m-m^*}{2^d}}f(y)
\biggl(\int_{\frac{2\tau+s+l+r+m-m^*}{2^d}}
^{\frac{2\tau+s+l+r+m+1}{2^d}}
\frac{B_{m^*}^{(\boldsymbol{\alpha})}(2^{d}x-2\tau +(m^*-r)-s-l)\,dx}
{(x-y)^{{1-\boldsymbol{\alpha}}}}\biggr)dy\\
+\int_{\frac{2\tau+s+l+r+m-m^*}{2^d}}
^{\frac{2\tau+s+l+r+m+1}{2^d}}f(y)
\biggl(\int_{\frac{2\tau+s+l+r+m-m^*}{2^d}}
^{y}\frac{B_{m^*}^{(\boldsymbol{\alpha})}(2^{d}x-2\tau +(m^*-r)-s-l)\,dx}
{(x-y)^{{1-\boldsymbol{\alpha}}}}\biggr)dy\\
=\frac{\Gamma(\boldsymbol{\alpha})}{2^{d\boldsymbol{\alpha}}}
\int_{\frac{2\tau+s+l+r-m^*}{2^d}}
^{\frac{2\tau+s+l+r+m+1}{2^d}}f(y)
B_{m^*}(2^{d}y-2\tau +(m^*-r)-s-l).\end{multline*} 
From this we obtain that
\begin{multline*}\label{kra1}\frac{{{2^{d\boldsymbol{\alpha}}}}\mathbf{\Lambda}_{n^*}}
{2^{d/2}\Gamma(\boldsymbol{\alpha})}\langle I_{+}^{\boldsymbol{\alpha}}f,
\widetilde{\mathbf{\Psi}}^{\langle1,1\rangle}_{(d-1)\tau}\rangle=
\frac{{{2^{d\boldsymbol{\alpha}}}\gamma_{n^*}}}{\Gamma(\boldsymbol{\alpha})
}
\langle I_{+}^{\boldsymbol{\alpha}}f,{\Psi}_{n^*,0,0;m^*(1),\boldsymbol{\alpha}(1)}(2^{d-1}
\cdot-\tau)\rangle\\=
\sum_{|l|\le m^*}\lambda_{|l|}(m^*)(-1)^{|l|}
\sum_{|s|\le n^*}\lambda_{|s|}(n^*)(-1)^{|s|}
\sum_{r= 0}^{m^*+1}(-1)^r\binom{1+m^*}{r}\\\times
\int_{\mathbb{R}}{f(y)}B_{m^*}(2^{d}y-2\tau +(m^*-r)-s-l)\,dy=
\frac{\gamma_{m^*}\mathbf{\Lambda}_{m^*}}{2^{d/2}}
\langle f,
{\bar{\mathbf{\Psi}}}^{\langle1,0\rangle}_{(d-1)\tau}\rangle,\end{multline*}
where ${\bar{\mathbf{\Psi}}}^{\langle 1,0\rangle}$ is specified by 
$\Psi_{m^*,0,0;n^*(1),\boldsymbol{\alpha}(0)}$, and we can write that
\begin{equation*}
2^{pd\boldsymbol{\alpha}}\sum_{\tau\in\mathbb{Z}}u(Q_{(d-1)\tau})\bigl|
\langle I_{+}^{\boldsymbol{\alpha}}f,\widetilde{\mathbf{\Psi}}_{(d-1)\tau}^{\langle 
1,1\rangle}\rangle\bigr|^p
=\sum_{\tau\in\mathbb{Z}}u(Q_{(d-1)\tau})
\biggl|\int_{\mathbb{R}}{f(y)}
{\bar{\mathbf{\Psi}}}^{\langle1,0\rangle}_{(d-1)\tau}(y)\,dy
\biggr|^p.\end{equation*} 
Applying the condition \eqref{estN1}, we obtain 
\begin{multline*}\label{Proof2'}2^{-d/2}2^{d(-\kappa^*+\boldsymbol{\alpha})}\Bigl\|\sum_{\tau\in\mathbb{Z}} |
\widetilde{\boldsymbol{\lambda}}_{d\tau}^{\langle 1,1\rangle}|\chi_{Q_{(d-1)\tau}}
\Bigr\|_{L^p(\mathbb{R},u)}=
2^{d(-\kappa^*+\boldsymbol{\alpha})}\biggl(\sum_{\tau\in\mathbb{Z}}u(Q_{(d-1)\tau})
\bigl|\langle I_{+}^{\boldsymbol{\alpha}}f,\widetilde{\mathbf{\Psi}}_{(d-1)\tau}^{\langle
1,1\rangle}\rangle\bigr|^p
\biggr)^{1/p}\\\simeq 2^{-d\kappa^*}
\biggl(\sum_{\tau\in\mathbb{Z}}u(Q_{(d-1)\tau})
\bigl|\langle f,
{\bar{\mathbf{\Psi}}}^{\langle1,0\rangle}_{(d-1)\tau}\rangle\bigr|
^p \biggr)^{1/p}\\
\lesssim \sup_{d\in\mathbb{N}}\mathfrak{M}(d,\kappa^*;v,u)
\biggl(\sum_{\tau\in\mathbb{Z}}v(Q_{(d-1)\tau})\bigl|\langle f,{\bar{\mathbf{\Psi}}}^{\langle
1,0\rangle}_{(d-1)\tau}\rangle\bigr|^p\biggr)^{1/p}\\
\lesssim \mathfrak{C}_+^{\boldsymbol{\alpha}}(\kappa^*)
\biggl(\sum_{\tau\in\mathbb{Z}}v(Q_{(d-1)\tau})\bigl|\langle f,{\bar{\mathbf{\Psi}}}^{\langle
1,0\rangle}_{(d-1)\tau}\rangle\bigr|^p\biggr)^{1/p}
=
\mathfrak{C}_+^{\boldsymbol{\alpha}}(\kappa^*)
\Bigl\|\sum_{\tau\in\mathbb{Z}} \chi_{Q_{(d-1)\tau}}
\bigl|\langle f,{\bar{\mathbf{\Psi}}}^{\langle1,0\rangle}_{(d-1)\tau}\rangle\bigr|
\Bigr\|_{L^p(\mathbb{R},v)}.
\end{multline*}
Thus, ${\bar{\mathbf{\Psi}}}^{\langle1,0\rangle}_{(d-1)\nu}$
together with ${\bar{\mathbf {\Phi}}}_{\nu}$ (see p. \pageref{est1sys})
forms a spline wavelet system of of order $m^*$.
The sequential norms corresponding to this system, by Theorem \ref{main}, are equivalent
to the norm of $f$ in the space
$B_{pq}^{s+\kappa^*-\boldsymbol{\alpha}, v}(\mathbb{R})$. This means that
assertion (i) is proven.

{\it Necessity.} Suppose that the operator $I_{+}^{\boldsymbol{\alpha}}$ is bounded from
${B}_{pq}^{s+\kappa^*-\boldsymbol{\alpha},v}(\mathbb{R})$ with some $\kappa^*\in\mathbb{R}$ to
${B}_{pq}^{s,u}(\mathbb{R})$. Accordingly,
the following inequality holds
\begin{equation}\label{4T1more+BEST}
\|I_{+}^{\boldsymbol{\alpha}}f\|_{{B}_{pq}^{s,u}(\mathbb{R})}\lesssim
\mathfrak{C}\|f\|_{{B}_{pq}^{s+\kappa^*-\boldsymbol{\alpha},v}(\mathbb{R})},
\end{equation} where $\mathfrak{C}>0$ is the best constant in \eqref{4T1more+BEST}, i.e.
$\mathfrak{C}=\|I_{\pm}^{\boldsymbol{\alpha}}\|_{{B}_{pq}^{s+\kappa^*-\boldsymbol{\alpha},v}
(\mathbb{R})\to
{B}_{pq}^{s,u}(\mathbb{R})}$. 
We will show that
\begin{equation}\label{snizu}
\mathfrak{C}\gtrsim \mathfrak{C}_+(\kappa^*)\ge \max\{\mathbf{M}_+^{\boldsymbol{\alpha}}(1),
\mathbf{M}_+^{\boldsymbol{\alpha}}(0),\sup_{d\in\mathbb{N}}\mathfrak{M}(d,\kappa^*;v,u)\}.
\end{equation}   

Let $n^\ast\in\mathbb{N}$ and $m^\ast\in\mathbb{N}$ satisfy the condition \eqref{condB}
with respect to the parameters of the spaces $B^{s,u}_{pq}(\mathbb{R})$ and
$B^{s+\kappa^*-\boldsymbol{\alpha},v}_{pq}( \mathbb{R})$, respectively.
By theorem \ref{main'},
\begin{equation}\label{24_05}
\|I_{+}^{\boldsymbol{\alpha}}f\|_{B^{s,u}_{pq}(\mathbb{R})}\approx
\|\widetilde{\boldsymbol{\lambda}}^{\langle \widetilde{\boldsymbol{a}}\rangle}_{(I_{+}^{\boldsymbol{\alpha}}f)}
\|_{b_{pq}^{s}(u)},\end{equation}
where the sequence
$\widetilde{\boldsymbol{\lambda}}^{\langle \widetilde{\boldsymbol{a}}\rangle}_{(I_{+}^
{\boldsymbol{\alpha}}f)}$
is defined by the elements
\begin{equation}\label{asty'++}
\widetilde{\boldsymbol{\lambda}}_{0\tau}^{\langle \widetilde{\boldsymbol{a}}\rangle}=
\langle I_{+}^{\boldsymbol{\alpha}}f,
\widetilde{\mathbf{\Phi}}_{\tau}^{\langle \widetilde{\boldsymbol{a}}\rangle}\rangle\quad(\tau\in\mathbb{Z}),\qquad   
\widetilde{\boldsymbol{\lambda}}_{d\tau}^{\langle\widetilde{\boldsymbol{a}}\rangle}=2^{d/2}\langle 
I_{+}^{\boldsymbol{\alpha}}f,\widetilde{\mathbf{\Psi}}^{\langle\widetilde{\boldsymbol{a}}\rangle}_{(d-1)\tau}
\rangle\quad(d\in\mathbb{N},\,\tau\in\mathbb{Z}).\end{equation}
Here, $\widetilde{\boldsymbol{\Phi}}^{\langle\widetilde{\boldsymbol{a}}\rangle}$
are defined by $B_{n*,\widetilde{\boldsymbol{a}}}$
and $\widetilde{\boldsymbol{\Psi}}^{\langle \widetilde{\boldsymbol{a}}\rangle}$
(see \eqref{vazhnoo}) --- via
$\Psi_{n^\ast,\widetilde{\boldsymbol{a}},0}$, where
$\widetilde{\boldsymbol{a}}$ can take values $0$ or $\pm 1/2$.
Based on the above, we can write:
\begin{equation}\label{rysha+}\|I_{+}^{\boldsymbol{\alpha}}f\|_{B^{s,u}_{pq}(\mathbb{R})}
\approx 
\Bigl\|\sum_{\tau\in\mathbb{Z}}
|\widetilde{\boldsymbol{\lambda}}^{\langle \widetilde{\boldsymbol{a}}\rangle}_{0\tau}|\chi_{Q_{0\tau}}\Bigr\|_{L^p(\mathbb{R},u)}+
\biggl(\sum_{d\in\mathbb{N}}2^{qd s}\Bigl\|\sum_{\tau\in\mathbb{Z}}
|\widetilde{\boldsymbol{\lambda}}_{d\tau}^{\langle\widetilde{\boldsymbol{a}}\rangle}|\chi_{Q_{(d-1)\tau}}
\Bigr\|_{L^p(\mathbb{R},u)}^q\biggr)^{1/q}.\end{equation}
Moreover, also based on Theorem \ref{main'},
$$
\|f\|_{{B}_{pq}^{s+\kappa^*-\boldsymbol{\alpha},v}(\mathbb{R})}\approx
\|{\boldsymbol{\lambda}}^{\langle \boldsymbol{a}\rangle}_f\|_{b_{pq}^{s+\kappa^*-\boldsymbol{\alpha}}(v)},$$
where the sequence ${\boldsymbol{\lambda}}^{\langle \boldsymbol{a}\rangle}_f$ with 
$\boldsymbol{a}
\in\{0,\pm1/2\}$ is defined by elements 
\begin{equation}\label{asty'25}
{\boldsymbol{\lambda}}_{0\tau}^{\langle \boldsymbol{a}\rangle}=\langle f,
{\mathbf{\Phi}}_{\tau}^{\langle \boldsymbol{a}\rangle}
\rangle\quad(\tau\in\mathbb{Z}),\qquad   
{\boldsymbol{\lambda}}_{d\tau}^{\langle \boldsymbol{a}\rangle}=2^{d/2}\langle 
f,{\mathbf{\Psi}}^{\langle \boldsymbol{a}\rangle}_{(d-1)\tau}
\rangle\quad(d\in\mathbb{N},\,\tau\in\mathbb{Z}).\end{equation}
In this case, ${\boldsymbol{\Phi}}^{\langle \boldsymbol{a}\rangle}$ is defined by
$B_{m*,\boldsymbol{a}}$,
and ${\boldsymbol{\Psi}}^{\langle \boldsymbol{a}\rangle}$ (see \eqref{vazhnooW}) --- via
$\Psi_{m^\ast,\boldsymbol{a},0}$.
Thus, for the right--hand side of \eqref{4T1more+BEST} we can write that
\begin{equation}\label{rysha+R}\|f\|_{{B}_{pq}^{s+\kappa^*-\boldsymbol{\alpha},v}(\mathbb{R})}\approx 
\Bigl\|\sum_{\tau\in\mathbb{Z}}
|{\boldsymbol{\lambda}}_{0\tau}^{\langle \boldsymbol{a}\rangle}|\chi_{Q_{0\tau}}\Bigr\|_{L^p(\mathbb{R},v)}+
\biggl(\sum_{d\in\mathbb{N}}2^{qd (s+\kappa^*-\boldsymbol{\alpha})}\Bigl\|\sum_{\tau\in\mathbb{Z}}
|{\boldsymbol{\lambda}}_{d\tau}^{\langle \boldsymbol{a}\rangle}|\chi_{Q_{(d-1)\tau}}
\Bigr\|_{L^p(\mathbb{R},v)}^q\biggr)^{1/q}.\end{equation}

To prove \eqref{snizu}
we first assume that the maximum on the right side of \eqref{snizu} is equal to 
$$\mathbf{M}_+^{\boldsymbol{\alpha}}(0)=
\sup_{\tau\in\mathbb{Z}}
\biggl(\sum_{r\ge\tau}
u(Q_{0r})\biggr)^{\frac{1}{p}}\biggl(\sum_{r\le\tau} (\tau-r+1)^{p'(\boldsymbol{\alpha}-1)}
[v(Q_{0r})]^{1-p'}\biggr)^{\frac{1}{p'}}.$$ Then fix $n^*$ and 
$m^*=n^\ast+\boldsymbol{\alpha}$, and leave only the first of the two terms 
on the right side of
\eqref{rysha+}. Due to the procedure of forming iterative differences (to the left) of order
$\boldsymbol{\alpha}$ with coefficients  $$A_r(\boldsymbol{\alpha})=
\sum_{j=1}^r(-1)^{j-1}\binom{\boldsymbol{\alpha}}{j}A_{r-j}(\boldsymbol{\alpha}),
\qquad\textrm{where}\quad
A_0(\boldsymbol{\alpha})=1,$$
(see e.g. \eqref{18-12-1} and \eqref{diff}) 
the following equalities are true:
\begin{multline*}
\frac{m^*!}{n^*!}\Bigl\|\sum_{\tau\in\mathbb{Z}}
|\widetilde{\boldsymbol{\lambda}}_{0\tau}^{\langle\widetilde{\boldsymbol{a}}\rangle}|\chi_{Q_{0\tau}}\Bigr\|_{L^p(\mathbb{R},u)}^p=
\sum_{\tau\in\mathbb{Z}}u(Q_{0\tau})\bigl|\langle I_{+}^{\boldsymbol{\alpha}}f,
\widetilde{\mathbf{\Phi}}_{\tau}^{\langle\widetilde{\boldsymbol{a}}\rangle}\rangle\bigr|^p\\
= \sum_{\tau\in\mathbb{Z}}u(Q_{0\tau})\biggl|\sum_{r\ge 0} 
A_r(\boldsymbol{\alpha})\int_{\mathbb{R}}{f(y)}B_{m^*}
(y-\tau+\boldsymbol{\alpha}-\widetilde{\boldsymbol{a}}+r)\,dy
\biggr|^p\\= \sum_{\tau\in\mathbb{Z}}u(Q_{0\tau})\biggl|\sum_{r\ge 0} 
A_r(\boldsymbol{\alpha})\langle f, B_{m^*, \tau-\boldsymbol{\alpha}+\widetilde{\boldsymbol{a}}-r}\rangle
\biggr|^p= \sum_{\tau\in\mathbb{Z}}u(Q_{0\tau})\biggl|\sum_{l\le \tau} 
A_{\tau-l}(\boldsymbol{\alpha})\langle f, B_{m^*, l-\boldsymbol{\alpha}+\widetilde{\boldsymbol{a}}}\rangle
\biggr|^p.\end{multline*} Then we obtain from \eqref{4T1more+BEST}, \eqref{rysha+}
and \eqref{rysha+R}:
\begin{multline*}
\biggl(\sum_{\tau\in\mathbb{Z}}u(Q_{0\tau})\biggl|\sum_{l\le \tau} 
A_{\tau-l}(\boldsymbol{\alpha})\langle f, B_{m^*, l-\boldsymbol{\alpha}+\widetilde{\boldsymbol{a}}}\rangle
\biggr|^p\biggr)^{1/p}\simeq
\Bigl\|\sum_{\tau\in\mathbb{Z}}
|\widetilde{\boldsymbol{\lambda}}_{0\tau}^{\langle\widetilde{\boldsymbol{a}}\rangle}|\chi_{Q_{0\tau}}\Bigr\|_{L^p(\mathbb{R},u)}\\\lesssim
\|I_{+}^{\boldsymbol{\alpha}}f\|_{{B}_{pq}^{s,u}(\mathbb{R})}\lesssim 
\mathfrak{C}\|f\|_{{B}_{pq}^{s+\kappa^*-\boldsymbol{\alpha},v}(\mathbb{R})}
\\\approx\mathfrak{C}\Biggl[
\Bigl\|\sum_{\tau\in\mathbb{Z}}
|{\boldsymbol{\lambda}}_{0\tau}^{\langle \boldsymbol{a}\rangle}|\chi_{Q_{0\tau}}\Bigr\|_{L^p(\mathbb{R},v)}+
\biggl(\sum_{d\in\mathbb{N}}2^{qd (s+\kappa^*-\boldsymbol{\alpha})}\Bigl\|\sum_{\tau\in\mathbb{Z}}
|{\boldsymbol{\lambda}}_{d\tau}^{\langle \boldsymbol{a}\rangle}|\chi_{Q_{(d-1)\tau}}
\Bigr\|_{L^p(\mathbb{R},v)}^q\biggr)^{1/q}\Biggr]\\=\mathfrak{C}\Biggl[
\biggl(\sum_{\tau\in\mathbb{Z}}v(Q_{0\tau})
\bigl|\langle f,
B_{m^*, \tau+\boldsymbol{a}}\rangle\bigr|^p\biggr)^{1/p}+
\biggl(\sum_{d\in\mathbb{N}}2^{qd (s+\kappa^*-\boldsymbol{\alpha})}
\biggl(\sum_{\tau\in\mathbb{Z}}v(Q_{(d-1)\tau})
|\langle f, \boldsymbol{\Psi}_{(d-1)\tau}^{\langle \boldsymbol{a}\rangle}\rangle|
\biggr)^{q/p}\biggr)^{1/q}\Biggr].
\end{multline*} On the strength of Lemma \ref{basa} and property \eqref{dpI'},
$$\sum_{\tau\in\mathbb{Z}}v(Q_{0\tau})
\bigl|\langle f,
B_{m^*, \tau+\boldsymbol{a}}\rangle\bigr|^p\lesssim 
\sum_{l\in\mathbb{Z}}v(Q_{0l})
\bigl|\langle f,
B_{m^*,l-\boldsymbol{\alpha}+\boldsymbol{a}}\rangle\bigr|^p.$$
Thus,
\begin{multline}\label{4T1more+BEST1}
\biggl(\sum_{\tau\in\mathbb{Z}}u(Q_{0\tau})\biggl|\sum_{l\le \tau} 
A_{\tau-l}(\boldsymbol{\alpha})\langle f, B_{m^*, l-\boldsymbol{\alpha}+
\widetilde{\boldsymbol{a}}}\rangle
\biggr|^p\biggr)^{1/p}\lesssim
\mathfrak{C} 
\Biggl[\biggl(\sum_{l\in\mathbb{Z}}v(Q_{0l})
\bigl|\langle f,
B_{m^*,l-\boldsymbol{\alpha}+\boldsymbol{a}}\rangle\bigr|^p\biggr)^{1/p}\\+
\biggl(\sum_{d\in\mathbb{N}}2^{qd (s+\kappa^*-\boldsymbol{\alpha})}
\biggl(\sum_{\tau\in\mathbb{Z}}v(Q_{(d-1)\tau})
|\langle f, \boldsymbol{\Psi}_{(d-1)\tau}^{\langle \boldsymbol{a}\rangle}\rangle|
\biggr)^{q/p}\biggr)^{1/q}\Biggr].
\end{multline} 

For $A_r(m)$ with fixed
$m\in\mathbb{N}$, the following lower bound holds for all $r\in\mathbb{N}$:
\begin{equation}\label{AR}
(m-1)!A_r(m)\ge (1+r)^{m-1}=\sum_{s=0}^{m-1}\frac{(m-1)!\,r^s}{s!(m-1-s)!}.
\end{equation} We will show that this is so, taking into account (see \cite[(5.20)]{Ujms}) that
$A_1(m)=m$ and 
\begin{equation}\label{AR1}
A_r(m)=\sum_{1\le l\le m}A_{r-1}(l),\qquad(m\in\mathbb{N}).\end{equation}
For $r=1$, \eqref{AR} holds by \eqref{AR1} for all $m=m_0\in\mathbb{N}$. Suppose
that \eqref{AR} holds for $r=k-1$ with some $k>1$ and any $m=l$, where $1\le l\le m_0$.
Then it follows from \eqref{AR1} that
$$A_k(m_0)=\sum_{1\le l\le m_0}A_{k-1}(l)\ge
\sum_{1\le l\le m_0}\frac{k^{l-1}}{(l-1)!}=\sum_{s=0}^{m_0-1}\frac{k^{s}}{s!}
\ge \sum_{s=0}^{m_0-1}\frac{k^{s}}{s!(m_0-1-s)!}=\frac{(1+k)^{m_0-1}}{(m_0-1)!}.$$

Next, we set $\boldsymbol{a}=\widetilde{\boldsymbol{a}}=0$ and denote by
$\mathfrak{B}_{m^*}$ the class of functions from 
${B}_{pq}^{s+\kappa^*-\boldsymbol{\alpha},v}(\mathbb{R})$ which can be represented as 
a finite linear combination of integer shifts of $B-$splines
of order $m^*$. Due to the semi--orthogonality of the basis elements on the right--hand 
side of \eqref{rysha+R} (see \eqref{ForRepr'W}),
the restriction of \eqref{4T1more+BEST1} to the sequences
$\boldsymbol{\lambda}_f^{\langle 0\rangle}$ with $f$ from the class $\mathfrak{B}_{m^*}$ 
has the form \begin{equation}\label{dopka11-0}
\biggl(\sum_{\tau\in\mathbb{Z}}u(Q_{0\tau})\biggl|\sum_{l\le \tau} 
A_{\tau-l}(\boldsymbol{\alpha})\langle f, B_{m^*, l-\boldsymbol{\alpha}}\rangle
\biggr|^p\biggr)^{1/p}\lesssim
\mathfrak{C}_{\mathfrak{B}_{m^*}}
\biggl(\sum_{l\in\mathbb{Z}}v(Q_{0l})
\bigl|\langle f,
B_{m^*,l-\boldsymbol{\alpha}}\rangle\bigr|^p\biggr)^{1/p}. 
\end{equation} Besides, the best constant $\mathfrak{C}_{\mathfrak{B}_{m^*}}$ of this 
inequality does not exceed $\mathfrak{C}$.

For a sufficiently large $R\in\mathbb{N}$, we substitute into \eqref{dopka11-0} a function $f^*_R$
of the form
\begin{multline}\label{function*}
f^*_R(y)=\sum_{-R\le\tau\le\nu}(\nu-\tau+1)^{(\boldsymbol{\alpha}-1)(p'-1)} [v(Q_{0\tau})]^{1-p'}
B_{m^*,\tau-\boldsymbol{\alpha}}(y)\\=
\sum_{\tau\le\nu}(\nu-\tau+1)^{(\boldsymbol{\alpha}-1)(p'-1)} [v(Q_{0\tau})]^{1-p'}
B_{m^*}(y-\tau+\boldsymbol{\alpha})\qquad (\nu\in\mathbb{Z}).
\end{multline} This will lead us to inequality 
\begin{equation}\label{dopka11}
\biggl(\sum_{\tau\in\mathbb{Z}}u(Q_{0\tau})\biggl|\sum_{l\le \tau} 
A_{\tau-l}(\boldsymbol{\alpha})\langle f^*_R, B_{m^*, l-\boldsymbol{\alpha}}\rangle
\biggr|^p\biggr)^{1/p}\lesssim
\mathfrak{C}_{\mathfrak{B}_{m^*}} 
\biggl(\sum_{l\in\mathbb{Z}}v(Q_{0l})
\bigl|\langle f^*_R,
B_{m^*,l-\boldsymbol{\alpha}}\rangle\bigr|^p\biggr)^{1/p}.
\end{equation} Due to the non--negativity of $f^*_R$ and $B_{m^*}$ and on the basis of \eqref{AR}
\begin{equation*}\sum_{l\le \tau} 
A_{\tau-l}(\boldsymbol{\alpha})\langle f^*_R, B_{m^*, l-\boldsymbol{\alpha}}\rangle
\gtrsim
\sum_{l\le \tau} 
(\tau-l+1)^{\boldsymbol{\alpha}-1}\langle f^*_R, B_{m^*, l-\boldsymbol{\alpha}}\rangle.
\end{equation*}
Besides, 
\begin{multline*}\biggl(\sum_{\tau\in\mathbb{Z}}u(Q_{0\tau})\biggl|\sum_{l\le \tau} 
(\tau-l+1)^{\boldsymbol{\alpha}-1}\langle f^*_R, B_{m^*, l-\boldsymbol{\alpha}}\rangle
\biggr|^p\biggr)^{1/p}\\
\ge \Bigl(\sum_{\tau\ge \nu
}u(Q_{0\tau})\Bigr)^{1/p}
\sum_{l\le \nu} 
(\nu-l+1)^{\boldsymbol{\alpha}-1}\langle f^*_R, 
B_{m^*, l-\boldsymbol{\alpha}}\rangle.
\end{multline*} Thus, \eqref{dopka11} is transformable into
\begin{equation}\label{4T1more+BEST2}\Bigl(\sum_{\tau\ge \nu
}u(Q_{0\tau})\Bigr)^{1/p}
\sum_{l\le \nu} 
(\nu-l+1)^{\boldsymbol{\alpha}-1}\langle f^*_R, 
B_{m^*, l-\boldsymbol{\alpha}}\rangle
\lesssim
\mathfrak{C}_{\mathfrak{B}_{m^*}}
\biggl(\sum_{l\in\mathbb{Z}}v(Q_{0l})
\bigl|\langle f^*_R,
B_{m^*,l-\boldsymbol{\alpha}}\rangle\bigr|^p\biggr)^{1/p}.
\end{equation}
Further, we write for the sum depending on $f^*_R$ on the left--hand side 
\eqref{4T1more+BEST2}:
\begin{multline*}
\sum_{l\le \nu} 
(\nu-l+1)^{\boldsymbol{\alpha}-1}\langle f^*_R, 
B_{m^*, l-\boldsymbol{\alpha}}\rangle\\
=\sum_{l\le \nu} 
(\nu-l+1)^{\boldsymbol{\alpha}-1}\sum_{-R\le\tau\le\nu}(\nu-\tau+1)^{(\boldsymbol{\alpha}-1)(p'-1)}
[v(Q_{0\tau})]^{1-p'}\langle 
B_{m^*,\tau-\boldsymbol{\alpha}}, 
B_{m^*, l-\boldsymbol{\alpha}}\rangle\\
=\sum_{-R\le \tau\le\nu}(\nu-\tau+1)^{(\boldsymbol{\alpha}-1)(p'-1)} [v(Q_{0\tau})]^{1-p'}
\sum_{l\le \nu} 
(\nu-l+1)^{\boldsymbol{\alpha}-1}\langle 
B_{m^*,\tau-\boldsymbol{\alpha}}, 
B_{m^*, l-\boldsymbol{\alpha}}\rangle\\
\ge\sum_{-R\le \tau\le\nu}(\nu-\tau+1)^{(\boldsymbol{\alpha}-1)(p'-1)} [v(Q_{0\tau})]^{1-p'}
\sum_{l\le \tau} 
(\nu-l+1)^{\boldsymbol{\alpha}-1}\langle 
B_{m^*,\tau-\boldsymbol{\alpha}}, 
B_{m^*, l-\boldsymbol{\alpha}}\rangle\\
\ge\sum_{-R\le \tau\le\nu}(\nu-\tau+1)^{(\boldsymbol{\alpha}-1)p'} [v(Q_{0\tau})]^{1-p'}\sum_{l\le \tau} 
\langle 
B_{m^*,\tau-\boldsymbol{\alpha}}, 
B_{m^*, l-\boldsymbol{\alpha}}\rangle.
\end{multline*} Due to \eqref{phiphi} and orthonormality of
$\{\phi_{m^*,\tau}\}_{\tau\in\mathbb{Z}}$,
\begin{multline}\label{sm}
\beta_{m^*}^2\sum_{l\le \tau} 
\langle 
B_{m^*,\tau-\boldsymbol{\alpha}}, 
B_{m^*, l-\boldsymbol{\alpha}}\rangle\\=
\sum_{\varkappa'=0}^{m^*}\boldsymbol{\alpha}'_{\varkappa'}
\sum_{\varkappa''=0}^{m^*}\boldsymbol{\alpha}'_{\varkappa''}
\sum_{l\le \tau}\int
\phi_{m^*}(y+{\varkappa'}-\tau+\boldsymbol{\alpha})
\phi_{m^*}(y+\varkappa''-l+\boldsymbol{\alpha})\\=
\sum_{\varkappa=0}^{m^*}(\boldsymbol{\alpha}'_{\varkappa})^2
+\sum_{\varkappa''-\varkappa'=1}\boldsymbol{\alpha}'_{\varkappa'}
\boldsymbol{\alpha}'_{\varkappa''}+\ldots
+\sum_{\varkappa''-\varkappa'=m^*}\boldsymbol{\alpha}'_{\varkappa'}
\boldsymbol{\alpha}'_{\varkappa''}\\\gtrsim
\sum_{\varkappa=0}^{m^*}(\boldsymbol{\alpha}'_{\varkappa})^2
+\sum_{|\varkappa'-\varkappa''|=1}\boldsymbol{\alpha}'_{\varkappa'}
\boldsymbol{\alpha}'_{\varkappa''}+\ldots
+\sum_{|\varkappa'-\varkappa''|=m^*}\boldsymbol{\alpha}'_{\varkappa'}
\boldsymbol{\alpha}'_{\varkappa''}=
\Bigl(\sum_{\varkappa=0}^{m^*}\boldsymbol{\alpha}'_{\varkappa}\Bigr)^2=\beta_{m^*}^2.
\end{multline}
Thus, the left--hand side of \eqref{4T1more+BEST2} 
is estimated with some constant from below by
\begin{equation}\label{dopka12}
\Bigl(\sum_{\tau\ge \nu
}u(Q_{0\tau})\Bigr)^{1/p}
\sum_{-R\le \tau\le\nu}(\nu-\tau+1)^{(\boldsymbol{\alpha}-1)p'} [v(Q_{0\tau})]^{1-p'}.
\end{equation}

To estimate the right--hand side of \eqref{4T1more+BEST2} from above, we write, using the 
H\'{o}lder inequality,
\begin{multline*}
\bigl|\langle f^*_R,
B_{m^*,l-\boldsymbol{\alpha}}\rangle\bigr|^p=
\Bigl[\sum_{-R\le \tau\le\nu}(\nu-\tau+1)^{(\boldsymbol{\alpha}-1)(p'-1)} [v(Q_{0\tau})]^{1-p'}
\langle B_{m^*}(\cdot-\tau+\boldsymbol{\alpha}),
B_{m^*}(\cdot-l+\boldsymbol{\alpha})\rangle\Bigr]^p\\\le
\sum_{-R\le \tau\le\nu}(\nu-\tau+1)^{(\boldsymbol{\alpha}-1)p'} [v(Q_{0\tau})]^{-p'}
\langle B_{m^*}(\cdot-\tau+\boldsymbol{\alpha}),
B_{m^*}(\cdot-l+\boldsymbol{\alpha})\rangle\\\times
\Bigl[\sum_{-R\le r\le\nu}\langle B_{m^*}(\cdot-r+\boldsymbol{\alpha}),
B_{m^*}(\cdot-l+\boldsymbol{\alpha})\rangle\Bigr]^{p-1}.
\end{multline*} Hence, due to bounded supports of $B_{m^*}$,
 \begin{multline*}
\sum_{l\in\mathbb{Z}}v(Q_{0l})
\bigl|\langle f^*_R,
B_{m^*,l-\boldsymbol{\alpha}}\rangle\bigr|^p\\\le
\sum_{l\in\mathbb{Z}}v(Q_{0l})
\sum_{-R\le \tau\le\nu}(\nu-\tau+1)^{(\boldsymbol{\alpha}-1)p'} [v(Q_{0\tau})]^{-p'}
\langle B_{m^*}(\cdot-\tau+\boldsymbol{\alpha}),
B_{m^*}(\cdot-l+\boldsymbol{\alpha})\rangle\\\times
\Bigl[\sum_{-R\le r\le\nu}\langle B_{m^*}(\cdot-r+\boldsymbol{\alpha}),
B_{m^*}(\cdot-l+\boldsymbol{\alpha})\rangle\Bigr]^{p-1}\\=
\sum_{-R\le \tau\le\nu}(\nu-\tau+1)^{(\boldsymbol{\alpha}-1)p'} [v(Q_{0\tau})]^{-p'}
\sum_{l\in\mathbb{Z}}v(Q_{0l})\langle B_{m^*}(\cdot-\tau+\boldsymbol{\alpha}),
B_{m^*}(\cdot-l+\boldsymbol{\alpha})\\\times
\Bigl[\sum_{-R\le r\le\nu}\langle B_{m^*}(\cdot-r+\boldsymbol{\alpha}),
B_{m^*}(\cdot-l+\boldsymbol{\alpha})\rangle\Bigr]^{p-1}\\\le
\sum_{-R\le \tau\le\nu}(\nu-\tau+1)^{(\boldsymbol{\alpha}-1)p'} [v(Q_{0\tau})]^{-p'}
\sum_{|l-\tau|\le m^*}v(Q_{0l})\langle B_{m^*}(\cdot-\tau+\boldsymbol{\alpha}),
B_{m^*}(\cdot-l+\boldsymbol{\alpha})\\\times
\Bigl[\sum_{r\le\nu}\langle B_{m^*}(\cdot-r+\boldsymbol{\alpha}),
B_{m^*}(\cdot-l+\boldsymbol{\alpha})\rangle\Bigr]^{p-1}.
\end{multline*} For a fixed $l$, due to \eqref{phiphi} and orthonormality of
$\{\phi_{m^*,\tau}\}_{\tau\in\mathbb{Z}}$ (see \eqref{sm}),
\begin{equation*}
\sum_{r\le\nu}\langle B_{m^*}(\cdot-r+\boldsymbol{\alpha}),
B_{m^*}(\cdot-l+\boldsymbol{\alpha})\rangle\le
\sum_{|r-l|\le m^*}\langle B_{m^*}(\cdot-r+\boldsymbol{\alpha}),
B_{m^*}(\cdot-l+\boldsymbol{\alpha})\rangle\le 1.
\end{equation*}
By virtue of Lemma \ref{basa} and property \eqref{dpI'} we have
$v(Q_{0l})\lesssim
v(Q_{0\tau})$ for any $|l-\tau|\le m^*$. Therefore, for the right--hand side
\eqref{4T1more+BEST2} it holds that
\begin{multline*}
\sum_{l\in\mathbb{Z}}v(Q_{0l})
\bigl|\langle f^*_R,
B_{m^*,l-\boldsymbol{\alpha}}\rangle\bigr|^p\\\lesssim
\sum_{-R\le \tau\le\nu}(\nu-\tau+1)^{(\boldsymbol{\alpha}-1)p'} [v(Q_{0\tau})]^{1-p'}
\sum_{|l-\tau|\le m^*}\langle B_{m^*}(\cdot-\tau+\boldsymbol{\alpha}),
B_{m^*}(\cdot-l+\boldsymbol{\alpha})\\\le
\sum_{-R\le \tau\le\nu}(\nu-\tau+1)^{(\boldsymbol{\alpha}-1)p'} [v(Q_{0\tau})]^{1-p'}.
\end{multline*}
Combining this with \eqref{4T1more+BEST2} and \eqref{dopka12}, we obtain from
\eqref{dopka11} that
$$\mathfrak{C}_{\mathfrak{B}_{m^*}}\gtrsim
\sup_{\nu\in\mathbb{Z}}
\biggl(\sum_{\tau\ge\nu}
u(Q_{0\tau})\biggr)^{\frac{1}{p}}\biggl(\sum_{-R\le \tau\le\nu} (\nu-\tau+1)^{p'(\boldsymbol{\alpha}-1)}
[v(Q_{0\tau})]^{1-p'}\biggr)^{\frac{1}{p'}},$$
from which, tending $R\to\infty$, we extract the estimate $\mathfrak{C}\ge \mathfrak{C}_{\mathfrak{B}_{m^*}}
\gtrsim\mathbf{M}_+^{\boldsymbol{\alpha}}(0)$
for the best constant
of the inequality \eqref{4T1more+BEST}. Similarly, it can be shown that
$\mathfrak{C}\gtrsim\mathbf{M}_+^{\boldsymbol{\alpha}}(1)$, where
$$\mathbf{M}_+^{\boldsymbol{\alpha}}(1)=
\sup_{\tau\in\mathbb{Z}}
\biggl(\sum_{r\ge\tau}(r-\tau+1)^{p(\boldsymbol{\alpha}-1)}
u(Q_{0r})\biggr)^{\frac{1}{p}}\biggl(\sum_{r\le\tau} 
[v(Q_{0r})]^{1-p'}\biggr)^{\frac{1}{p'}}.$$ To do this, we need to
define a test function dual to $f^*_R$
\begin{equation*}
g^*_R(y)=\sum_{\nu\le\tau\le R}(\tau-\nu+1)^{(\boldsymbol{\alpha}-1)(p-1)}
[u(Q_{0\tau})]B_{m^*,\tau-\boldsymbol{\alpha}}(y)
\end{equation*}
and substitute it into the dual to \eqref{dopka11-0} inequality of the form
\begin{equation*}\biggl(\sum_{\tau\in\mathbb{Z}}[v(Q_{0\tau})]^{1-p'}\biggl|\sum_{l\ge \tau} 
A_{l-\tau}(\boldsymbol{\alpha})\langle g, B_{m^*, l-\boldsymbol{\alpha}}\rangle
\biggr|^{p'}\biggr)^{1/p'}\lesssim
\mathfrak{C}_{\mathfrak{B}_{m^*}} 
\biggl(\sum_{l\in\mathbb{Z}}[u(Q_{0l})]^{1-p'}
\bigl|\langle g,
B_{m^*,l-\boldsymbol{\alpha}}\rangle\bigr|^{p'}\biggr)^{1/{p'}}.
\end{equation*} 
 
Now suppose the maximum on the right--hand side \eqref{snizu} is achieved at
$\sup_{d\in\mathbb{N}}\mathfrak{M}(d,\kappa^*;v,u)$. 
Let us show the necessity of such a characteristic. 
To do this, for some $d_0$ and $\tau_0$ we define 
\begin{multline}\label{fu}h^*_{\tau_0}(y)
=\mathbf{\Lambda}_{m^*}^{-1}
\mathbf{\Psi}_{m^*,\bar{\boldsymbol{a}},\bar{\boldsymbol{s}}; n^*(1),\boldsymbol{\alpha}(1)}
(2^{d_0-1}y-\tau_0)\\=
\sum_{k=0}^{2\boldsymbol{\alpha}}(-1)^k
\binom{2\boldsymbol{\alpha}}{k}h_{\tau_0;k}(y)
=\sum_{|\varkappa|\le n^*}\lambda_{|\varkappa|}(n^*)(-1)^{|\varkappa|}
\sum_{k=0}^{2\boldsymbol{\alpha}}(-1)^k
\binom{2\boldsymbol{\alpha}}{k}h_{\tau_0;k,\varkappa}(y)\end{multline} with
$\bar{\boldsymbol{s}}\in\mathbb{R}$, which we will specified later, and
\begin{gather*}h_{{\tau_0};k}(y)=
\mathbf{\Psi}_{m^*,\bar{\boldsymbol{a}},\bar{\boldsymbol{s}}; n^*(1),\boldsymbol{\alpha}(0)}
(2^{d_0-1}y-\tau_0-k/2+\boldsymbol{\alpha}/2),\\
h_{{\tau_0};k,\varkappa}(y)=
\mathbf{\Psi}_{m^*,\bar{\boldsymbol{a}},\bar{\boldsymbol{s}}}
(2^{d_0-1}y-\tau_0-k/2-\varkappa/2+\boldsymbol{\alpha}/2).
\end{gather*}
We will need the representation \eqref{rysha+} for $f=h^*_{\tau_0}$
and \eqref{rysha+R} for $f=h_{{\tau_0};k,\varkappa}$ with 
$k=0,\ldots,2\boldsymbol{\alpha}$ and
$\varkappa=-n^*,\ldots,0,\ldots,n^*$.
In this situation, we fix $n^*$ and $m^*$ so that $n^*=m^*+\boldsymbol{\alpha}$.

Then (see \eqref{diff} and \eqref{dopka24})
\begin{multline}\label{fuI}\gamma_{m^*}
I_+^{\boldsymbol{\alpha}}h^*_{\tau_0}(x)=2^{-\boldsymbol{\alpha}d_0}\gamma_{n^*}
\mathbf{\Psi}_{n^*,\bar{\boldsymbol{a}},\bar{\boldsymbol{s}}; m^*(1),\boldsymbol{\alpha}(0)}
(2^{d_0-1}\cdot-\tau_0)=2^{-\boldsymbol{\alpha}d_0}
\sum_{|r|\le m^*}\lambda_{|r|}(m^*)(-1)^{|r|}
\\\times\sum_{|\kappa|\le n^*}\lambda_{|\kappa|}(n^*)(-1)^{|\kappa|}
\sum_{j= 0}^{n^*+1}(-1)^j\binom{1+n^*}{j}\ 
B_{n^*}\bigl(2^{d_0}(x-\bar{\boldsymbol{s}}-\bar{\boldsymbol{a}})-2\tau_0
+n^*-j-r-\kappa\bigr).
\end{multline} We have
${\rm supp}\,(I_+^{\boldsymbol{\alpha}}h^*_{\tau_0})=
\Bigl[\frac{2(\tau_0+\bar{\boldsymbol{s}}+
\bar{\boldsymbol{a}})-2n^*-m^*}{2^{d_0}},\frac{2(\tau_0+\bar{\boldsymbol{s}}+
\bar{\boldsymbol{a}})
+2n^*+m^*+2}{2^{d_0}}
\Bigr]$ (see \S\,\ref{dopol}). On the other hand, the supports of the functions
$\Psi_{n^*,\widetilde{\boldsymbol{a}},0}(2^{d-1}\cdot-\tau)$ forming the coefficients $\widetilde{\boldsymbol{\lambda}}_{d\tau}
^{\langle \widetilde{\boldsymbol{a}}\rangle}$
in \eqref{rysha+}, coincide with $$\left[\frac{2\tau
+2\widetilde{\boldsymbol{a}}-2n^*}{2^d}, 
\frac{2\tau+2\widetilde{\boldsymbol{a}}+2n^*+2}{2^d}\right]$$
(see\S\,\ref{dopol}), besides
\begin{multline}\label{12-11}
\gamma_{n^*}\mathbf{\Psi}_{n^*,\widetilde{\boldsymbol{a}},0}
(2^{d-1}\cdot-\tau)
=\sum_{|l|\le n^*}\lambda_{|l|}(n^*)(-1)^{|l|}
\\\times
\sum_{s= 0}^{n^*+1}(-1)^s\binom{1+n^*}{s}\ 
B_{n^*}\bigl(2^{d_0}(y-
\widetilde{\boldsymbol{a}})-2\tau+n^* -s-l
\bigr).\end{multline} 

Denote $\int_\mathbb{R}B_n(t) B_n(t-n)\,dt=:\mathbf{B}_n$
and observe that $\mathbf{B}_n>0$ for any $n\in\mathbb{N}$. 
We will call $\mathbf{B}_n$ as $\mathbf{B}_n-$intersection, which is minimal in some sense.
It is true: 
 (see \eqref{rysha+})
\begin{multline}\label{rass}\|I_{+}^{\boldsymbol{\alpha}}h^*_{\tau_0}\|^p_{B^{s,u}_{pq}(\mathbb{R})}
\gtrsim
2^{d_0 sp}|\widetilde{\boldsymbol{\lambda}}_{d_0\tau_0}
^{\langle\widetilde{\boldsymbol{a}}\rangle}|^p\,[u(Q_{(d_0-1)\tau_0})]=
2^{d_0 sp}2^{d_0p/2}|\langle 
I_{+}^{\boldsymbol{\alpha}}h^*_{\tau_0},
\widetilde{\mathbf{\Psi}}^{\langle\widetilde{\boldsymbol{a}}\rangle}_{(d_0-1)\tau_0}
\rangle|^p\,[u(Q_{(d_0-1)\tau_0})]\\=
2^{d_0 (s-\boldsymbol{\alpha})p}2^{d_0p}|\langle 
\mathbf{\Psi}_{n^*,\bar{\boldsymbol{a}},\bar{\boldsymbol{s}}; m^*(1),\boldsymbol{\alpha}(0)}
(2^{d_0-1}\cdot-\tau_0),
\mathbf{\Psi}_{n^*,\widetilde{\boldsymbol{a}},0}
(2^{d_0-1}\cdot-\tau_0)
\rangle|^p\,[u(Q_{(d_0-1)\tau_0})]\\=
2^{d_0 (s-\boldsymbol{\alpha})p}|\langle 
\mathbf{\Psi}_{n^*,\bar{\boldsymbol{a}},\bar{\boldsymbol{s}}; m^*(1),\boldsymbol{\alpha}(0)}
(\cdot/2-\tau_0),
\mathbf{\Psi}_{n^*,\widetilde{\boldsymbol{a}},0}
(\cdot/2-\tau_0)
\rangle|^p\,[u(Q_{(d_0-1)\tau_0})]
.\end{multline} 
If we choose $\bar{\boldsymbol{s}}=-1-m^*-4n^*$
and set $\bar{\boldsymbol{a}}=\widetilde{\boldsymbol{a}}=0$, the supports of
$\mathbf{\Psi}_{n^*,\bar{\boldsymbol{a}},\bar{\boldsymbol{s}}; m^*(1),\boldsymbol{\alpha}(0)}$
and $\mathbf{\Psi}_{n^*,\widetilde{\boldsymbol{a}},0}$
in \eqref{rass} intersect only at intersection of the supports
of the leftmost $B_{n^*}$ in the representation \eqref{12-11}, which is
$B_{n^*}(\cdot-2\tau_0+2n^*)$, and the rightmost $B_{n^*}$ in the representation
\eqref{fuI}, which corresponds to $B_{n^*}(\cdot-2\tau_0+3n^*)$. So, it will be
exactly intersection of the type $\mathbf{B}_{n^*}$.
The supports of other $B_{n^*}$ in the specified representations
will not intersect each other if $\bar{\boldsymbol{s}}=-1-m^*-4n^*$. 
Therefore, taking into account all the coefficients, we conclude that
\begin{equation}\label{18-12-2}
\langle 
\mathbf{\Psi}_{n^*,\bar{\boldsymbol{a}},\bar{\boldsymbol{s}}; m^*(1),\boldsymbol{\alpha}(0)}
(\cdot/2-\tau_0),
\mathbf{\Psi}_{n^*,\widetilde{\boldsymbol{a}},0}
(\cdot/2-\tau_0)
\rangle=\frac{16\mathbf{B}_{n^*}}{\gamma_{m^*}\gamma_{n^*}}=:\Theta(n^*).
\end{equation} From here and \eqref{rass} we obtain that
\begin{equation*}\label{rass1}\|I_{+}^{\boldsymbol{\alpha}}h^*_{\tau_0}\|_{B^{s,u}_{pq}(\mathbb{R})}
\gtrsim 
2^{d_0 (s-\boldsymbol{\alpha})}\,[u(Q_{(d_0-1)\tau_0})]^{\frac{1}{p}}\Theta(n^*).
\end{equation*}

On the other hand, 
$$\|h_{\tau_0}^*\|_{{B}_{pq}^{s+\kappa^*-\boldsymbol{\alpha},v}(\mathbb{R})}
\lesssim\sum_{|\varkappa|\le n^*}\lambda_{|\varkappa|}(n^*)
\sum_{k=1}^{2\boldsymbol{\alpha}}\binom{2\boldsymbol{\alpha}}{k}
\|h_{{\tau_0};k,\varkappa}\|_{{B}_{pq}^{s+\kappa^*-\boldsymbol{\alpha},v}(\mathbb{R})},
$$ where $h_{{\tau_0};k,\varkappa}$ are finite linear combinations of mutually 
orthogonal
integer shifts of $\psi_{m^*,\bar{\boldsymbol{a}},\bar{\boldsymbol{s}}}
(2^{d_0-1}\cdot-\tau_0-k/2
-\varkappa/2+\boldsymbol{\alpha}/2)$ with
$\bar{\boldsymbol{a}}$ and $\bar{\boldsymbol{s}}$ as above. For fixed $k$ and $\varkappa$, 
this corresponds to $\psi_{m^*,{\boldsymbol{a}},\boldsymbol{s}}(2^{d_0-1}\cdot-\tau_0)$
with
${\boldsymbol{a}}=(k+\varkappa-\boldsymbol{\alpha})/2-[(k+\varkappa-
\boldsymbol{\alpha})/2]$ and $\boldsymbol{s}=\bar{\boldsymbol{s}}+[(k+\varkappa-
\boldsymbol{\alpha})/2]$ (see \eqref{nosit1}).

From the representation \eqref{rysha+R} we can write for each $k+\varkappa$:
\begin{equation}\label{12-11-1}\|h_{\tau_0;k,\varkappa}\|_{{B}_{pq}^{s+\kappa^*-\boldsymbol{\alpha},v}(\mathbb{R})}\approx 
\Bigl\|\sum_{\tau\in\mathbb{Z}}
|{\boldsymbol{\lambda}}_{0\tau}^{\langle \boldsymbol{a}\rangle}|\chi_{Q_{0\tau}}\Bigr\|_{L^p(\mathbb{R},v)}+
\biggl(\sum_{d\in\mathbb{N}}2^{qd (s+\kappa^*-\boldsymbol{\alpha})}\Bigl\|\sum_{\tau\in\mathbb{Z}}
|{\boldsymbol{\lambda}}_{d\tau}^{\langle \boldsymbol{a}\rangle}|\chi_{Q_{(d-1)\tau}}
\Bigr\|_{L^p(\mathbb{R},v)}^q\biggr)^{1/q},\end{equation}
where the elements
${\boldsymbol{\lambda}}^{\langle \boldsymbol{a}\rangle}_{h_{\tau_0;k,\varkappa}}$
are given in \eqref{asty'25}
by $B_{m*,\boldsymbol{a}}$ and
$\Psi_{m^\ast,\boldsymbol{a},0}$ with
$\boldsymbol{a}$ as above. Due to the semi--orthogonality of the basis generated by
$B_{m*,\boldsymbol{a}}$ and
$\Psi_{m^\ast,\boldsymbol{a},0}$,
\begin{multline}\label{12-11-2}\|h_{\tau_0;k,\varkappa}\|_{{B}_{pq}^{s+\kappa^*-\boldsymbol{\alpha},v}
(\mathbb{R})}^p\approx
2^{d_0(s-\boldsymbol{\alpha}+\kappa^*+1)p}
\Bigl\|\sum_{\tau\in\mathbb{Z}}
|\langle h_{\tau_0;k,\varkappa}(\cdot),\Psi_{m^\ast,\boldsymbol{a},0}(2^{d_0-1}\cdot
-\tau)\rangle|
\chi_{Q_{(d_0-1)\tau}}
\Bigr\|_{L^p(\mathbb{R},v)}^p\\
=\mathbf{\Lambda}_{m^*}^{-2}2^{d_0(s-\boldsymbol{\alpha}+\kappa^*+1)p}
\sum_{\tau\in\mathbb{Z}}
\bigl|\langle \Psi_{m^\ast,{\boldsymbol{a}},{\boldsymbol{s}}}(2^{d_0-1}\cdot
-\tau_0),\Psi_{m^\ast,\boldsymbol{a},0}(2^{d_0-1}\cdot
-\tau)\rangle\bigr|^p[v(Q_{(d_0-1)\tau}].\end{multline}
Since the support of ${\Psi}_{m^*,\boldsymbol{a},0}$ is compact, then
the sum over $\tau$ is limited to $|\tau_0+\boldsymbol{s}-\tau|\le 2m^*$.
In this case, $v(Q_{(d_0-1)\tau}\lesssim v(Q_{(d_0-1)\tau_0}$ with a constant
depending on $n^*$ and $\boldsymbol{\alpha}$.
In view of
$\gamma_{m^*}{\Psi}_{m^*,\boldsymbol{a},0}(2^{d_0-1}\cdot-\tau)=
\sum_{|\rho|\le m^*}
\boldsymbol{\alpha}''_\rho\cdot
\psi_{m^*,\boldsymbol{a},0}(2^{d_0-1}\cdot-\rho-\tau)
$ we have (see \cite[\S\,4.1]{RMC}) 
$$\sum_{|\tau_0+\boldsymbol{s}-\tau|\le 2m^*}
\bigl|\langle \Psi_{m^\ast,{\boldsymbol{a}},{\boldsymbol{s}}}(2^{d_0-1}\cdot
-\tau_0),\Psi_{m^\ast,\boldsymbol{a},0}(2^{d_0-1}\cdot
-\tau)\rangle\bigr|\lesssim \mathbf{\Lambda}_{m^*}^{2}.$$
From this we obtain that $\mathfrak{C}\gtrsim 2^{-d_0\kappa^*}
\bigl[v(Q_{(d_0-1)\tau_0})\bigr]^{-\frac{1}{p}}
\bigl[u(Q_{(d_0-1)\tau_0})\bigr]^{\frac{1}{p}}$ for arbitrary $d_0$ and $\tau_0$.
Hence, 
\begin{equation}\label{18-12-3}
\mathfrak{C}\gtrsim \sup_{d\in\mathbb{N}}\mathfrak{M}(d,\kappa^*;v,u).
\end{equation}

The proof of statement (i) in the case of $I_{-}^{\boldsymbol{\alpha}}$ is similar.
In this case, it is necessary to apply the procedure for forming iterative differences 
to the right side.

(ii) {\it Sufficiency} of the conditions in this part follows from
Theorem \ref{S5T1} (iii) and the criteria for embeddings of Besov weighted spaces
(see \cite[Proposition 2.1]{HSc} and \cite[Proposition 5.1]{Ma}).

{\it Necessity} can be proven similarly to the estimate $\mathfrak{C}\gtrsim
\sup_{d\in\mathbb{N}}\mathfrak{M}(d,\kappa^*;v,u)$. Namely,
it is first assumed that the inequality \eqref{4T1more-_F} is valid.
Next, one should fix $n_\ast\in\mathbb{N}$ and $m_\ast\in\mathbb{N}$ satisfying
$n_*=m_\ast+\boldsymbol{\alpha}$ and the condition \eqref{condB}
with respect to parameters of the spaces $B^{s,u}_{pq}(\mathbb{R})$ and
$B^{s-\kappa_*-\boldsymbol{\alpha},w}_{pq}( \mathbb{R})$, respectively.
Then, by Theorem \ref{main'}, the representation \eqref{24_05} is true with
$\widetilde{\boldsymbol{\lambda}}^{\langle \widetilde{\boldsymbol{a}}\rangle}_{(I_{+}^
{\boldsymbol{\alpha}}f)}$ of the form \eqref{asty'++}, where
$\widetilde{\boldsymbol{\Phi}}^{\langle\widetilde{\boldsymbol{a}}\rangle}$
are defined by $B_{n*,\widetilde{\boldsymbol{a}}}$,
and $\widetilde{\boldsymbol{\Psi}}^{\langle \widetilde{\boldsymbol{a}}\rangle}$
(see \eqref{vazhnoo}) --- via
$\Psi_{n^\ast,\widetilde{\boldsymbol{a}},0}$, with
some $\widetilde{\boldsymbol{a}}\in
\{0,\pm1/2\}$. Thus, the representation \eqref{rysha+} takes place 
for the right--hand side of \eqref{4T1more-_F}.

Similarly, also based on Theorem \ref{main'},
$$
\|f\|_{{B}_{pq}^{s-\kappa_*-\boldsymbol{\alpha},w}(\mathbb{R})}\approx
\|{\boldsymbol{\lambda}}^{\langle \boldsymbol{a}\rangle}_f\|_{b_{pq}^{s-\kappa_*-\boldsymbol{\alpha}}(w)},$$
where the sequence ${\boldsymbol{\lambda}}^{\langle \boldsymbol{a}\rangle}_f$ with
$\boldsymbol{a}
\in\{0,\pm1/2\}$ is defined in \eqref{asty'25} with
${\boldsymbol{\Phi}}^{\langle \boldsymbol{a}\rangle}$ and
${\boldsymbol{\Psi}}^{\langle \boldsymbol{a}\rangle}$ (see \eqref{vazhnooW}),
specified by
$B_{m*,\boldsymbol{a}}$ and $\Psi_{m^\ast,\boldsymbol{a},0}$.
That is, for the left side of \eqref{4T1more-_F} the following is true:
\begin{equation}\label{rysha+R_*}\|f\|_{{B}_{pq}^{s-\kappa_*-\boldsymbol{\alpha},w}(\mathbb{R})}\approx 
\Bigl\|\sum_{\tau\in\mathbb{Z}}
|{\boldsymbol{\lambda}}_{0\tau}^{\langle \boldsymbol{a}\rangle}|\chi_{Q_{0\tau}}\Bigr\|_{L^p(\mathbb{R},w)}+
\biggl(\sum_{d\in\mathbb{N}}2^{qd (s-\kappa_*-\boldsymbol{\alpha})}\Bigl\|\sum_{\tau\in\mathbb{Z}}
|{\boldsymbol{\lambda}}_{d\tau}^{\langle \boldsymbol{a}\rangle}|\chi_{Q_{(d-1)\tau}}
\Bigr\|_{L^p(\mathbb{R},w)}^q\biggr)^{1/q}.\end{equation}

Next, for arbitrary $d_0$ and $\tau_0$, the function
$h_{*,\tau_0}=h^*_{\tau_0}$ is introduced into consideration (see \eqref{fu}). 
In this case (see \eqref{fuI}),
\begin{multline*}\gamma_{n^*}2^{\boldsymbol{\alpha}d_0}
I_+^{\boldsymbol{\alpha}}h^*_{\tau_0}(x)=
\mathbf{\Psi}_{n^*,\bar{\boldsymbol{a}},\bar{\boldsymbol{s}}; m^*(1),\boldsymbol{\alpha}(0)}
(2^{d_0-1}y-\tau_0)\\
=
\sum_{|l|\le m^*}\lambda_{|l|}(m^*)(-1)^{|l|}
\mathbf{\Psi}_{n^*,\bar{\boldsymbol{a}},\bar{\boldsymbol{s}}}
(2^{d_0-1}y-\tau_0-l/2)=:\sum_{|l|\le m^*}\lambda_{|l|}(m^*)(-1)^{|l|}
h_{*,t_0,l}(y).\end{multline*}
We will use the representation \eqref{rysha+R_*} for $f=h_{*,\tau_0}$
and \eqref{rysha+} for $h_{*,t_0,l}$ 
with $|l|\le m^*$ instead of $I_+^{\boldsymbol{\alpha}}f$.
First, we estimate the left--hand side of \eqref{4T1more-_F}.
To do this, we write (see \eqref{rysha+R_*})
\begin{multline*}\label{rassI}\|h_{*,\tau_0}\|^p_{B^{s-\kappa_*-\boldsymbol{\alpha},w}_{pq}
(\mathbb{R})} \gtrsim 
2^{d_0 (s-\kappa_*-\boldsymbol{\alpha})p}|{\boldsymbol{\lambda}}_{d_0\tau_0}
^{\langle{\boldsymbol{a}}\rangle}|^p\,[w(Q_{(d_0-1)\tau_0})]\\=
2^{d_0 (s-\kappa_*-\boldsymbol{\alpha})p}2^{d_0p/2}|\langle 
h_{*,\tau_0},
{\mathbf{\Psi}}^{\langle{\boldsymbol{a}}\rangle}_{(d_0-1)\tau_0}
\rangle|^p\,[w(Q_{(d_0-1)\tau_0})]\\=
2^{d_0 (s-\kappa_*-\boldsymbol{\alpha})p}2^{d_0p}|\langle 
\mathbf{\Psi}_{m^*,\bar{\boldsymbol{a}},\bar{\boldsymbol{s}}; n^*(1),\boldsymbol{\alpha}(1)}
(2^{d_0-1}\cdot-\tau_0),
\mathbf{\Psi}_{m^*,{\boldsymbol{a}},0}
(2^{d_0-1}\cdot-\tau_0)
\rangle|^p\,[w(Q_{(d_0-1)\tau_0})]
.\end{multline*} Just as in \eqref{rass} here it is sufficient to fix
$\bar{\boldsymbol{s}}$ in such a way to reduce the intersection of the supports
$\mathbf{\Psi}_{m^*,\bar{\boldsymbol{a}},\bar{\boldsymbol{s}}; n^*(1),\boldsymbol{\alpha}(1)}
(2^{d_0-1}\cdot-\tau_0)$ and $\mathbf{\Psi}_{m^*,{\boldsymbol{a}},0}$ with $\bar{\boldsymbol{a}}=
{\boldsymbol{a}}=0$
to $\mathbf{B}_{m^*}-$intersection of the supports of the rightmost $B_{m^*}$ in the corresponding representation
for $\mathbf{\Psi}_{m^*,\bar{\boldsymbol{a}},\bar{\boldsymbol{s}}; n^*(1),\boldsymbol{\alpha}(1)}
(2^{d_0-1}\cdot-\tau_0)$ and the leftmost $B_{m^*}$ in the representation for
$\mathbf{\Psi}_{m^*,{\boldsymbol{a}},0}$. This will lead us to the estimate
 \begin{equation}\label{12-11-3}
\|h_{*,\tau_0}\|_{B^{s-\kappa_*-\boldsymbol{\alpha},w}_{pq}
(\mathbb{R})}\gtrsim 2^{d_0 (s-\kappa_*-\boldsymbol{\alpha})}
\,[w(Q_{(d_0-1)\tau_0})]^{\frac{1}{p}}\Theta(m^*).
\end{equation} On the other hand, we will estimate
$\|I_+^{\boldsymbol{\alpha}}h_{*,\tau_0}\|^p_{B^{s,u}_{pq}
(\mathbb{R})}$ from above via $2^{(s-\boldsymbol{\alpha})d_0}\mathbf{\Lambda}_{n^*}^{2}$
by analogy with \eqref{12-11-1}
and \eqref{12-11-2}. Together with \eqref{12-11-3} this will lead us to the inequality
\begin{equation}\label{18-12-4}\mathds{C}\gtrsim 2^{-d_0\kappa_*}
\bigl[u(Q_{(d_0-1)\tau_0})\bigr]^{-\frac{1}{p}}
\bigl[w(Q_{(d_0-1)\tau_0})\bigr]^{\frac{1}{p}},\end{equation} and hence, 
to the required lower bound for the
best constant $\mathds{C}$ of the inequality \eqref{4T1more-_F}.
\end{proof}

For the operators $I_{c^\pm}^{\boldsymbol{\alpha}}$ with $c\in\mathbb{R}$ 
the following analogous to Theorem \ref{S5T1-F} statement is true.

\begin{theorem}\label{S6T1-F}
Let 
$p> 1$, $0<q\le\infty$, $s\in\mathbb{R}$, 
weights $u,v\in\mathscr{A}_\infty^{\rm loc}$, 
$\boldsymbol{\alpha}\in\mathbb{N}$ 
and $f\in L_{\rm loc}(\mathbb{R})$. Suppose that
$f\equiv 0$ on $(-\infty,c)$ in the case of the operator $I_{c^+}^{\boldsymbol{\alpha}}$ 
and $f\equiv 0$ on $(c,\infty)$ for $I_{c^-}^{\boldsymbol{\alpha}}$.\\ 
{\rm (i)} $I_{c^\pm}^{\boldsymbol{\alpha}}$ is bounded from
${B}_{pq}^{s+\kappa^*-\boldsymbol{\alpha},v}(\mathbb{R})$ to ${B}_{pq}^{s,u}(\mathbb{R})$
with some $\kappa^*\in\mathbb{R}$ if and only if
$$
\widetilde{\mathfrak{C}}_{\pm}^{\boldsymbol{\alpha}}(\kappa^*,c)=
\widetilde{\mathbf{M}}_\pm^{\boldsymbol{\alpha}}(1,c)+
\widetilde{\mathbf{M}}_\pm^{\boldsymbol{\alpha}}(0,c)+
\sup_{d\in\mathbb{N}}\widetilde{\mathfrak{M}}_\pm(d,\kappa^*,c;v,u)<\infty,$$
where
\begin{equation*}
\widetilde{\mathfrak{M}}_\pm(d,\kappa,c;v,u):=
\sup_{\tau\in\pm\mathbb{N}_0} 2^{-d\kappa}
\bigl[v(Q_{(d-1)\tau}^{\langle c\rangle })\bigr]^{-\frac{1}{p}}
\bigl[u(Q_{(d-1)\tau}^{\langle c\rangle })\bigr]^{\frac{1}{p}}.\end{equation*}
Moreover, the norm $\|I_{c^\pm}^{\boldsymbol{\alpha}}\|_
{{B}_{pq}^{s+\kappa^*-\boldsymbol{\alpha},v}(\mathbb{R})\to {B}_{pq}^{s,u}(\mathbb{R})}$
of the operator $I_{c^\pm}^{\boldsymbol{\alpha}}$
is equivalent to $\widetilde{\mathfrak{C}}_{\pm}^{\boldsymbol{\alpha}}(\kappa^*,c)$.
\\
{\rm (ii)} Let $\kappa_*\in\mathbb{R}$ and assume that $0<\widetilde{\mathds{C}}$ --- 
the best independent of $f$ constant in 
\begin{equation}\label{4T1more-_F'}
\|f\|_{{B}_{pq}^{s-\kappa_*-\boldsymbol{\alpha},w}(\mathbb{R})}\lesssim 
\widetilde{\mathds{C}}\|I_{c^+}^{\boldsymbol{\alpha}}f\|_{{B}_{pq}^{s,u}(\mathbb{R})}.
\end{equation} The inequality \eqref{4T1more-_F'} holds if and only if 
$$\widetilde{\mathds{C}}(\kappa_*):=
\sup_{d\in\mathbb{N}_0}\widetilde{\mathfrak{M}}_\pm(d,\kappa_*,c;u,w)<\infty,$$
where
\begin{equation*}
\widetilde{\mathfrak{M}}_\pm(d,\kappa,c;u,w):=\sup_{\tau\in\pm\mathbb{N}_0}
\begin{cases}
\bigl[u(Q_{0\tau}^{\langle c\rangle })\bigr]^{-\frac{1}{p}}
\bigl[w(Q_{0\tau}^{\langle c\rangle })\bigr]^{\frac{1}{p}}, &d=0,\\
2^{-d\kappa}\bigl[u(Q_{(d-1)\tau}^{\langle c\rangle })\bigr]^{-\frac{1}{p}}
\bigl[w(Q_{(d-1)\tau}^{\langle c\rangle })\bigr]^{\frac{1}{p}}, & d\in\mathbb{N}.
\end{cases}\end{equation*} Besides, $\widetilde{\mathds{C}}\approx 
\widetilde{\mathds{C}}(\kappa_*)$.
\end{theorem}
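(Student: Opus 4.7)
The plan is to mirror the proof of Theorem \ref{S5T1-F}, adjusting for the fact that $f$ vanishes on $(-\infty,c)$ (resp.\ $(c,\infty)$). Concretely, I would work with spline wavelet systems whose reference lattice is shifted to $c$, so that dyadic intervals $Q_{dr}$ are replaced by $Q^{\langle c\rangle}_{dr}$, and then observe that the support condition on $f$ forces the wavelet coefficients $\langle f,\widetilde{\mathbf{\Phi}}^{\langle c\rangle}_\tau\rangle$ and $\langle f,\widetilde{\mathbf{\Psi}}^{\langle 1,1\rangle,\langle c\rangle}_{(d-1)\tau}\rangle$ to vanish for $\tau$ sufficiently negative (resp.\ positive). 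After absorbing a fixed offset from the support radius of $B_{n^*}$ into the implicit constants, this collapses the summations in the decomposition of $\|I_{c^+}^{\boldsymbol{\alpha}} f\|_{B^{s,u}_{pq}(\mathbb{R})}$ to $\tau\in\mathbb{N}_0$ and matches the index ranges in $\widetilde{\mathbf{M}}^{\boldsymbol{\alpha}}_\pm$ and $\widetilde{\mathfrak{M}}_\pm$.

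For sufficiency of (i), I would apply Theorem \ref{main*} to $I_{c^+}^{\boldsymbol{\alpha}} f$ with the $c$-shifted system to obtain an equivalence of the form \eqref{rysha+U}. Then, as in the proof of Theorem \ref{S5T1-F}, I would expand $\widetilde{\mathbf{\Phi}}^{\langle c\rangle}_\tau$ as $\sum_{r\ge 0} A_r R(\tau-r)$ with $R(\tau-r)=(-1)^{\boldsymbol{\alpha}} B^{(\boldsymbol{\alpha})}_{m^*}(\cdot-\tau+r+\boldsymbol{\alpha})$, integrate by parts via \eqref{18-12-1} using $D^{\boldsymbol{\alpha}} I_{c^+}^{\boldsymbol{\alpha}} f = f$, and bound the resulting discrete Hardy-type sum on $\tau\ge 0$ by $\widetilde{\mathbf{M}}^{\boldsymbol{\alpha}}_+(0,c)+\widetilde{\mathbf{M}}^{\boldsymbol{\alpha}}_+(1,c)$. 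For the dyadic levels $d\ge 1$, the Leibniz identity \eqref{dopka24} combined with the Zhu--Vandermonde identity \eqref{ChVan} rewrites $\langle I_{c^+}^{\boldsymbol{\alpha}} f,\widetilde{\mathbf{\Psi}}^{\langle 1,1\rangle,\langle c\rangle}_{(d-1)\tau}\rangle$ as $2^{-d\boldsymbol{\alpha}}\langle f,\bar{\mathbf{\Psi}}^{\langle 1,0\rangle,\langle c\rangle}_{(d-1)\tau}\rangle$ up to spline-basis constants; the factor $2^{-d\kappa^*}[v(Q^{\langle c\rangle}_{(d-1)\tau})]^{-1/p}[u(Q^{\langle c\rangle}_{(d-1)\tau})]^{1/p}\le\sup_{d}\widetilde{\mathfrak{M}}_+(d,\kappa^*,c;v,u)$ absorbs the scaling, and Theorem \ref{main} then identifies the remaining sum with $\|f\|_{B^{s+\kappa^*-\boldsymbol{\alpha},v}_{pq}(\mathbb{R})}$.

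For necessity of (i), I would transcribe the test-function strategy of Theorem \ref{S5T1-F} to the half-line setting. To capture $\widetilde{\mathbf{M}}^{\boldsymbol{\alpha}}_+(0,c)$ I would use
\[
f^*_R(y)=\sum_{0\le\tau\le\nu}(\nu-\tau+1)^{(\boldsymbol{\alpha}-1)(p'-1)}[v(Q^{\langle c\rangle}_{0\tau})]^{1-p'} B_{m^*}(y-c-\tau+\boldsymbol{\alpha}),
\]
whose support lies in $[c,\infty)$ for $\nu\ge 0$, and pass $R\to\infty$ after restricting to the $\mathfrak{B}_{m^*}$-class; the dual construction handles $\widetilde{\mathbf{M}}^{\boldsymbol{\alpha}}_+(1,c)$. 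For the dyadic characteristic, I would plug the single-wavelet test function $h^*_{\tau_0}(y)=\mathbf{\Lambda}^{-1}_{m^*}\mathbf{\Psi}_{m^*,\bar{\boldsymbol{a}},\bar{\boldsymbol{s}};n^*(1),\boldsymbol{\alpha}(1)}(2^{d_0-1}(y-c)-\tau_0)$ with $\tau_0\in\mathbb{N}_0$ and $\bar{\boldsymbol{s}}=-1-m^*-4n^*$, so that the supports of the relevant $B_{n^*}$ pieces interact only through a single $\mathbf{B}_{n^*}$-intersection as in \eqref{18-12-2}, obtaining \eqref{18-12-3} with the shifted cubes. Sufficiency in (ii) follows from Theorem \ref{S6T1}(iii) and the weighted-Besov embedding results cited in the proof of Theorem \ref{S5T1-F}(ii), while necessity in (ii) reuses $h_{*,\tau_0}=h^*_{\tau_0}$ (with $\tau_0\in\mathbb{N}_0$) exactly as in the derivation of \eqref{18-12-4}.

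The main obstacle is bookkeeping of the index ranges and of the spline supports near the boundary point $c$. I expect the non-automatic checks to be: first, that the ``wrong-side'' coefficients really vanish, which requires controlling how far the supports of $B_{n^*,\tau+c}$ and the wavelets extend below $c$ and absorbing this finite offset into the suppressed constants; second, that the test functions $f^*_R$ and $h^*_{\tau_0}$ used in the lower bounds genuinely belong to $B^{s+\kappa^*-\boldsymbol{\alpha},v}_{pq}(\mathbb{R})$ with the semi-orthogonality computations of \eqref{sm} and \eqref{12-11-2} still going through after the shift. Since $\mathscr{A}_\infty^{\rm loc}$ is translation-invariant and Lemma \ref{basa} applies uniformly, no genuinely new weight-theoretic input is needed—only a careful indexing of the cubes $Q^{\langle c\rangle}_{dr}$.
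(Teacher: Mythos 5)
Your proposal matches the paper's intended route: the paper itself gives no independent argument for Theorem \ref{S6T1-F}, deferring to \cite[\S\,6]{Ujms} on the understanding that one transcribes the proof of Theorem \ref{S5T1-F} to the shifted lattice $Q^{\langle c\rangle}_{d\tau}$ with $\tau\in\pm\mathbb{N}_0$, which is exactly what you do, including the correct test functions for the necessity parts. One small bookkeeping point: as written, the $\tau=0$ term of your $f^*_R$ is supported on $[c-\boldsymbol{\alpha},\,c+m^*+1-\boldsymbol{\alpha}]$ and so does not satisfy $f\equiv 0$ on $(-\infty,c)$; you need to start the sum at $\tau=\boldsymbol{\alpha}$ (or drop the $-\boldsymbol{\alpha}$ shift) and absorb the finitely many boundary terms via \eqref{dpI'} and Lemma \ref{basa} — precisely the finite-offset issue you already flagged.
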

\begin{proof}
Details of the proof can be found in \cite[\S\,6]{Ujms}. 
\end{proof}

\begin{remark}{\rm
The characteristics found in Theorem \ref{S5T1} (i) differ from those presented in Theorem
\ref{S5T1-F} {\rm (i)} by components supremal in $d\in\mathbb{N}$, namely,
$$\sup_{d\in\mathbb{N}}\Bigl[\mathscr{M}_\pm^{\boldsymbol{\alpha}}(d,\kappa^*,1)
+\mathscr{M}_\pm^{\boldsymbol{\alpha}}(d,\kappa^*,0)\Bigr]\ge
\sup_{d\in\mathbb{N}}\mathfrak{M}(d,\kappa^*;v,u).$$
The characteristics obtained in Part {\rm (ii)} of Theorem \ref{S5T1} are also redundant
compared to the new ones established in Theorem \ref{S5T1-F}{\rm (ii)}: namely,
$\mathbb{C}_{\pm}^{\boldsymbol{\alpha}}(\kappa_*)\ge \mathds{C}(\kappa_*).$

The same is true relatively to the results of Theorems \ref{S6T1} and \ref{S6T1-F}}.
\end{remark}

At the end of Section, let us give an example.

\begin{example}\label{ex1}{\rm
Let $u(x)=w(x)=(1+|x|)^{-t}$ and $v(x)=(1+|x|)^{-t+sp}$ with $t>
\max\{1,p\boldsymbol{\alpha}-(p-1)\}$ and 
$\min\{1,\boldsymbol{\alpha}\}<\boldsymbol{\alpha}\le s$. For the operator 
$I_{0^+}^{\boldsymbol{\alpha}}$ from ${B}_{pq}^{s-\boldsymbol{\alpha},v}(\mathbb{R})$ 
to ${B}_{pq}^{s,u}(\mathbb{R})$ we have
$$\widetilde{\mathbf{M}}_+^{\boldsymbol{\alpha}}(0,0)\lesssim \begin{cases}
\sup_{\tau\in\mathbb{N}_0}(1+\tau)^{-\frac{t}{p}+\boldsymbol{\alpha}}, &
\boldsymbol{\alpha}p\le t< sp\\
\sup_{\tau\in\mathbb{N}_0}(1+\tau)^{-s+1}, & t\ge sp
\end{cases}$$ 
and
$$
\widetilde{\mathbf{M}}_+^{\boldsymbol{\alpha}}(1,0)\lesssim \begin{cases}
\sup_{\tau\in\mathbb{N}_0}(1+\tau)^{-\frac{t}{p}+\boldsymbol{\alpha}-\frac{1}{p'}}, &t< sp\\
\sup_{\tau\in\mathbb{N}_0}(1+\tau)^{-s+\boldsymbol{\alpha}}, & t\ge sp
\end{cases},$$ that is the functionals $\widetilde{\mathbf{M}}_+^{\boldsymbol{\alpha}}(i,0)$,
$i=0,1$, are finite. The condition
$$\widetilde{\mathfrak{M}}_+(d,0,0;v,u)<\infty$$
is also satisfied and $\widetilde{\mathfrak{M}}_+(d,0,0;u,w)<\infty$ (see also 
\cite{AM}). Therefore, under the conditions assumed and by virtue of Theorem \ref{S6T1-F}, 
the operator $I_{0^+}^{\boldsymbol{\alpha}}$ is bounded from
${B}_{pq}^{s-\boldsymbol{\alpha},v}(\mathbb{R})$ to ${B}_{pq}^{s,u}(\mathbb{R})$, besides, 
the ${{B}_{pq}^{s,u}(\mathbb{R})}\to {{B}_{pq}^{s-\boldsymbol{\alpha},w}(\mathbb{R})}$ 
norm inequality of the form \eqref{4T1more-_F'} holds.

For $I_{+}^{\boldsymbol{\alpha}}$, assume that $t+(p-1)+p(\boldsymbol{\alpha}-1)<sp$ 
and $\boldsymbol{\alpha}p<t$. Then
$${\mathbf{M}}_+^{\boldsymbol{\alpha}}(1)\lesssim \sup_{\tau\in\mathbb{N}}(1+\tau)^{-\frac{t}{p}+\boldsymbol{\alpha}-\frac{1}{p'}}
+\sup_{\tau\in-\mathbb{N}_0}(1-\tau)^{t/p-s+\boldsymbol{\alpha}-1/p}<\infty,$$
$${\mathbf{M}}_+^{\boldsymbol{\alpha}}(0)\lesssim \sup_{\tau\in\mathbb{N}}(1+\tau)^{-\frac{t}{p}+\boldsymbol{\alpha}}
+\sup_{\tau\in-\mathbb{N}_0}(1-\tau)^0<\infty.$$ Moreover,
${\mathfrak{M}}(d,0;v,u)<\infty$ and
${\mathfrak{M}}(d,0;u,w)<\infty$ for all $d\in\mathbb{N}_0$. Hence and by Theorem
\ref{S5T1-F},
the operator $I_{+}^{\boldsymbol{\alpha}}$ is bounded from
${B}_{pq}^{s-\boldsymbol{\alpha},v}(\mathbb{R})$ to ${B}_{pq}^{s,u}(\mathbb{R})$ and
the inequality \eqref{4T1more-_F} holds with $\kappa_*=0$.}
\end{example}

For the estimates in Example \ref{ex1}, the <<power rule>> for sums was applied 
(see \cite[Lemma 1]{BenGr2}).

\section{Estimates under additional conditions on weights}
 
To simplify the quantitative characteristics obtained in Theorems \ref{S5T1-F} and
\ref{S6T1-F}, we can consider them on certain subclasses of weight functions.

\subsection{Weights with homogeneous anti--derivatives}

Parameters $\kappa_*$ and $\kappa^*$ in the definitions of function spaces
${B}_{pq}^{s+\kappa^*-\boldsymbol{\alpha},v}(\mathbb{R})$ and
${{B}_{pq}^{s-\kappa_*-\boldsymbol{\alpha},w}(\mathbb{R})}$ (see Theorems \ref{S5T1-F} and
\ref{S6T1-F}) depend on weight functions $v$ and $w$, as well as on weight $u$ of 
${B}_{pq}^{s,u}(\mathbb{R})$. That is, their values display themselves for specific 
$u$, $v$, and $w$ only.
In general case, $\kappa_*$ and $\kappa^*$
can be refined by narrowing the functionals $\sup_{d\in\mathbb{N}}\mathfrak{M}(d,\kappa^*;v,u)$
and $\sup_{d\in\mathbb{N}_0}\mathfrak{M}(d,\kappa_*;u,w)$ from Theorem \ref{S5T1-F}, 
as well as
$\sup_{d\in\mathbb{N}}\widetilde{\mathfrak{M}}_\pm(d,\kappa^*,c;v,u)$ and
$\sup_{d\in\mathbb{N}_0}\widetilde{\mathfrak{M}}_\pm(d,\kappa_*,c;u,w)$
to subclasses of weights.

Consider the collection of positively homogeneous weights.
Recall that a function $h\colon\mathbb{R}\to\mathbb{R}$ is positively homogeneous of degree
$s(h)$
if for any $x\in\mathbb{R}$ from the function's domain and any
$\lambda>0$
$$h(\lambda x)=\lambda^{s_h}\,h(x).$$ 
Rhe number $s_h$ is called the order of homogeneity.

In our situation, parameter $\lambda$ will take values  $2^{-d}$, $d\in\mathbb{N}$.
Given that the anti--derivative of each weight $\sigma_i$, $i=1,2$,
is a homogeneous function $\Sigma_i$ of degree $s_{\Sigma_i}$, one can give the functional
$\widetilde{\mathfrak{M}}_\pm(d,\kappa_*,c;\sigma_1,\sigma_2)$
a more transparent form in the case of $d\in\mathbb{N}$. Namely,
$$\widetilde{\mathfrak{M}}_\pm(d,\kappa,c;\sigma_1,\sigma_2)=
2^{-d\kappa}2^{\frac{d-1}{p}(s_{\Sigma_1}-s_{\Sigma_2})}
\widetilde{\mathfrak{M}}_\pm(0,\kappa,c;\sigma_1,\sigma_2).$$
Assuming $\kappa=(s_{\Sigma_1}-s_{\Sigma_2})/p$, we obtain that
$$\sup_{d\in\mathbb{N}_0}\widetilde{\mathfrak{M}}_\pm(d,\kappa,c;\sigma_1,\sigma_2)\simeq
\widetilde{\mathfrak{M}}_\pm(0,\kappa,c;\sigma_1,\sigma_2).$$
Examples are $\Sigma_i(x)\simeq |x|^\zeta$ на $|x|\ge |c|$ for 
$I_{c^+}^{\boldsymbol{\alpha}}$ with $c>0$ ore $I_{c^-}^{\boldsymbol{\alpha}}$ with $c<0$.

To simplify functionals of the type $\mathfrak{M}$, one can also use the direct 
\eqref{dpI'} and/or
inverse \eqref{dpII'} doubling conditions for Muckenhoupt--type functions in combination with 
Lemma \ref{basa}.  

\subsection{Using an average type condition}

The functionals $\mathbf{M}_\pm^{\boldsymbol{\alpha}}(\varepsilon)$,
as well as $\widetilde{\mathbf{M}}_\pm^{\boldsymbol{\alpha}}(\varepsilon,c)$, $\varepsilon=0,1$,
in assertions (i) of Theorems \ref{S5T1-F} and
\ref{S6T1-F} can be reduced to integral form if weights $\sigma$ involved  satisfy
the condition
\begin{equation}\label{usl}
\int_r^{r+1}\sigma\approx \sigma(\tau)\qquad \textrm{for some}\quad\tau\in[r,r+1]
\end{equation}
(see \cite{SMJ,UUaa}). The result when $\sigma=u$ and $\sigma=v$ are of the type 
\eqref{usl} is as follows:
\begin{gather*}
\mathbf{M}_\pm^{\theta}(\varepsilon)\approx
\sup_{\tau\in\mathbb{Z}}
\biggl(\int_{\tau}^\infty(x-\tau+1)^{p(\theta-1)\varepsilon}
u(x)\,dx\biggr)^{\frac{1}{p}}\biggl(\int_{-\infty}^\tau
(\tau-y+1)^{p'(\theta-1)(1-\varepsilon)}
[v(y)]^{1-p'}dy\biggr)^{\frac{1}{p'}},\\
\mathbf{M}_{-}^{\theta}(\varepsilon)\approx\sup_{\tau\in\mathbb{Z}}
\biggl(\int_{-\infty}^\tau (\tau-x+1)^{p(\theta-1)\varepsilon}
u(x)\,dx\biggr)^{\frac{1}{p}}\biggl(\int_{\tau}^\infty 
(y-\tau+1)^{p'(\theta-1)(1-\varepsilon)}
[v(y)]^{1-p'}dy\biggr)^{\frac{1}{p'}},\end{gather*}
\begin{align*}
\widetilde{\mathbf{M}}_\pm^{\theta}(\varepsilon)\approx&
\sup_{\tau\in\mathbb{N}_0}
\biggl(\int_{\tau+c}^\infty(x-\tau-c+1)^{p(\theta-1)\varepsilon}
u(x)\,dx\biggr)^{\frac{1}{p}}\\&\times\biggl(\int_{c}^{\tau+c}
(\tau+c-y+1)^{p'(\theta-1)(1-\varepsilon)}
[v(y)]^{1-p'}dy\biggr)^{\frac{1}{p'}},\end{align*}
\begin{align*}
\mathbf{M}_{-}^{\theta}(\varepsilon)\approx&\sup_{\tau\in-\mathbb{N}_0}
\biggl(\int_{-\infty}^{\tau+c} (\tau+c-x+1)^{p(\theta-1)\varepsilon}
u(x)\,dx\biggr)^{\frac{1}{p}}\\&\times \biggl(\int_{\tau+c}^c 
(y-\tau-c+1)^{p'(\theta-1)(1-\varepsilon)}
[v(y)]^{1-p'}dy\biggr)^{\frac{1}{p'}}.
\end{align*}


\section{Appendix}\label{dopol}
Here we give precise formulations of elements of the Battle--Lemari\'{e} type wavelet systems, 
in terms of which theorems of decomposition of Besov spaces
$B_{pq}^{s,w}(\mathbb{R})$ are formulated in \S\,\ref{decomp}.

We start by defining a basic spline of order $n\in\mathbb{N}_0$: namely,
$B_0=\chi_{[0,1)}$ and
\begin{equation*}
B_n(x):=(B_{n-1}\ast B_0)(x)=\int_0^1 B_{n-1}(x-t)\,dt.
\end{equation*}  The key property of $B_n$ in our case is the property of its derivative 
$B^{(k)}_{n}$ of order $k$:
\begin{equation}\label{diff}
B^{(k)}_{n}(\cdot)=\sum_{l=0}^k \binom {k}{l}B_{n-k}(\cdot-l)
\quad\quad (n\ge k\in\mathbb{N}).\end{equation}
Other properties of $B_n$ can be seen in \cite{Chui} and, for example, in \cite{Ujms}.

The $B_n$ is a scaling function, i.e. it generates in $L^2(\mathbb{R})$ 
the so--called multi--resolution analysis $KMA_{B_n}$ in
the form of expanding subspaces $V_d\subset V_{d+1}$, $d\in\mathbb{N}_0$, such that
$\mathrm{clos}_{L^2(\mathbb{R})}\Bigl(\bigcup_{d\in\mathbb{Z}} V_d\Bigr)=L^2(\mathbb{R})$ and
$\bigcap_{d\in\mathbb{Z}} V_d=\{0\}$. Moreover, for each $d\in\mathbb{N}_0$ the system
$$\bigl\{B_n(2^d\cdot-\tau)\bigr\}_{\tau\in\mathbb{Z}}$$ of integer shifts of 
dyadic dilations of B--spline $B_n$, which has compact support $[0,n+1]$,
forms a non--orthogonal Riesz basis in
$V_d\subset L^2(\mathbb{R})$.

Let $W_d$ denote the orthogonal complement of $V_d$ to $V_{d+1}$, i.e.
$V_{d+1}=V_d\oplus W_d$ for all $d\in\mathbb{N}$. Then $W_i\perp W_j$ for $i\not= j$.
From this, in particular, we obtain that 
$$L^2(\mathbb{R})=
V_{0}\oplus\Bigl(\bigoplus_{l\ge0} W_l\Bigr).$$ The spaces $W_d$, $d\in\mathbb{Z}$,
are called wavelet spaces. Just as the spaces $V_d$ are generated by integer
shifts of dyadic dilations
of the scaling function $B_n$, the spaces $W_d$ are generated by basis functions--wavelets,
constructed from
$B_n$. The multiresolution analysis $KMA_{\phi}$ with $\phi=B_n$ allows one to construct 
a wavelet Riesz basis in $L^2(\mathbb{R})$ using integer shifts of $B_n$ and the same type of 
shifts of dyadic dilations of the corresponding to
$B_n$ wavelets.
Within the framework of $KMA_{B_n}$, one can move from the scaling function $B_n$ to another, 
orthonormal one, satisfying the relation
\begin{equation}\label{phi_nn}
\hat{\phi}_n(\omega)=
\frac{\beta_n\ \hat{B}_n(\omega)}{(\mathrm{e}^{i\omega}r_1+1)\ldots
(\mathrm{e}^{i\omega}r_n+1)}=\beta_n\sum_{l_1=0}^\infty (-r_1\,\mathrm{e}^{i\omega})^{l_1}\ldots
\sum_{l_n=0}^\infty (-r_n\,\mathrm{e}^{i\omega})^{l_n}\hat{B}_n(\omega).\end{equation} 
Here $\beta_n:=(r_1+1)\ldots(r_n+1)$, where the sets $\{r_j\}_{j=1^n}\subset(0,1)$ are unique 
for each $n$, and $r_j$ are the roots of the Euler trigonometric polynomial (see, for example, 
\cite{UUjmaa}). The function $\phi_n$ is named after G. Battle \cite{B,B1} and P. G. 
Lemari\'{e}--Rieusset \cite{L}. 
One can write (see e.g. \cite{NovStech, UUjmaa, Ujms})
\begin{equation*}\label{phin}
\phi_n(x)=\beta_n\sum_{l_1=0}^\infty (-r_1)^{l_1}\ldots
\sum_{l_n=0}^\infty (-r_n)^{l_n}B_n(x+l_1\ldots + l_n).\end{equation*}
The Battle--Lemariet wavelets $\psi_n$ corresponding to $\phi_n$ satisfy the condition 
(see \cite{UUjmaa,Ujms})
\begin{align}\label{psihat}\hat{\psi}_{n,\boldsymbol{s}}(\omega)
=&\frac{\gamma_n}{2\mathrm{e}^{i\omega\boldsymbol{s}}}(1/r_1-\mathrm{e}^{-i\omega/2})\ldots
(1/r_n-\mathrm{e}^{-i\omega/2})\,
\sum_{k=0}^{n+1}\frac{(-1)^k(n+1)!}{k!(n+1-k)!}\mathrm{e}^{(n-k)i\omega/2}\nonumber\\
&\times \prod_{j=1}^n\sum_{m_j=0}^\infty (-r_j\,\mathrm{e}^{- i\omega})^{m_j}
\sum_{l_j=0}^\infty (-r_j\,\mathrm{e}^{i\omega/2})^{l_j}\ \hat{B}_n(\omega/2), \quad(\boldsymbol{s}\in\mathbb{Z})
\end{align} where 
${\gamma}_n=2^{-n}{\beta_n(r_1\ldots r_n)}$ and $\boldsymbol{s}$ is fixed.
Their integer shifts form an orthonormal basis in
$W_0\subset L^2(\mathbb{R})$. By analogy with
$\phi_n$, one can write:
\begin{multline*}\label{even}
\psi_{n,\boldsymbol{s}}(\cdot+\boldsymbol{s})={\gamma_n}
\sum_{m_1=0}^\infty(-r_1)^{m_1}\sum_{l_1= 0}^\infty (-r_1)^{l_1}\ldots 
\sum_{m_n= 0}^\infty(-r_n)^{m_n}\sum_{l_n= 0}^\infty(-r_n)^{l_n}\\
\Biggl[\sum_{k=0}^{n+1}\frac{(-1)^{k}(n+1)!}{k!(n+1-k)!}\ 
B_n(2\cdot-k- 2m_1+ l_1-\ldots - 2m_n+ l_n)\\
-\biggl\{\!\sum_{j_1\in\{1,\ldots,n\}}\frac{1}{r_{j_1}}\!\biggr\}
\sum_{k=0}^{n+1}\frac{(-1)^{k}(n+1)!}{k!(n+1-k)!}\ 
B_n(2\cdot -k+1- 2m_1+ l_1-\ldots - 2m_n+ l_n)+\\
\biggl\{\!\sum_{\substack{j_1,j_2\in\{1,\ldots,n\}\\ j_1\not=j_2}}
\!\!\!\frac{1}{r_{j_1}r_{j_2}}\!\biggr\}
\sum_{k=0}^{n+1}\frac{(-1)^{k}(n+1)!}{k!(n+1-k)!}\ 
B_n(2\cdot -k+2- 2m_1+ l_1-\ldots - 2m_n+ l_n)+\ldots\\
\ldots+\biggl\{\!\!\sum_{\substack{j_1,\ldots,j_c\in\{1,\ldots,n\}
\\ j_1\not=\ldots\not=j_c}}\!\!\!\frac{(-1)^c}{r_{j_1}\ldots r_{j_c}}\!\biggr\}
\sum_{k=0}^{n+1}\frac{(-1)^{k}(n+1)!}{k!(n+1-k)!}\ 
B_n(2\cdot-k+c- 2m_1+ l_1-\ldots - 2m_n+ l_n)+\ldots\\ +\ldots+
\frac{(-1)^n}{r_{1}\ldots r_{n}}\ 
\sum_{k=0}^{n+1}\frac{(-1)^{k}(n+1)!}{k!(n+1-k)!}\ 
B_n(2\cdot +(n-k)- 2m_1+ l_1-\ldots- 2m_n+ l_n)\Biggr]
.\end{multline*} 
The Fourier transform of ${\psi}_{n,\boldsymbol{s}}$ in \eqref{psihat} can be represented as
(see details in \cite[(2.31)]{Ujms})
\begin{align*}\hat{\psi}_{n,\boldsymbol{s}}(\omega)
=&\frac{\gamma_n}{2\mathrm{e}^{i\omega\boldsymbol{s}}}\,
\sum_{k=0}^{n+1}\frac{(-1)^k(n+1)!}{k!(n+1-k)!}\mathrm{e}^{(n-k)i\omega/2}\nonumber\\
&\times \prod_{j=1}^n\frac{|1-\mathrm{e}^{i\omega/2}r_j|^2}{r_j}\prod_{j=1}^n\sum_{m_j=0}^\infty (-r_j\,\mathrm{e}^{- i\omega})^{m_j}
\sum_{l_j=0}^\infty (r_j^2\,\mathrm{e}^{i\omega})^{l_j}\ \hat{B}_n(\omega/2).
\end{align*}

The orthonormal wavelet basis in $L^2(\mathbb{R})$ generated by the
pair $\{\phi_n,\psi_{n,\boldsymbol{s}}\}$ is formed as follows: it is
$\sqrt{2}\phi_{n}(x-\tau)$ and
$2^{d-1}\psi_{n,\boldsymbol{s}}(2^{d-1}x-\tau)$ for all
$\tau\in\mathbb{Z}$ and $d\in\mathbb{N}_0$.

Instead of $B_n$ one can take the function $B_{n,\boldsymbol{a}}(\cdot):=
B_n(\cdot-\boldsymbol{a})$ with
$\boldsymbol{a}\in\mathbb{R}$. This would form another $KMA_{B_{n,\boldsymbol{a}}}$ and, 
generally speaking, some other system $\{\phi_{n,\boldsymbol{a}},\psi_{n,\boldsymbol{a},\boldsymbol{s}}\}$,
generating an orthonormal basis of Battle--Lemariez wavelets in $L^2(\mathbb{R})$.

Since $\phi_{n,\boldsymbol{a}}$ and $\psi_{n,\boldsymbol{a},\boldsymbol{s}}$ have unbounded 
supports, in \cite{PSI}, with the aim of effective using spline systems of this type in 
decomposition theorems, a so-called localisation algorithm was proposed. 
It consists in forming, within the framework of
$KMA_{B_{n,\boldsymbol{a}}}$,
new basis elements which are finite linear combinations of
integer shifts of the scaling or wavelet Battle--Lemari\'{e} functions. 

As a localised analogue of $\phi_{n,\boldsymbol{a}}$ we will use the function
$\Phi_{n,\boldsymbol{a}}=B_{n,\boldsymbol{a}}$, which turns out to be a linear
combination of $n+1$
integer shifts of $\phi_{n,\boldsymbol{a}}$. Indeed, on the strength of  \eqref{phi_nn},
\begin{equation}\label{ss22'}
\hat{\phi}_{n,\boldsymbol{a}}(\omega)\prod_{j=1}^n\frac{1}{r_j+1}(\mathrm{e}^{i\omega}r_j+1)
=\hat{B}_{n,\boldsymbol{a}}(\omega)=:\hat\Phi_{n,\boldsymbol{a}}(\omega).\end{equation} 
Therefore, we have 
\begin{equation}\label{phiphi}
\textrm{supp}\,{\Phi}_{n,\boldsymbol{a}}=[\boldsymbol{a},\boldsymbol{a}+n+1]\qquad
\textrm{and}\qquad{\Phi}_{n,\boldsymbol{a}}(\cdot)=
\beta_n^{-1}\sum_{\varkappa=-n}^{0}\boldsymbol{\alpha}'_\varkappa\cdot 
\phi_{n,\boldsymbol{a}}(\cdot-\varkappa),\end{equation} where $\boldsymbol{\alpha}'_\varkappa>0$
for all $\varkappa=-n,\ldots,0$ and
$\sum_{\varkappa=-n}^{0}\boldsymbol{\alpha}'_\varkappa=\beta_n$.
Recall that integer shifts of ${\phi}_{n,\boldsymbol{a}}$ in \eqref{phiphi} are mutually orthogonal,
moreover, $0<r_j<1$ for all
$j=1,\ldots,n$ in \eqref{ss22'}. 

As a localised analogue of $\psi_{n,\boldsymbol{a},\boldsymbol{s}}$, we consider the
function
${\Psi}_{n,\boldsymbol{a},\boldsymbol{s}}$, satisfying the condition
\begin{multline}\label{obraz**}\hat{\Psi}_{n,\boldsymbol{a},\boldsymbol{s}}(\omega)=
\gamma_n^{-1}
\hat{\psi}_{n,\boldsymbol{a},\boldsymbol{s}}(\omega)\prod_{j=1}^n(1+\mathrm{e}^{-i\omega}r_j)
(1-\mathrm{e}^{i\omega}r_j^2)\\=
\frac{\hat{B}_{n,\boldsymbol{a}}(\omega/2)}{2\mathrm{e}^{i\omega\boldsymbol{s}}}
\sum_{k=0}^{n+1}\frac{(-1)^k(n+1)!}{k!(n+1-k)!}\mathrm{e}^{(n-k)i\omega/2}
\prod_{j=1}^n\frac{|1-\mathrm{e}^{i\omega/2}r_j|^2}{r_j}
\\=
\frac{\hat{B}_{n,\boldsymbol{a}}(\omega/2)}{2\mathrm{e}^{i\omega\boldsymbol{s}}}\sum_{k=0}^{n+1}
\frac{(-1)^k(n+1)!}{k!(n+1-k)!}\mathrm{e}^{(n-k)i\omega/2}
\prod_{j=1}^n[\rho_j-2\cos(j\omega/2))],\end{multline}
where $\rho_j:=r_j+1/r_j=2(2\alpha_j-1)$ with $\alpha_j>1$, $j=1,\ldots,n$ (see details in \cite{Ujms}). 
From this it is clear that $\Psi_{n,\boldsymbol{a},\boldsymbol{s}}$ is, on the one hand, 
a linear combination of
$2n+1$ mutually orthogonal integer shifts of ${\psi}_{n,\boldsymbol{s},\boldsymbol{a}}$.
On the other hand, $\Psi_{n,\boldsymbol{a},\boldsymbol{s}}$ is a finite linear combination of
integer and half shifts of $B_{n,\boldsymbol{a}}(2\cdot)$. 
Since all $r_j$ lie inside $(0,1)$,
the system $\bigl\{\Psi_{n,\boldsymbol{a},\boldsymbol{s}}
(\cdot-\tau)\bigr\}_{\tau\in\mathbb{Z}}$
forms a Riesz basis in $W_0(B_{n,\boldsymbol{a}})$. 
Also observe that 
\begin{equation}\label{nosit1}\textrm{supp}\,{\Psi}_{n,\boldsymbol{a},\boldsymbol{s}}=
[\boldsymbol{s}+\boldsymbol{a}-n,\boldsymbol{s}+\boldsymbol{a}+n+1]\quad\textrm{ and}\quad
{\Psi}_{n,\boldsymbol{a},\boldsymbol{s}}(\cdot)=\gamma_n^{-1}\sum_{|\varkappa|\le n}
\boldsymbol{\alpha}''_\varkappa\cdot \psi_{n,\boldsymbol{a},\boldsymbol{s}}(\cdot-\varkappa),\end{equation} 
where
\begin{equation*}
\gamma_n^{-1}\sum_{|\varkappa|\le n}\boldsymbol{\alpha}''_\varkappa=2^n\prod_{j=1}^n
\frac{(1+r_j)(1-r_j^2)}{r_j(1+r_j)}=2^n\prod_{j=1}^n(1/r_j-r_j):=\mathbf{\Lambda}_n>0.
\end{equation*}
 
To formulate decomposition theorems, we will also need functions of the form:
\begin{equation*}\label{vsplesk}\hat{
{\Psi}}_{{n,\boldsymbol{a}},\boldsymbol{s};m(\Bbbk),\boldsymbol{\alpha}(\boldsymbol{\zeta})}(\omega)=
\hat{\Psi}_{n,\boldsymbol{a},\boldsymbol{s}}(\omega)
\biggl[\prod_{j=1}^{m}[\rho_j(m)-2\cos(\omega/2))]\biggr]^{\Bbbk}
\biggl[\sum_{\iota=0}^{2\boldsymbol{\alpha}}\frac{(-1)^\iota (2\boldsymbol{\alpha})!}{\iota!
(2\boldsymbol{\alpha}-\iota)!}\mathrm{e}^{(\boldsymbol{\alpha}-\iota)i\omega/2}
\biggr]^{\boldsymbol{\zeta}},
\end{equation*} where $\Bbbk,\boldsymbol{\zeta}\in\{0,1\}$.
Due to the fact that $r_j(m)[\rho_j(m)-2\cos(\omega/2)]=|1-\mathrm{e}^{i\omega/2}r_j(m)|^ 2$ 
and
$$\sum_{\iota=0}^{2\boldsymbol{\alpha}}\frac{(-1)^\iota (2\boldsymbol{\alpha})!}{\iota!
(2\boldsymbol{\alpha}-\iota)!}\mathrm{e}^{(\boldsymbol{\alpha}-\iota)i\omega/2}=
\mathrm{e}^{\boldsymbol{\ alpha}i\omega/2}(1-\mathrm{e}^{-i\omega/2})^{2\boldsymbol{\alpha}},$$
the element
$ {\Psi}_{{n, \boldsymbol{a}},\boldsymbol{s};m(1),\boldsymbol{\alpha}(\boldsymbol{\zeta})}$,
$\boldsymbol{\zeta}=0,1$, is linear combination of integer and half shifts of 
$\Psi_{n,\boldsymbol{a},\boldsymbol{s}}$. 
Observe that 
\begin{equation}\label{nosit2}\textrm{supp}\,
{\Psi}_{{n,\boldsymbol{a},\boldsymbol{s}};m(\Bbbk),
\boldsymbol{\alpha}(\boldsymbol{\zeta})}
=
[\boldsymbol{s}+\boldsymbol{a}
-n-m\Bbbk/2-\boldsymbol{\alpha}\boldsymbol{\zeta}/2,\boldsymbol{s}+\boldsymbol{a}+n
+1+ m\Bbbk/2+\boldsymbol{\alpha}\boldsymbol{\zeta}/2]\end{equation} and
\begin{equation}\label{ksi}{\Psi}_{{n,\boldsymbol{a},\boldsymbol{s}};m(1),\boldsymbol{\alpha}
(0)}(\cdot)=
\sum_{|\varkappa|\le m}\lambda_{|\varkappa|}(m)(-1)^{|\varkappa|} 
\Psi_{n,\boldsymbol{a},\boldsymbol{s}}(\cdot-\varkappa/2),\end{equation} 
\begin{multline}\label{ksi*}{\Psi}_{{n,\boldsymbol{a},\boldsymbol{s}};m(1),\boldsymbol{\alpha}
(1)}(\cdot)=
\sum_{|\varkappa|\le m}\lambda_{|\varkappa|}(m)(-1)^{|\varkappa|} 
\sum_{\zeta=0}^{2\boldsymbol{\alpha}}\binom{2\boldsymbol{\alpha}}{\zeta}
(-1)^{\zeta}
\Psi_{n,\boldsymbol{a},\boldsymbol{s}}(\cdot-\zeta/2+\boldsymbol{\alpha}/2-\varkappa/2)
\\\quad\textrm{с}\quad \sum_{|\varkappa|\le m}
\lambda_{|\varkappa|}(m)(-1)^{|\varkappa|}=\prod_{j=1}^m
\frac{1-r_j^2(m)}{r_j(m)}=2^{-m}\mathbf{\Lambda}_m,\end{multline}
(see \cite[p. 18]{Ujms}), 
besides,
\begin{equation}\label{lambda}\prod_{j=1}^m\bigl[(r_j+1/r_j)-2\cos(\omega/2)\bigr]=:
\sum_{j=0}^m(-1)^j\lambda_j\cos(j\omega/2),\end{equation}
where $\lambda_m=2$ and $0<\lambda_j=\lambda(\rho_1,\ldots,\rho_m)$ in the case $ j\not=m$, 
and $\lambda_j$ are even for $j\not=0$. For completeness, 
we will also write down that (see \eqref{obraz**})
\begin{equation}\label{formulll}{\mathbf{\Psi}}_{n,\boldsymbol{a},\boldsymbol{s}}(\cdot)=
\gamma_n
\sum_{|s|\le n}\lambda_{|s|}(n)(-1)^{|s|}
\sum_{k= 0}^{n+1}(-1)^k\binom{1+n}{k}\ 
B_{n}(2\cdot+(n-k)-s).\end{equation}

In conclusion of Section, for $\boldsymbol{a}\in\mathbb{R}$ 
we define pairs of functions of the form 
\begin{gather}\label{vazhnoo} \widetilde{\mathbf{\Phi}}=\Phi_{n,\boldsymbol{a}}\qquad 
\textrm{and}\qquad 
\widetilde{\mathbf{\Psi}}^{\langle \Bbbk,\boldsymbol{\zeta}\rangle}=
\Psi_{{n,\boldsymbol{a}},\boldsymbol{s};m(\Bbbk),
\boldsymbol{\alpha}(\boldsymbol{\zeta})}/\mathbf{\Lambda}_{n},
\quad\textrm{where} \quad \Bbbk,\boldsymbol{\zeta}\in\{0,1\};
\\\label{vazhnooW} {\mathbf{\Phi}}^{\langle \boldsymbol{a}\rangle }=
\Phi_{n,\boldsymbol{a}}\qquad \textrm{and}\qquad {\mathbf{\Psi}}^{\langle \boldsymbol{a}
\rangle }=\Psi_{n,\boldsymbol{a},\boldsymbol{s}}/\mathbf{\Lambda}_{n}.\end{gather}

In \eqref{vazhnoo} and \eqref{vazhnooW} the first components $\widetilde{\mathbf{\Phi}}$ and
${\mathbf{\Phi}}^{\langle \boldsymbol{a}\rangle }$ (the scaling functions or the
father wavelets) are identical to each other. 
We denote them differently only to distinguish the wavelet systems 
$\{\widetilde{\mathbf{\Phi}},
\widetilde{\mathbf{\Psi}}^{\langle \Bbbk,\boldsymbol{\zeta}\rangle}\}$ 
and $\{{\mathbf{\Phi}}^{\langle \boldsymbol{a}\rangle }, 
{\mathbf{\Psi}}^{\langle \boldsymbol{a}\rangle }\}$ from each other. 
The difference in the systems is in the second components
$\widetilde{\mathbf{\Psi}}^{\langle \Bbbk,\boldsymbol{\zeta}\rangle}$ and ${\mathbf{\Psi}}^{\langle \boldsymbol{a}
\rangle }$ or the wavelet functions (the mother wavelets). Here
$\widetilde{\mathbf{\Psi}}^{\langle \Bbbk,\boldsymbol{\zeta}\rangle}$ is a more general function, which, however, coincides with ${\mathbf{\Psi}}^{\langle \boldsymbol{a}
\rangle }$ when $\Bbbk=\boldsymbol{\zeta}=0$.

The smoothness of the functions in \eqref{vazhnoo} and \eqref{vazhnooW} is $n$. 
For $x\in\mathbb{R}$ we denote: \begin{equation}\label{ForRepr'}
\widetilde{\mathbf{\Phi}}_{\tau}(x):=\widetilde{\mathbf{\Phi}}(x-\tau)\quad\textrm{and}\quad
\widetilde{\mathbf{\Psi}}_{d\tau}^{\langle \Bbbk,\boldsymbol{\zeta}\rangle}(x):=2^{d/2}
\widetilde{\mathbf{\Psi}}^{\langle \Bbbk,\boldsymbol{\zeta}\rangle}(2^dx-\tau) \quad
(\tau\in\mathbb{Z},\, d\in\mathbb{N}_0),\end{equation} and also,
\begin{equation}\label{ForRepr'W}{\mathbf{\Phi}}_{\tau}^{\langle \boldsymbol{a}\rangle }
(x):={\mathbf{\Phi}}^{\langle \boldsymbol{a}\rangle }(x-\tau)\quad\textrm{and}\quad {\mathbf{\Psi}}_{d\tau}^{\langle \boldsymbol{a}\rangle }(x):=2^{d/2}{\mathbf{\Psi}}^{\langle \boldsymbol{a}\rangle }(2^dx-\tau) \quad
(\tau\in\mathbb{Z},\, d\in\mathbb{N}_0).\end{equation}
As noted in \cite[Remark 2.4]{Ujms}, the construction of ${\Psi}_{{n,\boldsymbol{a},
\boldsymbol{s}} ,m(\Bbbk),\boldsymbol{\alpha}(\boldsymbol{\zeta})}(\cdot-\tau)$,
$\Bbbk,\boldsymbol{\zeta}\in\{0,1\}$, starts with\\ $B_{n,2(\boldsymbol{s}+
\boldsymbol{a}+\tau)}(\cdot)$. As for observations \eqref{nosit1} and \eqref{nosit2}
concerning supports of ${\Psi}_{{n,\boldsymbol{a},\boldsymbol{s}}
,m(\Bbbk),\boldsymbol{\alpha}(\boldsymbol{\zeta})}$, they are slightly corrected in this paper
relatively to the information presented in \cite[\S\,2]{Ujms}.

The systems \eqref{ForRepr'W}, as well as \eqref{ForRepr'}, if they are generated
by elements \eqref{vazhnoo} with $\Bbbk=\boldsymbol{\zeta}=0$, form a semi--orthogonal 
Riesz basis in
$L^2(\mathbb{R} )$ (i.e. elements from $V_0$ are orthogonal to $W_l$ with $l\ge 0$, 
and functions of the space $W_{l_0}$ 
with fixed $l_0\in\mathbb{N}_0$
are orthogonal to $W_l$ if
$l\not=l_0$). The special structure of the elements of such systems allows them to be used
for obtaining lower bounds (or proofs of necessity) in decomposition theorems
(see \S\,\ref{decomp}). Upper bounds in such theorems (or proofs of sufficiency)
are less demanding and allow representation with respect to combined elements
of the type \eqref{ForRepr'}, when they
are generated by functions \eqref{vazhnoo} with $\Bbbk=1$ and/or $\boldsymbol{\zeta}=1$.

\medskip
{\bf Acknowledgment.} 
The research presented in § 4 of the paper was performed at 
 Steklov Mathematical Institute of Russian
Academy of Sciences under support of the RSF grant 24--11--00170, 
https://rscf.ru/project/24-11-00170/. The remaining work was done in framework of 
the state assignments of Ministry of Science and Higher Education of Russia for 
V. A. Trapeznikov Institute of Control Sciences of Russian
Academy of Sciences and Computing Center of Far Eastern Branch of
Russian
Academy of Sciences.

\renewcommand{\refname}{Bibliography}

\end{document}